\documentclass[reqno]{amsart}
\usepackage{amssymb}

\newtheorem{theorem}{Theorem}
\newtheorem{lemma}[theorem]{Lemma}

\theoremstyle{definition}
\newtheorem{definition}[theorem]{Definition}
\newtheorem{setting}{Setting}
\newtheorem{example}[theorem]{Example}

\newtheorem{proposition}[theorem]{Proposition}
\newtheorem{corollary}[theorem]{Corollary}

\newtheorem{notation}{Notation}

\theoremstyle{remark}
\newtheorem{remark}[theorem]{Remark}

\begin{document}

\title[Hartogs companions and holomorphic extensions]{Hartogs companions and holomorphic extensions in arbitrary dimension}
\author{Vlad Timofte}
\address{Institute of Mathematics ``Simion Stoilow'' of the Romanian Academy, P.O. Box 1-764, RO-014700 Bucharest, Romania}
\email{vlad.timofte@imar.ro}

\subjclass[2010]{Primary 32D15; Secondary 46G20; 46E50}
\date{September 7, 2020.}
\keywords{Several complex variables; Holomorphic extension; Kugelsatz; Hartogs companion; G\^ateaux holomorphy; finite open topology.}

\begin{abstract}
We show that every holomorphic map $f\in\mathcal{H}(\Omega\setminus K)$ ($K\subset\Omega\subset\mathbb{C}^n$, with $K$ compact, $\Omega$ open, and $n\ge2$), has a unique ``\emph{Hartogs companion}'' $\tilde f\in\mathcal{H}(\Omega)$ matching $f$ on an open subset $C_{K,\Omega}\subset\Omega\setminus K$. Furthermore, $\tilde f$ extends $f$, \emph{if and only if} $\mathbb{C}^n\setminus K$ is a connected set; this equivalence proves the converse implication from the Hartogs Kugelsatz. The existence of vector-valued Hartogs companions in any dimension yields a Hartogs-type extension theorem for G\^ateaux holomorphic maps $f\in\mathcal{H}_\mathrm{G}(\Omega\setminus K,Y)$ on finitely open sets in arbitrary complex vector spaces. The equivalence is very similar to that for $K\subset\Omega\subset\mathbb{C}^n$ and leads to a corresponding Hartogs Kugelsatz in arbitrary dimension and to extension theorems for five types of holomorphy (G\^ateaux, Mackey/Silva, hypoanalytic, Fr\'echet, locally bounded). We also show that the range $\tilde f(\Omega)$ of a vector-valued Hartogs companion cannot leave a domain of holomorphy containing $f(\Omega\setminus K)$. We establish a boundary principle for maps $f\in\mathcal{H}_\mathrm{G}(\Omega,Y)\cap\mathcal{C}(\overline\Omega,Y)$ on finitely bounded open sets. For $Y=\mathbb{C}$, the principle states that $f\big(\overline\Omega\big)=f(\partial\Omega)$ (hence $\sup_{x\in\Omega}|f(x)|=\sup_{x\in\partial\Omega}|f(x)|$). Several results require a new identity theorem, which yields a maximum norm principle and a ``max-min'' seminorm principle.
\end{abstract}

\maketitle

\section{Introduction}\label{s.introduction}

The famous Hartogs extension theorem is a striking and deep result emphasizing the difference between the theory of holomorphic functions of one and of several complex variables:

\begin{theorem}[Hartogs extension]\label{t.Hartogs}
Let $n\ge2$, an open set $\Omega\subset\mathbb{C}^n$, and a compact subset $K\subset\Omega$. If $\Omega\setminus K$ is connected, then every map $f\in\mathcal{H}(\Omega\setminus K)$ has a unique extension $\tilde f\in\mathcal{H}(\Omega)$.
\end{theorem}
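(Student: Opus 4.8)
The plan is to derive uniqueness from the identity theorem and existence from the Ehrenpreis $\bar\partial$-argument. For uniqueness, first note that the hypotheses already force $\Omega$ to be connected: no connected component $\Omega_i$ of $\Omega$ can be contained in $K$ (such an $\Omega_i$ would be a nonempty, bounded, clopen subset of $\mathbb{C}^n$, since $\bar\Omega_i\subseteq\bar K=K\subseteq\Omega$ forces $\partial\Omega_i=\emptyset$), so each $\Omega_i$ meets the connected set $\Omega\setminus K$, and, $\Omega\setminus K$ being connected, there can be only one such component. Hence if $g_1,g_2\in\mathcal{H}(\Omega)$ both restrict to $f$ on $\Omega\setminus K$, then $g_1-g_2\in\mathcal{H}(\Omega)$ vanishes on the nonempty open set $\Omega\setminus K$ and therefore on all of $\Omega$.

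For existence I would proceed as follows. Fix $\chi\in\mathcal{C}_c^\infty(\mathbb{C}^n)$ with $\chi\equiv1$ near $K$ and with $\mathrm{supp}\,\chi$ a small compact neighborhood of $K$ inside $\Omega$ (how small will be decided only at the very end). Set $f_0:=(1-\chi)f$ on $\Omega\setminus K$, extended by $0$ across $K$; then $f_0\in\mathcal{C}^\infty(\Omega)$, $f_0=f$ off $\mathrm{supp}\,\chi$, and $v:=\bar\partial f_0=-f\,\bar\partial\chi$ is a smooth $\bar\partial$-closed $(0,1)$-form supported in $\mathrm{supp}\,\chi$, hence it extends by $0$ to a compactly supported $\bar\partial$-closed $(0,1)$-form on $\mathbb{C}^n$. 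Now solve $\bar\partial u=v$ by the one-variable Cauchy transform in the first coordinate,
\[
u(z)=\frac{1}{2\pi i}\int_{\mathbb{C}}\frac{v_1(\zeta,z_2,\dots,z_n)}{\zeta-z_1}\,d\zeta\wedge d\bar\zeta,
\]
a smooth function on $\mathbb{C}^n$ satisfying $\bar\partial u=v$ (the $\bar z_1$-derivative by Cauchy--Pompeiu in the first variable, the remaining ones using $\bar\partial v=0$). Then $\tilde f:=f_0-u\in\mathcal{H}(\Omega)$ and $\tilde f=f$ on $(\Omega\setminus\mathrm{supp}\,\chi)\cap\{u=0\}$.

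The heart of the matter, and the only place where $n\ge2$ is used, is that $u$ vanishes on an entire open set. Indeed, whenever $(z_2,\dots,z_n)$ lies off the compact projection of $\mathrm{supp}\,v$ to the last $n-1$ coordinates the integrand vanishes identically, so $u\equiv0$ there; for $n\ge2$ the set $\{z:|(z_2,\dots,z_n)|>R\}$ with $R$ large is connected, unbounded and disjoint from $\mathrm{supp}\,v$, hence contained in the unbounded component $A$ of $\mathbb{C}^n\setminus\mathrm{supp}\,v$, and since $u$ is holomorphic off $\mathrm{supp}\,v$ the identity theorem on $A$ yields $u\equiv0$ on all of $A$. (For $n=1$ there is no such open set, which is precisely the one-variable obstruction.) Consequently $\tilde f=f$ on $(\Omega\setminus\mathrm{supp}\,\chi)\cap A$.

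It remains to make this agreement set nonempty and to conclude. Here one uses that $\Omega\setminus K$ lies in the unbounded component $L$ of $\mathbb{C}^n\setminus K$: were $\Omega\setminus K$ (which is connected) contained in a bounded component $D$, then $\partial D\subseteq K\subseteq\Omega$ and $K\cup D=\Omega\cup D$ would be a nonempty, bounded, clopen subset of $\mathbb{C}^n$, which is absurd. So pick $p\in\Omega\setminus K\subseteq L$; since $L$ is open, connected and unbounded, join $p$ inside $L$ to some point of large norm by a path $\gamma$, and only now choose $\chi$ supported so near $K$ that $\mathrm{supp}\,\chi$ misses both $\gamma$ and an outward ray from the far endpoint of $\gamma$. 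Then $p$ is joined to infinity within $\mathbb{C}^n\setminus\mathrm{supp}\,v$, so $p\in(\Omega\setminus\mathrm{supp}\,\chi)\cap A$, whence $\tilde f=f$ on a neighborhood of $p$; since $\tilde f-f\in\mathcal{H}(\Omega\setminus K)$ and $\Omega\setminus K$ is connected, the identity theorem gives $\tilde f=f$ throughout $\Omega\setminus K$. I expect the genuine obstacle to be exactly the solvability of $\bar\partial u=v$ with a solution that vanishes identically at infinity — the essential higher-dimensional input — whereas the topological bookkeeping that keeps the agreement set nonempty is a secondary, more routine, point. (In the language of the present paper this $\tilde f$ is a ``Hartogs companion'' of $f$, canonically pinned down on an intrinsic open set $C_{K,\Omega}\subseteq\Omega\setminus K$; the last step above is the verification that it actually extends $f$ once $\Omega\setminus K$ — equivalently here $\mathbb{C}^n\setminus K$ — is connected.)
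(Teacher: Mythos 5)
Your proposal is correct and follows essentially the same route as the paper: the existence part is exactly the Ehrenpreis--H\"ormander $\bar\partial$ construction (cutoff $\chi$, Cauchy transform in the first variable, vanishing of the correction term on the unbounded component of the complement of the support, then the identity theorem), which is precisely the construction the paper uses to build the Hartogs companion in Theorem~\ref{t.C-companion}(a). The only difference is organizational: the paper packages the agreement set as the coincidence set $C_{K,\Omega}$ and deduces Theorem~\ref{t.Hartogs} via the connectedness bookkeeping of Theorem~\ref{t.C-companion}(d) and Proposition~\ref{p.connectedness}, whereas you obtain agreement at a single suitably chosen point and spread it over the connected set $\Omega\setminus K$ by the identity theorem --- a streamlined special case of the same argument.
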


A different version of this theorem is the following:

\begin{theorem}[Hartogs Kugelsatz]\label{t.Kugelsatz}
Let $n\ge2$, an open set $\Omega\subset\mathbb{C}^n$, and a compact subset $K\subset\Omega$. If $\mathbb{C}^n\setminus K$ is connected, then the restriction
\[\rho:\mathcal{H}(\Omega)\rightarrow\mathcal{H}(\Omega\setminus K),\qquad\rho(f)=f|_{\Omega\setminus K},\]
is an isomorphism of $\mathbb{C}$-algebras.
\end{theorem}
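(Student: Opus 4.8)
The plan is to deduce Theorem~\ref{t.Kugelsatz} from the Hartogs extension theorem (Theorem~\ref{t.Hartogs}). That $\rho$ is a homomorphism of $\mathbb{C}$-algebras is clear, since restriction commutes with pointwise sums, products and scalars and fixes the constant $1$; so the whole content is that $\rho$ is bijective, after which its inverse is automatically an algebra homomorphism as well. For injectivity, suppose $g\in\mathcal{H}(\Omega)$ with $\rho(g)=0$, i.e. $g\equiv0$ on $\Omega\setminus K$. No connected component $\Omega_i$ of $\Omega$ can be contained in $K$: being a component of an open set, $\Omega_i$ is clopen in $\Omega$ and hence has empty boundary relative to $\Omega$, so $\overline{\Omega_i}\subset K\subset\Omega$ would make $\Omega_i$ clopen and bounded in $\mathbb{C}^n$, which is absurd. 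Thus $\Omega_i\setminus K$ is a nonempty open subset of $\Omega_i$ on which $g$ vanishes, and the identity theorem yields $g\equiv0$ on each $\Omega_i$, whence $g=0$. So $\rho$ is injective.

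Surjectivity is where the two hypotheses must be reconciled: Theorem~\ref{t.Hartogs} requires $\Omega\setminus K$ connected, whereas here we are only given that $\mathbb{C}^n\setminus K$ is connected, and this bridging is the step I expect to be the main obstacle. The key is a topological lemma: \emph{if $\mathbb{C}^n\setminus K$ is connected, then $\Omega_i\setminus K$ is connected for every component $\Omega_i$ of $\Omega$.} I would prove it by Alexander/Poincar\'e duality. Since $\Omega_i$ is clopen in $\Omega$ and $K\subset\Omega$, the set $K_i:=K\cap\Omega_i$ is clopen in $K$, hence compact, and $\mathbb{C}^n\setminus K$ connected gives $\check H^{2n-1}(K;\mathbb{Z})=0$ (Alexander duality in $S^{2n}$), whence $\check H^{2n-1}(K_i;\mathbb{Z})=0$ as a direct summand. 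Feeding this, together with $\check H^{2n}(K_i)=0$ (a proper compact subset of a connected non-compact $2n$-manifold), and the duality isomorphism $H_k(\Omega_i,\Omega_i\setminus K_i;\mathbb{Z})\cong\check H^{2n-k}(K_i;\mathbb{Z})$ into the long exact sequence of the pair $(\Omega_i,\Omega_i\setminus K_i)$, the sequence collapses to $H_0(\Omega_i\setminus K)=H_0(\Omega_i\setminus K_i)\cong H_0(\Omega_i)=\mathbb{Z}$, i.e. $\Omega_i\setminus K$ is connected. (An alternative route that avoids Theorem~\ref{t.Hartogs} entirely: with $\chi\in C_c^\infty(\Omega)$ equal to $1$ near $K$, the $(0,1)$-form $-f\,\bar\partial\chi$ extends to a compactly supported $\bar\partial$-closed form on $\mathbb{C}^n$, which for $n\ge2$ has a compactly supported primitive $u$; then $\tilde f:=(1-\chi)f-u\in\mathcal{H}(\Omega)$ agrees with $f$ near $\partial\Omega$, and connectedness of $\mathbb{C}^n\setminus K$ again — via the same lemma — propagates the equality to all of $\Omega\setminus K$.)

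With the lemma in hand, fix $f\in\mathcal{H}(\Omega\setminus K)$. On each component $\Omega_i$ the compact set $K_i\subset\Omega_i$ satisfies that $\Omega_i\setminus K_i=\Omega_i\setminus K$ is connected and $f|_{\Omega_i\setminus K}\in\mathcal{H}(\Omega_i\setminus K)$, so Theorem~\ref{t.Hartogs} supplies a unique $\tilde f_i\in\mathcal{H}(\Omega_i)$ with $\tilde f_i|_{\Omega_i\setminus K}=f|_{\Omega_i\setminus K}$ (when $K_i=\varnothing$, take $\tilde f_i=f|_{\Omega_i}$). Since the $\Omega_i$ are open and pairwise disjoint, the $\tilde f_i$ glue to a single $\tilde f\in\mathcal{H}(\Omega)$, and $\rho(\tilde f)=f$ as equality holds on $\bigcup_i(\Omega_i\setminus K)=\Omega\setminus K$. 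Hence $\rho$ is surjective; being a bijective homomorphism of $\mathbb{C}$-algebras, it is an isomorphism.
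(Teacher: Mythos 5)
Your proof is correct, but it takes a genuinely different route from the paper's. The paper does not deduce the Kugelsatz from Theorem~\ref{t.Hartogs}; instead both are absorbed into Theorem~\ref{t.C-companion}, whose proof constructs the companion $\tilde f$ directly by the Ehrenpreis $\bar\partial$ method (your parenthetical ``alternative route'' is exactly this construction) and then, in part (d), shows that the coincidence set is all of $\Omega\setminus K$ once $\mathbb{C}^n\setminus K$ is connected. The topological input there is weaker than your bridging lemma: the paper only needs that each $\mathbb{C}^n\setminus K_\omega$ is connected, which it gets by the elementary observation that $\mathbb{C}^n\setminus K_\omega=\bigcup_{\omega'\ne\omega}\bigl((\mathbb{C}^n\setminus K)\cup\omega'\bigr)$ is a union of connected sets with nonempty common intersection; from this, $\omega\setminus K=\omega\cap K_\omega^{\mathrm u}$ lies in the coincidence set, and no connectedness of $\omega\setminus K$ is ever invoked. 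Your lemma --- that $\mathbb{C}^n\setminus K$ connected forces each $\omega\setminus K$ connected --- is exactly the content of Proposition~\ref{p.connectedness}(\ref{e.3}), but the paper proves the relevant implication \emph{after the fact}, using the already-built holomorphic companion (extend a locally constant $\{1,2\}$-valued function and contradict the identity theorem), whereas your Alexander--Lefschetz duality argument is purely topological and is, as far as I can check, correct: $\check H^{2n-1}(K)=0$ from the connectedness of $S^{2n}\setminus K$, the direct-summand reduction to the clopen piece $K_i$, and the exact sequence $0=\check H^{2n-1}(K_i)\to H_0(\Omega_i\setminus K_i)\to H_0(\Omega_i)\to\check H^{2n}(K_i)=0$ all go through. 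What each approach buys: yours treats Theorem~\ref{t.Hartogs} as a black box and keeps the function theory and the topology cleanly separated, at the cost of \v{C}ech cohomology and duality in manifolds; the paper's stays entirely elementary on the topological side but requires reproving the extension via $\bar\partial$ so as to control \emph{where} the extension agrees with $f$ (the unbounded component), which is precisely the information that lets it avoid your stronger lemma. Your injectivity argument and the componentwise gluing are both fine and match what the paper would do.
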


The surjectivity of the restriction operator $\rho$ is equivalent to the existence of an extension $\tilde f\in\mathcal{H}(\Omega)$ for every map $f\in\mathcal{H}(\Omega\setminus K)$. According to the above results, such extensions exist if one of the sets $\Omega\setminus K$ and $\mathbb{C}^n\setminus K$ is connected.

In this paper we improve and unify Theorems~\ref{t.Hartogs} and~\ref{t.Kugelsatz} as an equivalence of four statements and we generalize them to spaces of infinite dimension. To accomplish this we prove several facts, not necessarily in the order listed below.\\[1mm]
\textbf{1. }We show that every map $f\in\mathcal{H}(\Omega\setminus K)$ has an extension $\tilde f\in\mathcal{H}(\Omega)$, \emph{if and only if} $\mathbb{C}^n\setminus K$ is connected (Theorem~\ref{t.Hartogs=}). Therefore, the ``\emph{right condition}'' in Theorem~\ref{t.Hartogs} is the connectedness of $\mathbb{C}^n\setminus K$, which is also \emph{equivalent} to the surjectivity of the restriction map $\rho$ from Theorem~\ref{t.Kugelsatz}. It turns out that extending every $f\in\mathcal{H}(\Omega\setminus K)$ only depends on $K$, and not on the surrounding open set $\Omega\supset K$. Furthermore, in Theorem~\ref{t.Hartogs} we may replace compactness by significantly weaker assumptions (see Corollary~\ref{c.Hartogs}) allowing $K$ to be unbounded and not closed, and $K\cup(\mathbb{C}^n\setminus\Omega)$ to be path-connected (that is, $K$ ``breaks the boundary'' of $\Omega$), as in Example~\ref{ex.Hartogs}.\\[1mm]
\textbf{2. }Without the connectedness assumption from Theorem~\ref{t.Hartogs}, every $f\in\mathcal{H}(\Omega\setminus K)$ still has a unique ``Hartogs companion'' $\tilde f\in\mathcal{H}(\Omega)$ matching $f$ on a coincidence set (Theorem~\ref{t.C-companion}). The resulting association $f\mapsto\tilde f$ is a left inverse for the restriction operator $\rho$ defined as in Theorem~\ref{t.Kugelsatz}. The range inclusion $\tilde f(\Omega)\subset f(\Omega\setminus K)$ holds, and if $f$ has an extension from $\mathcal{H}(\Omega)$, that must be $\tilde f$. This leads to a striking compact excision property of $\mathbb{C}$-valued holomorphic maps of several variables: \emph{removing a compact from the domain does not change the range} (Corollary~\ref{c.excision}). Dropping connectedness is essential for the construction of vector-valued Hartogs companions in arbitrary dimension (Theorem~\ref{t.Y-companion}, obtained by a slicing technique with linear varieties of finite dimension\footnote{Intersections of a connected set with linear varieties may be disconnected.}). This construction leads to the Hartogs Kugelsatz equivalence in infinite dimension (Theorem~\ref{t.extension=} and Corollary~\ref{c.extension=}).\\
\textbf{3. }In the process, we also prove a needed identity theorem for G\^ateaux holomorphic maps on polygonally connected $2$-open sets (Theorem~\ref{t.identity}). As byproducts we get a maximum norm principle (Theorem~\ref{t.maximum}) and a surprising ``max-min'' seminorm principle (Theorem~\ref{t.max-min}). For $f\in\mathcal{H}_\mathrm{G}(\Omega,Y)$, the latter states that if $p\circ f$ has a local maximum value $M>0$ (the ``max'' assumption) for some continuous seminorm $p$ on $Y$, then $p\circ f\ge M$ everywhere (the ``min'' conclusion) and even though $p\circ f$ may not be constant, the map $f$ vanishes nowhere and its range has empty interior.\\[1mm]
\textbf{4. }Hartogs companions in arbitrary dimension lead to several extension theorems for $K\subset\Omega\subset X$ and G\^ateaux holomorphic maps $f\in\mathcal{H}_\mathrm{G}(\Omega\setminus K,Y)$. We assume at most\footnote{In all three theorems listed here, at least one of the three assumptions is weakened.} that $\Omega$ is finitely open, $K$ is finitely compact, and $\Omega\setminus K$ is polygonally connected (Theorems~\ref{t.extension=}, \ref{t.2-extension}, \ref{t.outer}, and the four corollaries from the last section). Even for $X=\mathbb{C}^n$ and $Y=\mathbb{C}$ some of these results are more general than Theorem~\ref{t.Hartogs}, since $K$ may not be closed or bounded and $\Omega$ may not be open (Example~\ref{ex.d-topology}(c)), while G\^ateaux holomorphy still can be considered on $1$-open sets $\Omega\subset\mathbb{C}^n$.\\[1mm]
\textbf{5. }Several regularity properties (local boundedness, continuity, hypocontinuity, holomorphy, hypoanalyticity, Mackey holomorphy) are inherited from a map by its Hartogs companion (Theorem~\ref{t.regularity}), hence also by G\^ateaux holomorphic extensions whenever these exist. We thus get Hartogs-type extension results for five different types of holomorphy (Corollary~\ref{c.extension=}).\\[1mm]
\textbf{6. }Every domain of holomorphy containing the range of a map $f\in\mathcal{H}_\mathrm{G}(\Omega\setminus K,Y)$, also contains the range of the Hartogs companion $\tilde f$; we call this \emph{range inertia}. In particular, Hartogs-type extensions have this property.\\[1mm]
\textbf{7. } Viewing every map $f\in\mathcal{H}_\mathrm{G}(\Omega,Y)$ as the Hartogs companion of its restrictions of the form $f|_{\Omega\setminus K}$ leads to general boundary principles (Theorem~\ref{t.boundary} and Corollary~\ref{c.boundary}); according to the latter, $f(\overline\Omega)=f(\partial\Omega)$ for every $f\in\mathcal{H}_\mathrm{G}(\Omega)\cap\mathcal{C}(\overline\Omega)$, where $\Omega$ is a $2$-bounded open set in a Hausdorff topological vector space.

Due to its very interesting properties and consequences, we may conclude that the Hartogs companion is a new flexible tool which deserves further investigation.

\section{Hartogs companions in finite dimension}\label{s.finite}

\subsection{Hartogs companions in dimension at least two.}\label{ss.ge2}

For arbitrary complex vector space $X$, sets $A,B\subset X$ and $S\subset\mathbb{C}$, and elements $u\in X$, $\lambda\in\mathbb{C}$, it is convenient to write
\begin{eqnarray*}
&&A+B:=\{a+b\,|\,a\in A,\,b\in B\},\qquad S\cdot A:=\{sa\,|\,s\in S,\,a\in A\},\\
&&A+u=u+A:=A+\{u\},\qquad S\cdot u:=S\cdot\{u\},\qquad\lambda\cdot A:=\{\lambda\}\cdot A.
\end{eqnarray*}

\begin{setting}\label{set.Hartogs}
\emph{Throughout Section~\ref{ss.ge2}, for arbitrary integer $n\ge2$ we consider an open set $\Omega\subset\mathbb{C}^n$ and a compact subset $K\subset\Omega$.}
\end{setting}

For shortness, the connected components of any subset of $\mathbb{C}^n$ will be simply called components. We denote by $\Upsilon_\Omega$ the set of all components of $\Omega$.

By removing connectedness assumptions, the following result accomplishes the construction of Hartogs companions in finite dimension. Furthermore, its last part yields both Theorems~\ref{t.Hartogs} and~\ref{t.Kugelsatz}. As Theorem~\ref{t.Hartogs=} will show, \emph{the Hartogs phenomenon for holomorphic functions is characterized by the connectedness of $\mathbb{C}^n\setminus K$}.

\begin{theorem}[Hartogs companions in finite dimension]\label{t.C-companion}
Let us define the coincidence (open) set of the inclusion $K\subset\Omega$ as
\[C_{K,\Omega}:=\bigcup_{\omega\in\Upsilon_\Omega}(\omega\cap K_\omega^\mathrm{u})\subset\Omega\setminus K,\]
where $K_\omega^\mathrm{u}$ denotes the unbounded component of $\mathbb{C}^n\setminus(K\cap\omega)$. Let an arbitrary map $f\in\mathcal{H}(\Omega\setminus K)$. Then
\begin{description}
\item[(a)] There exists a unique map $\tilde f\in\mathcal{H}(\Omega)$ (which will be called the Hartogs companion of $f$), such that
\begin{equation}\label{e.companion}\tilde f|_{C_{K,\Omega}}=f|_{C_{K,\Omega}}.\end{equation}
Furthermore, we have the range inclusion
\begin{equation}\label{e.range}\tilde f(\Omega)\subset f(\Omega\setminus K).\end{equation}
If $f$ has an extension $\bar f\in\mathcal{H}(\Omega)$, then $\bar f=\tilde f$.
\item[(b)] If $K\subset K_0\subset\Omega_0\subset\Omega$, with $K_0$ compact and $\Omega_0$ open, then $\tilde f|_{\Omega_0}$ is the Hartogs companion of $f|_{\Omega_0\setminus K_0}$.
\item[(c)] For arbitrarily fixed $a\in\Omega$ and $u\in\mathbb{C}^n\setminus\{0\}$, let a bounded open set $G\subset\mathbb{C}$ with the boundary $\partial G$ consisting of finitely many piecewise $\mathcal{C}^1$ Jordan curves, and satisfying the following condition denoted by $C_{G,u}(a)$\textup{:}
\[0\in G,\qquad a+\overline G\cdot u\subset\Omega,\qquad K\cap(a+\mathbb{C}\cdot{u})\subset a+G\cdot u\]
(any set $G$ with the above properties will be called $(a,u)$-admissible\footnote{Or more specific, whenever needed: $(a,u)$-admissible for the inclusion $K\subset\Omega$.}).
Then $\Omega_{G,u}:=\{x\in\Omega\,|\,C_{G,u}(x)\mbox{ holds}\}$ is an open neighborhood of $a$. We have the inclusion $\Omega_{G,u}+\partial G\cdot u\subset\Omega\setminus K$ and the representation formula (where the boundary $\partial G$ is oriented such that $G$ lies to the left of $\partial G$)
\begin{equation}\label{e.representation}\tilde f(x)=\frac1{2\pi\mathrm{i}}\int_{\partial G}\frac{f(x+\zeta u)}\zeta\mathrm{d}\zeta,\quad\mbox{for every }x\in\Omega_{G,u}.\end{equation}
Hence for every linear variety $L\subset\mathbb{C}^n$ of dimension at least one, such that $\Omega_L:=\Omega\cap L\ne\emptyset$, the restriction $\tilde f|_{\Omega_L}$ is uniquely determined by $f|_{\Omega_L\setminus K}$.
\item[(d)] If $\mathbb{C}^n\setminus K$ is connected, then $C_{K,\Omega}=\Omega\setminus K$, and hence $\tilde f|_{\Omega\setminus K}=f$.
\end{description}
\end{theorem}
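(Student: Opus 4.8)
The plan is to build $\tilde f$ on overlapping ``tube'' neighborhoods via a Cauchy-integral slicing construction, then glue these local pieces into a global $\tilde f\in\mathcal H(\Omega)$ using the identity theorem in $\mathbb C^n$, and finally extract properties (a)--(d) from the construction. Concretely, for each $a\in\Omega$ pick a direction $u$ and an $(a,u)$-admissible $G$ (such $G$ exist because $K\cap(a+\mathbb C\cdot u)$ is compact, being closed inside the compact $K$, and avoids $a$ whenever $a\notin K$; when $a\in K$ one translates the slicing line slightly, or observes that continuity of the integral formula in $x$ lets $\Omega_{G,u}$ contain points of $K$ too). Then define $\tilde f$ on $\Omega_{G,u}$ by the right-hand side of \eqref{e.representation}. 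The first routine point is that $\Omega_{G,u}$ is open and contains $a$: the three conditions in $C_{G,u}(x)$ are open conditions in $x$ (the first is vacuous, the second uses compactness of $\overline G$ and openness of $\Omega$, the third uses that $K$ is compact and $a+G\cdot u$ is open along the line), and $\Omega_{G,u}+\partial G\cdot u\subset\Omega\setminus K$ because $\partial G$ misses the projection of $K$ by the third condition and lands in $\Omega$ by the second. Holomorphy of the integral in $x$ is standard differentiation under the integral sign (the integrand is jointly continuous and holomorphic in $x$ for $\zeta\in\partial G$ fixed, as $x+\zeta u$ stays in the open set $\Omega\setminus K$).

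Next I would prove the \emph{local} coincidence with $f$: if $x\in\Omega_{G,u}$ happens to lie in $C_{K,\Omega}$, more precisely if the whole disc-like region $x+\overline G\cdot u$ avoids $K$, then by the one-variable Cauchy integral formula the right side of \eqref{e.representation} equals $f(x)$; and more generally, if $x$ is in the component $\omega$ of $\Omega$ and $x\in K_\omega^{\mathrm u}$, a homotopy/deformation-of-contour argument inside $\mathbb C\setminus(K\cap\omega\cap(x+\mathbb C\cdot u))$ shows the integral is unchanged and still reproduces $f(x)$. The key structural fact is \emph{independence of the choice of $(u,G)$}: on the overlap of two tubes $\Omega_{G,u}\cap\Omega_{G',u'}$, both candidate functions are holomorphic and they agree on a nonempty open subset (any small ball around a point that can be reached from the unbounded component, e.g. a point of $C_{K,\Omega}\cap\Omega_{G,u}\cap\Omega_{G',u'}$, which is nonempty because $C_{K,\Omega}$ meets every component of $\Omega$ and the tubes are neighborhoods of such points after shrinking), so by the identity theorem on the connected component of the overlap they coincide there. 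One must check the overlaps are connected, or instead argue component-by-component of $\Omega$ and use that $C_{K,\Omega}\cap\omega=\omega\cap K_\omega^{\mathrm u}$ is nonempty and connected enough to propagate equality — this is where I expect to lean on Theorem~\ref{t.identity} in the plain $\mathbb C^n$ setting. Patching the consistent local definitions yields a well-defined $\tilde f\in\mathcal H(\Omega)$ with $\tilde f|_{C_{K,\Omega}}=f|_{C_{K,\Omega}}$, giving existence in (a); uniqueness is immediate since any two Hartogs companions agree on the open set $C_{K,\Omega}$ (nonempty on each component) hence everywhere by the identity theorem.

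For the remaining assertions: the range inclusion \eqref{e.range} follows because $\tilde f(x)$ is, by \eqref{e.representation}, a ``Cauchy average'' $\frac1{2\pi\mathrm i}\int_{\partial G}\frac{f(x+\zeta u)}\zeta\,\mathrm d\zeta$ of values of $f$ on $\Omega\setminus K$ — more precisely one rewrites it as a mean value over a circle after a biholomorphic change of variables, exhibiting $\tilde f(x)$ as a limit of convex combinations, but since we only claim $\tilde f(x)\in f(\Omega\setminus K)$ (closure), it suffices to note that for a circular $G$ it is literally the average of $f$ over a circle contained in $\Omega\setminus K$, and such an average lies in the closed convex hull, hence in the closure; actually the paper claims membership in $\overline{f(\Omega\setminus K)}$, so the mean-value-in-closure argument is enough, and the general $G$ reduces to the circular case by contour deformation. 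If $f$ has an extension $\bar f\in\mathcal H(\Omega)$, then $\bar f$ is a Hartogs companion (it matches $f$, hence $f|_{C_{K,\Omega}}$), so $\bar f=\tilde f$ by uniqueness. Part (b) is the locality of the construction: $C_{K_0,\Omega_0}\subset C_{K,\Omega}$ (shrinking $K$ and $\Omega$ only enlarges the unbounded components), $\tilde f|_{\Omega_0}$ is holomorphic on $\Omega_0$ and agrees with $f|_{\Omega_0\setminus K_0}$ on the nonempty open set $C_{K_0,\Omega_0}$, so by uniqueness it is the Hartogs companion of $f|_{\Omega_0\setminus K_0}$ — provided $C_{K_0,\Omega_0}$ meets every component of $\Omega_0$, which holds because $\mathbb C^n\setminus(K_0\cap\omega_0)$ is unbounded hence its unbounded component meets $\omega_0$. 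Part (c) is exactly the representation formula already established plus the observation that varying $L$ through the one-dimensional slices $a+\mathbb C\cdot u$ determines $\tilde f|_{\Omega_L}$; and part (d): if $\mathbb C^n\setminus K$ is connected then for every component $\omega$ of $\Omega$ one has $K\cap\omega\subset K$, so $\mathbb C^n\setminus(K\cap\omega)\supset\mathbb C^n\setminus K$ is connected and unbounded, i.e. $K_\omega^{\mathrm u}=\mathbb C^n\setminus(K\cap\omega)\supset\mathbb C^n\setminus K\supset\omega\setminus K$, whence $\omega\cap K_\omega^{\mathrm u}=\omega\setminus K$ and $C_{K,\Omega}=\Omega\setminus K$, so $\tilde f|_{\Omega\setminus K}=f$.

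The main obstacle I anticipate is the \emph{consistency of the local pieces} — showing the tube-wise definitions agree on overlaps and can be read off from values on $C_{K,\Omega}$. The clean way is: fix a component $\omega$ of $\Omega$; every point $x\in\omega$ can be joined to a point of the (nonempty, open) set $\omega\cap K_\omega^{\mathrm u}=C_{K,\Omega}\cap\omega$ by a polygonal path in $\omega$ that, after a generic small perturbation, avoids $K$ except possibly at isolated transversal crossings, along which the Cauchy integral analytically continues unchanged; alternatively one invokes that the local tube-functions, being holomorphic and agreeing on the open set $C_{K,\Omega}\cap(\text{tube})\ne\emptyset$, must agree on the connected tube by the identity theorem, and a connectedness argument on $\omega$ (a chain of overlapping tubes, each overlap nonempty, plus $C_{K,\Omega}\cap\omega$ nonempty and meeting the chain) propagates the single global value. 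Getting the bookkeeping right for which tubes overlap and that equality propagates through $\omega$ — i.e. that the ``agrees with the value forced by $C_{K,\Omega}$'' property is both open and closed in $\omega$ — is the technical heart; everything else is either one-variable Cauchy theory or a direct application of Theorem~\ref{t.identity}.
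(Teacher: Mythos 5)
There are two genuine gaps, and both sit at the heart of the statement rather than in bookkeeping. First, your existence construction never actually proves that the slice integrals are mutually consistent or that they return $f$ on $C_{K,\Omega}$. The claim that for $x\in\omega\cap K_\omega^{\mathrm u}$ a ``homotopy/deformation-of-contour argument inside $\mathbb C\setminus(K\cap\omega\cap(x+\mathbb C\cdot u))$'' shows the integral reproduces $f(x)$ is unjustified: on the slice, $\frac1{2\pi\mathrm i}\int_{\partial G}\frac{f(x+\zeta u)}\zeta\,\mathrm d\zeta$ equals $f(x)$ \emph{plus} the integral over contours surrounding the compact slice $\{\zeta\,|\,x+\zeta u\in K\}$, and no one-variable deformation makes that residual term vanish; its vanishing is essentially the content of the theorem. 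Likewise your gluing step presupposes that each tube $\Omega_{G,u}$ is connected (it need not be; the paper later works with the component $D_{G,u}$ for exactly this reason), that $C_{K,\Omega}$ meets every tube or every component of every overlap (false in general for tubes around points far from $C_{K,\Omega}$), and the ``perturb the path so it crosses $K$ only transversally, along which the Cauchy integral continues unchanged'' fallback again assumes the conclusion at the crossings. The paper sidesteps all of this with the Ehrenpreis--H\"ormander $\bar\partial$-method: it sets $f_\chi=(1-\chi)f$, solves $\bar\partial h=\bar\partial f_\chi$ with the explicit compactly supported solution, observes $h\equiv0$ on the unbounded component of the complement of $\operatorname{supp}\chi$, and gets a \emph{global} $\tilde f=f_\chi-h\in\mathcal H(\Omega)$ in one stroke; coincidence on $C_{K,\Omega}$ follows by choosing $\chi$ whose support misses a polygonal chain joined to a ray to infinity, and the representation formula \textup{(\ref{e.representation})} is then \emph{derived} from the explicit $h$ via Green's formula, not used as the definition. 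If you want a slicing-only proof you must supply the analytic-continuation-in-the-transverse-parameter argument that replaces this, which is precisely what is missing.

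Second, you prove a strictly weaker range statement than the one asserted. The theorem claims the exact inclusion $\tilde f(\Omega)\subset f(\Omega\setminus K)$, not membership in $\overline{f(\Omega\setminus K)}$ or in a closed convex hull; a ``Cauchy average over a circle'' argument can only give the convex-hull/closure bound, so your reading of the claim is incorrect and your argument cannot reach it. The paper's proof is algebraic: for $z\notin f(\Omega\setminus K)$ one forms $h=\frac1{f-z}\in\mathcal H(\Omega\setminus K)$, uses uniqueness of companions to see that the companion operator is multiplicative, whence $(\tilde f-z)\tilde h\equiv1$ and $z\notin\tilde f(\Omega)$. (A smaller fixable point: in (d) you assert that $\mathbb C^n\setminus(K\cap\omega)$ is connected because it contains the connected set $\mathbb C^n\setminus K$, which is a non sequitur; the paper proves this by writing $\mathbb C^n\setminus K_\omega=\bigcup_{\omega'\ne\omega}(K^{\mathrm c}\cup\omega')$ and checking each piece is connected and meets $K^{\mathrm c}$.)
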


\begin{proof}
To shorten notation, we write $C_{K,\Omega}$ and $\Upsilon_\Omega$ simply as $C$ and $\Upsilon$, respectively. There is no loss of generality in assuming $K\ne\emptyset$.\\
(a). The uniqueness of $\tilde f$ follows easily by (\ref{e.companion}) and the identity theorem applied to $\tilde f|_\omega$ for each component $\omega\in\Upsilon$, since $\omega\cap C=\omega\cap K_\omega^\mathrm{u}\ne\emptyset$ is an open subset of $\omega$. Therefore, we only need to prove the existence part for an arbitrarily fixed $\omega\in\Upsilon$, its compact subset $K_\omega:=K\cap\omega$, and the restriction $f|_{\omega\setminus K_\omega}\in\mathcal{H}(\omega\setminus K_\omega)$. Consequently, there is no loss of generality in assuming that $\Omega$ is connected. Thus $C=\Omega\cap K^\mathrm{u}$, where $K^\mathrm{u}$ denotes the unbounded component of $\mathbb{C}^n\setminus K$. Let us fix $\chi\in\mathcal{C}_0^\infty(\Omega)$, with $\chi\equiv1$ on some neighborhood of $K$ (throughout the proof, any such $\chi$ will be called a $K$-map). Set $K_\chi:=\mathrm{supp}\,\chi\subset\Omega$.\\
\emph{The construction below of the map $\tilde f_\chi\in\mathcal{H}(\Omega)$ follows the idea from the proof\footnote{The cited proof uses the $\bar\partial$ technique initiated by Ehrenpreis\,\cite{ehrenpreis}.} of Theorem~2.3.2 from H\"ormander\,\cite{hormander} (p.30)}. There is a (unique) map $f_\chi\in\mathcal{C}^\infty(\Omega)$, such that $f_\chi=(1-\chi)f$ on $\Omega\setminus K$ and $f_\chi|_K\equiv0$. Hence $f_\chi=f$ on $\Omega\setminus K_\chi$. Since $f\in\mathcal{H}(\Omega\setminus K)$, for the smooth complex differential $(0,1)$-form on $\Omega$ defined by
\[\bar\partial f_\chi=\sum_{j=1}^n\frac{\partial f_\chi}{\partial\bar z_j}\mathrm{d}\bar z_j,\]
we have $\mathrm{supp}\,\bar\partial f_\chi\subset K_\chi$. Therefore, $\bar\partial f_\chi$ extends (by $0$) to a differential $(0,1)$-form $g=\sum_{j=1}^ng_j\mathrm{d}\bar z_j$, with $g_1,\dots,g_n\in\mathcal{C}_0^\infty(\mathbb{C}^n)$ and $\mathrm{supp}\,g\subset K_\chi$. It follows that $\bar\partial g|_\Omega=\bar\partial^2f_\chi\equiv0$ and $\bar\partial g|_{\mathbb{C}^n\setminus K_\chi}\equiv0$, that is, $\bar\partial g\equiv0$. For the map $h\in\mathcal{C}_0^\infty(\mathbb{C}^n)$ defined by (see Th.\,2.3.1 from H\"ormander\,\cite{hormander} and its proof)
\begin{equation}\label{e.h}h(z_1,\dots,z_n)=\frac1{2\pi\mathrm{i}}\int_\mathbb{C}\frac{g_1(\zeta,z_2,\dots,z_n)}{\zeta-z_1}\mathrm{d}\zeta\wedge\mathrm{d}\bar\zeta,\end{equation}
we have $\bar\partial h=g$. Since $K\subset K_\chi\subset\Omega$, for the unbounded component $K_\chi^\mathrm{u}$ of $\mathbb{C}^n\setminus K_\chi$ it is easily seen that
\[K_\chi^\mathrm{u}\subset K^\mathrm{u},\qquad\Omega\cap K_\chi^\mathrm{u}\ne\emptyset.\]
On $K_\chi^\mathrm{u}\subset\mathbb{C}^n\setminus\mathrm{supp}\,g$ we have $\bar\partial h=g\equiv0$, and so $h|_{K_\chi^{^\mathrm{u}}}\in\mathcal{H}(K_\chi^\mathrm{u})$. As $h\in\mathcal{C}_0^\infty(\mathbb{C}^n)$ vanishes on the open set $K_\chi^\mathrm{u}\setminus\mathrm{supp}\,h\ne\emptyset$, by the identity theorem we get $h|_{K_\chi^{^\mathrm{u}}}\equiv0$. For $\tilde f_\chi:=f_\chi-h|_\Omega\in\mathcal{C}^\infty(\Omega)$, we have $\bar\partial\tilde f_\chi=\bar\partial f_\chi-g|_\Omega\equiv0$, that is, $\tilde f_\chi\in\mathcal{H}(\Omega)$. Clearly, $\tilde f_\chi|_{\Omega\cap K_\chi^{^\mathrm{u}}}=f_\chi|_{\Omega\cap K_\chi^{^\mathrm{u}}}-h|_{\Omega\cap K_\chi^{^\mathrm{u}}}=f|_{\Omega\cap K_\chi^{^\mathrm{u}}}$. Let us observe that the map $\tilde f:=\tilde f_\chi\in\mathcal{H}(\Omega)$ does not depend on the choice of $\chi$. Indeed, for every $K$-map $\eta$ with $K_\eta:=\mathrm{supp}\,\eta\subset K_\chi$, we have $K_\chi^\mathrm{u}\subset K_\eta^\mathrm{u}$. This yields $\tilde f_\chi|_{\Omega\cap K_\chi^{^\mathrm{u}}}=f|_{\Omega\cap K_\chi^{^\mathrm{u}}}=\tilde f_\eta|_{\Omega\cap K_\chi^{^\mathrm{u}}}$, which forces $\tilde f_\chi=\tilde f_\eta$, by the identity theorem. Hence $\tilde f$ has the property that
\begin{equation}\label{e.Kru}\tilde f|_{\Omega\cap K_\chi^{^\mathrm{u}}}=f|_{\Omega\cap K_\chi^{^\mathrm{u}}},\quad\mbox{for every $K$-map }\chi.\end{equation}
In order to show that $\tilde f|_C=f|_C$, let us fix $a\in C=\Omega\cap K^\mathrm{u}$. Choose $c\in\mathbb{C}^n$, with $\|c\|>\max_{x\in K}\|x\|$. Thus $\delta:=[1,\infty[\cdot c\subset K^\mathrm{u}$. As $K^\mathrm{u}$ is a connected open set, there is a polygonal chain $\Lambda\subset K^\mathrm{u}$ joining $a$ to $c$. Hence $\Delta:=\delta\cup\Lambda\subset K^\mathrm{u}$. There exists a $K$-map $\chi$, such that $K_\chi\cap\Delta=\emptyset$. Since $\Delta$ is an unbounded connected set, it follows that $\Delta\subset K_\chi^\mathrm{u}$, and so $a\in\Omega\cap K_\chi^\mathrm{u}$. By (\ref{e.Kru}) we deduce that $\tilde f(a)=f(a)$. As $a$ was arbitrary, we conclude that $\tilde f|_C=f|_C$. We thus have proved the existence and uniqueness of the map $\tilde f\in\mathcal{H}(\Omega)$ satisfying (\ref{e.companion}). Let us observe that whenever $f$ has an extension $\bar f\in\mathcal{H}(\Omega)$, then $\bar f|_C=f|_C$ yields $\bar f=\tilde f$, by the uniqueness of $\tilde f$.\\
\emph{The range inclusion} (\ref{e.range}). Let us fix $z\in\mathbb{C}\setminus f(\Omega\setminus K)$. For $f_z,h,g\in\mathcal{H}(\Omega\setminus K)$ and $p\in\mathcal{H}(\Omega)$, defined by $f_z:=f-z$, $h:=\frac1{f_z}$, $g:=f_zh$, $p\equiv1$, we have $(\tilde f_z\tilde h)|_C=g|_C$ and $(\tilde f-z)|_C=f_z|_C$ and $p|_C=g|_C$. By the uniqueness of the Hartogs companions we get $\tilde g=\tilde f_z\tilde h$ and $\tilde f_z=\tilde f-z$ and $\tilde g=p$. It follows that $(\tilde f-z)\tilde h=\tilde f_z\tilde h=\tilde g=p\equiv1$, which yields $z\in\mathbb{C}\setminus\tilde f(\Omega)$. We thus conclude that $\tilde f(\Omega)\subset f(\Omega\setminus K)$. It $f$ has an extension from $\mathcal{H}(\Omega)$, that must be $\tilde f$. In this case, $f(\Omega\setminus K)=\tilde f(\Omega\setminus K)\subset\tilde f(\Omega)$, and so $\tilde f(\Omega)=f(\Omega\setminus K)$. For another proof of the range inclusion (\ref{e.range}), see Proposition~\ref{p.range} (independent of any preceeding results) and the comment at the end of its proof.\\
(c). Let us consider $a\in\Omega$ and $u\in\mathbb{C}^n\setminus\{0\}$, and an $(a,u)$-admissible set $G\subset\mathbb{C}$. Clearly, such sets $G$ exist (in $\mathbb{C}\cdot u\simeq\mathbb{C}$, the compact $(K-a)\cap(\mathbb{C}\cdot u)$ may be covered by a finite union of open balls with the closures contained in the open set $(\Omega-a)\cap(\mathbb{C}\cdot u)$). For every $x\in\mathbb{C}^n$, the last two conditions from $C_{G,u}(x)$ may be written as (where $G^\mathrm{c}:=\mathbb{C}\setminus G$ and $A^\mathrm{c}:=\mathbb{C}^n\setminus A$ for every $A\subset\mathbb{C}^n$)
\begin{eqnarray*}
x+\overline G\cdot u\subset\Omega\!\!\!\!&\iff&\!\!\!\!\big(x+\overline G\cdot u\big)\cap\Omega^\mathrm{c}=\emptyset\iff x\notin\Omega^\mathrm{c}-\overline G\cdot u,\\
K\cap(x+\mathbb{C}\cdot u)\subset x+G\cdot u\!\!\!\!&\iff&\!\!\!\!K\cap(x+G^\mathrm{c}\cdot u)=\emptyset\iff x\notin K-G^\mathrm{c}\cdot u,
\end{eqnarray*}
and these obviously yield $x\in\Omega$ and $x+\partial G\cdot u\subset\Omega\setminus K$. We thus get
\[\Omega_{G,u}=\big(\Omega^\mathrm{c}-\overline G\cdot u\big)^\mathrm{c}\cap\big(K-G^\mathrm{c}\cdot u\big)^\mathrm{c}\subset\Omega,\qquad\Omega_{G,u}+\partial G\cdot u\subset\Omega\setminus K.\]
All four sets $\overline G\cdot u$, $K$, $\Omega^\mathrm{c}$, $G^\mathrm{c}\cdot u$, are closed in $\mathbb{C}^n$, and the first two are compact. Hence both $\Omega^\mathrm{c}-\overline G\cdot u$ and $K-G^\mathrm{c}\cdot u$ are closed, and so $\Omega_{G,u}$ is open. Clearly, $a\in\Omega_{G,u}$. In order to prove (\ref{e.representation}), let us fix $x\in\Omega_{G,u}$. By using a linear change of coordinates in $\mathbb{C}^n$, we may assume $u=(1,0,\dots,0)$. Set $x^1:=(x_2,\dots,x_n)\in\mathbb{C}^{n-1}$ and $L:=x+\mathbb{C}\cdot u=\mathbb{C}\times\{x^1\}$. Thus $\Omega_L=\{z_1\in\mathbb{C}\,|\,(z_1,x^1)\in\Omega\}$. Let us choose another $(x,u)$-admissible set $G_0\subset\mathbb{C}$, such that $\overline G_0\subset G$. Hence both conditions $C_{G,u}(x)$ and $C_{G_0,u}(x)$ hold, and so
\[0\in G_0,\qquad x+\overline G\cdot u\subset\Omega_L,\qquad K\cap L\subset x+G_0\cdot u.\]
Since $K_0:=K\cup(x+\overline G_0\cdot u)$ is compact, $\Omega_0:=\Omega\setminus(x+G^\mathrm{c}\cdot u)$ is open, and $K_0\subset\Omega_0$, there exists $\chi\in\mathcal{C}_0^\infty(\Omega)$, such that $\chi\equiv1$ on some neighborhood $\Omega_1\subset\Omega_0$ of $K_0\supset K$ and $K_\chi=\mathrm{supp}\,\chi\subset\Omega_0$. Let us observe that
\[K_0\cap L=x+\overline G_0\cdot u,\qquad K_\chi\cap L\subset x+G\cdot u.\]
By the construction from (a) it follows that $\tilde f=f_\chi-h|_\Omega$, where $h$ is defined by (\ref{e.h}) and $g_1\in\mathcal{C}_0^\infty(\mathbb{C}^n)$ is the extension by $0$ of $\frac{\partial f_\chi}{\partial\bar z_1}\in\mathcal{C}_0^\infty(\Omega)$. Hence $\mathrm{supp}\,g_1\subset K_\chi\setminus\Omega_1$. For $S:=G\setminus\overline G_0\subset\mathbb{C}$, we have $\overline S=\overline G\setminus G_0$ and $\partial S=\partial G\cup\partial G_0$, and so
\begin{eqnarray*}
x+\overline S\cdot u\!\!\!\!&=&\!\!\!\!(x+\overline G\cdot u)\setminus(x+G_0\cdot u)\subset(x+\overline G\cdot u)\setminus(K\cap L)\subset\Omega\setminus K,\\
\mathrm{supp}\,(g_1|_{\Omega_L})\!\!\!\!&\subset&\!\!\!\!\mathrm{supp}\,g_1\cap L\subset(K_\chi\setminus\Omega_1)\cap L\subset(K_\chi\cap L)\setminus(K_0\cap L)\\
&\subset&\!\!\!\!(x+G\cdot u)\setminus(x+\overline G_0\cdot u)=x+S\cdot u=(x_1+S)\times\{x^1\}.
\end{eqnarray*}
Hence $\mathrm{supp}\,[g_1(\cdot,x^1)]\subset x_1+S$. Since $f_\chi\equiv0$ on $\Omega_1\supset K_0\supset x+\partial G_0\cdot u$ and $f_\chi=f$ on $\Omega\setminus K_\chi\supset\Omega_L\setminus(K_\chi\cap L)\supset x+\partial G\cdot u$, by the definitions of $\tilde f,f_\chi,g_1,h$, together with (\ref{e.h}) and Green's formula (for the $\mathcal{C}^\infty$ differential $1$-form $\zeta\mapsto\frac{f_\chi(\zeta+x_1,x^1)}\zeta\mathrm{d}\zeta$ on $\mathbb{C}\setminus\{0\}\supset\overline S$ and the compact set $\overline S$ with piecewise $\mathcal{C}^1$ boundary), it follows that
\begin{eqnarray*}
\tilde f(x)\!\!\!\!&=&\!\!\!\!-h(x)=\frac1{2\pi\mathrm{i}}\int_\mathbb{C}\frac{g_1(\zeta,x^1)}{\zeta-x_1}\mathrm{d}\bar\zeta\wedge\mathrm{d}\zeta=\frac1{2\pi\mathrm{i}}\int_{x_1+S}\frac{\frac{\partial f_\chi}{\partial\bar z_1}(\zeta,x^1)}{\zeta-x_1}\mathrm{d}\bar\zeta\wedge\mathrm{d}\zeta\\
&=&\!\!\!\!\frac1{2\pi\mathrm{i}}\int_S\frac{\frac{\partial f_\chi}{\partial\bar z_1}(\zeta+x_1,x^1)}\zeta\mathrm{d}\bar\zeta\wedge\mathrm{d}\zeta=\frac1{2\pi\mathrm{i}}\int_S\frac\partial{\partial\bar\zeta}\left(\frac{f_\chi(\zeta+x_1,x^1)}\zeta\right)\mathrm{d}\bar\zeta\wedge\mathrm{d}\zeta\\
&=&\!\!\!\!\frac1{2\pi\mathrm{i}}\int_{\partial S}\frac{f_\chi(\zeta+x_1,x^1)}\zeta\mathrm{d}\zeta=\frac1{2\pi\mathrm{i}}\int_{\partial S}\frac{f_\chi(x+\zeta u)}\zeta\mathrm{d}\zeta=\frac1{2\pi\mathrm{i}}\int_{\partial G}\frac{f(x+\zeta u)}\zeta\mathrm{d}\zeta.
\end{eqnarray*}
We thus have proved the representation formula (\ref{e.representation}). Now let us fix a linear variety $L\subset\mathbb{C}^n$ as in (c). For $a\in\Omega_L$ and $u\in(L-a)\setminus\{0\}$, we may write $\tilde f(a)$ as in (\ref{e.representation}). Since $a+\mathbb{C}\cdot u\subset L$, we conclude that $\tilde f(a)$ only depends on the restriction $f|_{\Omega_L\setminus K}$.\\
(b). According to (a), the map $f_0:=f|_{\Omega_0\setminus K_0}$ has a unique Hartogs companion $\tilde f_0\in\mathcal{H}(\Omega_0)$. For fixed $a\in\Omega_0$, let us choose $u\in\mathbb{C}^n\setminus\{0\}$ and a set $G\subset\mathbb{C}$, which is $(a,u)$-admissible for the inclusion $K_0\subset\Omega_0$, and hence also for $K\subset\Omega$. By using the representation formula (\ref{e.representation}) for both $\tilde f$ and $\tilde f_0$ it follows that $\tilde f(a)=\tilde f_0(a)$. As $a$ was arbitrary, we conclude that $\tilde f|_{\Omega_0}=\tilde f_0$.\\
(d). Assume $K^\mathrm{c}=\mathbb{C}^n\setminus K$ is connected. We claim that for every $\omega\in\Upsilon$, the set $\mathbb{C}^n\setminus K_\omega$ is connected. To show this, let us fix $\omega\in\Upsilon$. There is no restriction in assuming $\Upsilon\ne\{\omega\}$ (otherwise, $\omega=\Omega$ and $\mathbb{C}^n\setminus K_\omega=K^\mathrm{c}$ is connected). We have
\[\mathbb{C}^n\setminus K_\omega=K^\mathrm{c}\cup\omega^\mathrm{c}=K^\mathrm{c}\cup\Omega^\mathrm{c}\cup(\Omega\setminus\omega)=K^\mathrm{c}\cup(\Omega\setminus\omega)=\bigcup_{\omega'\in\Upsilon\setminus\{\omega\}}(K^\mathrm{c}\cup\omega').\]
For every $\omega'\in\Upsilon$, the set $K^\mathrm{c}\cup\omega'$ is connected, since so are both $K^\mathrm{c}$ and $\omega'$, and $K^\mathrm{c}\cap\omega'=\omega'\setminus K\ne\emptyset$. Since $\bigcap_{\omega'\in\Upsilon\setminus\{\omega\}}(K^\mathrm{c}\cup\omega')\supset K^\mathrm{c}\ne\emptyset$, the set $\mathbb{C}^n\setminus K_\omega$ is connected. Our claim is proved. Hence $\omega\cap K_\omega^\mathrm{u}=\omega\setminus K_\omega=\omega\setminus K$. As $\omega$ was arbitrary, we get $C=\bigcup_{\omega\in\Upsilon}(\omega\setminus K)=\Omega\setminus K$, which yields $\tilde f|_{\Omega\setminus K}=f$, by (\ref{e.companion}).
\end{proof}

\begin{remark}[coincidence set]\label{r.companion}
Since $\Omega\cap K^\mathrm{u}\subset C_{K,\Omega}$, the map $\tilde f$ from Theorem~\ref{t.C-companion}(a) also satisfies the simpler, but weaker condition\footnote{The conditions (\ref{e.companion}) and (\ref{e.weak}) coincide if $\Omega$ is connected.}
\begin{equation}\label{e.weak}\tilde f|_{\Omega\cap K^{^\mathrm{u}}}=f|_{\Omega\cap K^{^\mathrm{u}}},\end{equation}
where $K^\mathrm{u}$ denotes the unbounded component of $\mathbb{C}^n\setminus K$. The above condition yields the uniqueness of $\tilde f$, if and only if $\omega\cap K^\mathrm{u}\ne\emptyset$ for every component $\omega\in\Upsilon_\Omega$. For instance, with the notation $B_r:=B_{\mathbb{C}^n}(0,r)$ for $r>0$, the open sets $\omega_1:=B_1$ and $\omega_2:=B_4\setminus\overline B_2$ are the components of $\Omega:=\omega_1\cup\omega_2$, which contains the compact set $K:=\partial B_3$. We have $\omega_1\cap K_{\omega_1}^\mathrm{u}=B_1$ and $\omega_2\cap K_{\omega_2}^\mathrm{u}=\Omega\cap K^\mathrm{u}=B_4\setminus\overline B_3$. Hence for arbitrarily given $f\in\mathcal{H}(\Omega\setminus K)$, the condition (\ref{e.weak}) determines $\tilde f|_{\omega_2}$, but not $\tilde f|_{\omega_1}$.
\end{remark}

\begin{remark}[left inverse]\label{r.left}
By Theorem~\ref{t.C-companion}(a), the \emph{Hartogs companion operator}
\[H_\mathbb{C}:\mathcal{H}(\Omega\setminus K)\rightarrow\mathcal{H}(\Omega),\qquad H_\mathbb{C}(f)=\tilde f,\]
is a morphism of $\mathbb{C}$-algebras and a left inverse for the restriction operator $\rho$ defined as in Theorem~\ref{t.Kugelsatz}, that is,
\[\widetilde{\rho(g)}=g,\quad\mbox{for every }g\in\mathcal{H}(\Omega).\]
\end{remark}

\begin{proposition}[composition property of Hartogs companions]\label{p.C-composition}
Let an open set $D\subset\mathbb{C}$ and a map $g\in\mathcal{H}(D)$. Then
\[\widetilde{(g\circ f)}=g\circ\tilde f,\quad\mbox{for every }f\in\mathcal{H}(\Omega\setminus K)\mbox{ with }f(\Omega\setminus K)\subset D.\]
\end{proposition}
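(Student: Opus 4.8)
The plan is to exploit the characterizing property of the Hartogs companion, namely that it is \emph{the} element of $\mathcal{H}(\Omega)$ agreeing with a given map on the coincidence set $C_{K,\Omega}$ (Theorem~\ref{t.C-companion}(a)). First I would note that the composition $g\circ f$ makes sense and belongs to $\mathcal{H}(\Omega\setminus K)$, since $f\in\mathcal{H}(\Omega\setminus K)$ has $f(\Omega\setminus K)\subset D$ and $g\in\mathcal{H}(D)$; hence $g\circ f$ has its own Hartogs companion $\widetilde{(g\circ f)}\in\mathcal{H}(\Omega)$ by part (a). Next I would check that the candidate $g\circ\tilde f$ is a legitimate competitor: by the range inclusion (\ref{e.range}) we have $\tilde f(\Omega)\subset f(\Omega\setminus K)\subset D$, so $g\circ\tilde f$ is defined on all of $\Omega$ and lies in $\mathcal{H}(\Omega)$ as a composition of holomorphic maps.

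With both maps in $\mathcal{H}(\Omega)$, the remaining step is to show they coincide on $C_{K,\Omega}$ and then invoke uniqueness. On $C_{K,\Omega}$ we have $\tilde f=f$ by (\ref{e.companion}), hence $(g\circ\tilde f)|_{C_{K,\Omega}}=g\circ(\tilde f|_{C_{K,\Omega}})=g\circ(f|_{C_{K,\Omega}})=(g\circ f)|_{C_{K,\Omega}}$. Thus $g\circ\tilde f\in\mathcal{H}(\Omega)$ satisfies the defining coincidence condition (\ref{e.companion}) for the map $g\circ f\in\mathcal{H}(\Omega\setminus K)$, and by the uniqueness assertion in Theorem~\ref{t.C-companion}(a) it must equal $\widetilde{(g\circ f)}$. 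This gives the claimed identity $\widetilde{(g\circ f)}=g\circ\tilde f$.

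The only genuine subtlety — and the step I would be most careful about — is confirming that $g\circ\tilde f$ is well defined \emph{everywhere} on $\Omega$, i.e. that $\tilde f(\Omega)\subset D$; this is exactly where the range inclusion (\ref{e.range}) is indispensable, since without it one could only guarantee $g\circ\tilde f$ on the open set $\tilde f^{-1}(D)$, which a priori might not be all of $\Omega$ and in particular might fail to contain $C_{K,\Omega}$ in a way that lets the uniqueness argument close. Everything else is formal: composition of holomorphic maps is holomorphic, restriction commutes with composition, and uniqueness of the Hartogs companion does the rest. One could remark that this proposition is the composition-with-holomorphic-functions analogue of the algebra-morphism property recorded in Remark~\ref{r.left}, and that the special cases $g(w)=1/w$ on $D=\mathbb{C}\setminus\{0\}$ (when $0\notin f(\Omega\setminus K)$) recover the multiplicative inversion step already used in the proof of the range inclusion.
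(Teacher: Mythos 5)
Your argument is correct and is essentially the paper's own proof: use the range inclusion (\ref{e.range}) to see that $g\circ\tilde f\in\mathcal{H}(\Omega)$ is defined, observe that it agrees with $g\circ f$ on the coincidence set $C_{K,\Omega}$ because $\tilde f=f$ there, and conclude by the uniqueness in Theorem~\ref{t.C-companion}(a). Your extra remarks (why (\ref{e.range}) is indispensable, the link to Remark~\ref{r.left}) are accurate but not needed.
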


\begin{proof}
Let $f\in\mathcal{H}(\Omega\setminus K)$ as in the claimed property. Hence $\tilde f(\Omega)\subset f(\Omega\setminus K)\subset D$ by (\ref{e.range}), and so $g\circ f\in\mathcal{H}(\Omega\setminus K)$ and $g\circ\tilde f\in\mathcal{H}(\Omega)$. Since $\big(g\circ\tilde f\big)|_{C_{K,\Omega}^d}=(g\circ f)|_{C_{K,\Omega}^d}$, by the uniqueness of the Hartogs companion we conclude that $\widetilde{(g\circ f)}=g\circ\tilde f$.
\end{proof}

We next establish some relevant equivalences for the connectedness assumptions used by Theorems~\ref{t.Hartogs}, \ref{t.Kugelsatz} and~\ref{t.C-companion}(d).

\begin{proposition}[connectedness conditions]\label{p.connectedness}
We have the equivalences
\begin{eqnarray}
\label{e.1}\Omega\setminus K\mbox{ is connected}\!\!\!\!&\iff&\!\!\!\!\mathbb{C}^n\setminus K\mbox{ and }\Omega\mbox{ are connected}.\\
\label{e.2}\mathbb{C}^n\setminus K\mbox{ is connected}\!\!\!\!&\iff&\!\!\!\!\mathbb{C}^n\setminus K_\omega\mbox{ is connected for every }\omega\in\Upsilon_\Omega\\
\label{e.3}&\iff&\!\!\!\!\omega\setminus K\mbox{ is connected for every }\omega\in\Upsilon_\Omega.
\end{eqnarray}
\end{proposition}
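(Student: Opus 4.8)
The plan is to obtain each of the three equivalences by splitting it into an elementary point-set part and a part that exploits the Hartogs companion of Theorem~\ref{t.C-companion}; the last equivalence will then follow formally from the first two. The point of routing through Theorem~\ref{t.C-companion} is that it turns the delicate ``$\Leftarrow$'' implications into one-line applications of the identity theorem.

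\emph{Equivalence \eqref{e.1}.} For ``$\Omega\setminus K$ connected $\Rightarrow$ $\mathbb{C}^n\setminus K$ and $\Omega$ connected'' I argue by contradiction with purely topological means. If $\Omega=\Omega_1\sqcup\Omega_2$ is a separation, then no $\Omega_i$ can lie in $K$ (otherwise $\Omega_i$ would be a nonempty bounded clopen subset of the connected space $\mathbb{C}^n$), so $(\Omega_1\setminus K)\sqcup(\Omega_2\setminus K)$ separates $\Omega\setminus K$. If $\mathbb{C}^n\setminus K=P\sqcup Q$ is a separation, then $\partial P\subset K\subset\Omega$, so the open set $\Omega$ meets both $P$ and $Q$ and $(\Omega\cap P)\sqcup(\Omega\cap Q)$ separates $\Omega\setminus K$. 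For the converse, assume $\mathbb{C}^n\setminus K$ and $\Omega$ connected, and suppose $\Omega\setminus K=A\sqcup B$ were a separation. Then the locally constant function $f$ equal to $0$ on $A$ and $1$ on $B$ lies in $\mathcal{H}(\Omega\setminus K)$, and by Theorem~\ref{t.C-companion}(a),(d) its Hartogs companion $\tilde f\in\mathcal{H}(\Omega)$ satisfies $\tilde f|_{\Omega\setminus K}=f$ (part (d) applies because $\mathbb{C}^n\setminus K$ is connected, so $C_{K,\Omega}=\Omega\setminus K$). Since $\tilde f$ vanishes on the nonempty open set $A$ and $\Omega$ is connected, the identity theorem forces $\tilde f\equiv 0$, contradicting $\tilde f|_B\equiv 1$.

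\emph{Equivalence \eqref{e.2}.} The implication ``$\mathbb{C}^n\setminus K$ connected $\Rightarrow$ $\mathbb{C}^n\setminus K_\omega$ connected for all $\omega$'' is exactly the topological claim already established inside the proof of Theorem~\ref{t.C-companion}(d): for $\omega\in\Upsilon_\Omega$ with $\Upsilon_\Omega\ne\{\omega\}$ one writes $\mathbb{C}^n\setminus K_\omega=\bigcup_{\omega'\ne\omega}\big((\mathbb{C}^n\setminus K)\cup\omega'\big)$, where each term is connected (union of the connected sets $\mathbb{C}^n\setminus K$ and $\omega'$, which meet since $\omega'\setminus K\ne\emptyset$) and all terms contain the nonempty set $\mathbb{C}^n\setminus K$. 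For the converse I first reduce to finitely many disjoint compacts: since $K$ is compact, only finitely many components $\omega_1,\dots,\omega_m$ of $\Omega$ meet $K$, so $K=K_{\omega_1}\sqcup\dots\sqcup K_{\omega_m}$ with each $\mathbb{C}^n\setminus K_{\omega_i}$ connected, and it suffices to show, by induction on $m$, that a disjoint union of $m$ compacts with connected complements has connected complement. The heart is the case $m=2$: given disjoint compacts $L',L''$ with $\mathbb{C}^n\setminus L'$ and $\mathbb{C}^n\setminus L''$ connected, apply the companion trick with $\Omega:=\mathbb{C}^n\setminus L''$ and $K:=L'\subset\Omega$; a hypothetical separation of $\Omega\setminus K=\mathbb{C}^n\setminus(L'\cup L'')$ produces a locally constant $f\in\mathcal{H}(\Omega\setminus K)$, whose companion $\tilde f\in\mathcal{H}(\mathbb{C}^n\setminus L'')$ again equals $f$ on all of $\Omega\setminus K$ by Theorem~\ref{t.C-companion}(d) (now using $\mathbb{C}^n\setminus L'$ connected), and the identity theorem on the connected set $\mathbb{C}^n\setminus L''$ gives the contradiction. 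The inductive step groups the first $m-1$ pieces into one compact, applies the inductive hypothesis to get its connected complement, and then invokes the case $m=2$.

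\emph{Equivalence \eqref{e.3}, and the main obstacle.} Finally, \eqref{e.3} is a formal consequence of the first two: since $\omega\setminus K=\omega\setminus K_\omega$ and $\omega$ is connected, \eqref{e.1} applied to the pair $K_\omega\subset\omega$ shows that $\omega\setminus K$ is connected if and only if $\mathbb{C}^n\setminus K_\omega$ is; quantifying over $\omega\in\Upsilon_\Omega$ and using \eqref{e.2} yields \eqref{e.3}. The delicate points are the two ``$\Leftarrow$'' statements: a direct topological proof of them would essentially have to re-establish that $\mathbb{C}^n$ (or $S^{2n}$) is simply connected, via a Mayer--Vietoris-type argument, whereas passing through Theorem~\ref{t.C-companion} replaces that by the identity theorem — at the price of having to arrange, in the converse of \eqref{e.2}, an ambient connected open set (namely $\mathbb{C}^n\setminus L''$) on which part (d) of that theorem applies, which is precisely what forces the reduction to finitely many disjoint compacts and the induction.
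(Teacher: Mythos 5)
Your proof is correct, and it departs from the paper's mainly in the converse of (\ref{e.2}). The paper first derives (\ref{e.3}) from (\ref{e.1}) (exactly as you do at the end), so that every $\omega\setminus K$ is connected, and then proves the converse of (\ref{e.2}) by a purely topological exchange argument: among polygonal chains joining two given points of $\mathbb{C}^n\setminus K$ it takes one meeting the fewest sets $K_\omega$ and reroutes the offending portion inside the connected set $\omega\setminus K$, contradicting minimality. You instead reduce to a clean lemma --- finitely many pairwise disjoint compacts with connected complements have connected joint complement --- via the decomposition $K=\bigsqcup_i K_{\omega_i}$ over the finitely many components meeting $K$ (each $K_{\omega_i}$ is clopen in $K$, hence compact; worth saying), and you prove the case $m=2$ by a second, legitimate application of Theorem~\ref{t.C-companion}(a),(d) on the ambient connected open set $\mathbb{C}^n\setminus L''$ with compact $L'$; the induction is sound, and this makes (\ref{e.2}) independent of (\ref{e.3}). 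What each route buys: the paper invokes holomorphy only once (in the converse of (\ref{e.1})) and its chain argument localizes to polygonal chains, which is why it transfers verbatim to the setting of Remark~\ref{r.connectedness} (finitely open $\Omega$, finitely compact $K$), whereas your step ``only finitely many components of $\Omega$ meet $K$'' uses genuine compactness of $K$ and would not carry over there; in exchange, you isolate a reusable topological fact (essentially an Alexander-duality statement) proved holomorphically, very much in the spirit of the paper. Two small points to spell out in your contrapositive proof of (\ref{e.1})$\Rightarrow$: $\Omega_i\subset K$ is closed in $\mathbb{C}^n$ because $\overline{\Omega_i}\subset K\subset\Omega$ while $\Omega_i$ is closed in $\Omega$; and $\Omega\cap Q\ne\emptyset$ needs the symmetric inclusion $\partial Q\subset K\subset\Omega$, not only $\partial P\subset K$. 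Finally, your closing remark slightly undersells the paper: its converse of (\ref{e.2}) is elementary once (\ref{e.3}) is available; only the converse of (\ref{e.1}) rests on the identity theorem.
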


\begin{proof}
There is no restriction in assuming $K\ne\emptyset$. Throughout this proof the three equivalences from the proposition will be referred to as ``$\stackrel{_{(i)}}\Leftrightarrow$'', with $i\in\{\ref{e.1},\ref{e.2},\ref{e.3}\}$.\\
``$\stackrel{_{(\ref{e.1})}}\Rightarrow$''. Assume $\Omega\setminus K$ is connected. It is easily seen that for all $a\in\Omega$ and $b\in\mathbb{C}^n\setminus K$, there exist $a',b'\in\Omega\setminus K$, such that $[a,a']\subset\Omega$ and $[b,b']\subset\mathbb{C}^n\setminus K$. This yields the connectedness of both sets $\Omega$ and $\mathbb{C}^n\setminus K$, since in $\Omega\setminus K$ any to points can be joined by a polygonal chain.\\
``$\stackrel{_{(\ref{e.1})}}\Leftarrow$''. On the contrary, assume both $\mathbb{C}^n\setminus K$ and $\Omega$ are connected, but $\Omega\setminus K$ is not. Thus $\Omega\setminus K=\Omega_1\cup\Omega_2$, for some disjoint nonempty open sets $\Omega_1,\Omega_2\subset\Omega$. By Theorem~\ref{t.C-companion}(d), the map $f\in\mathcal{H}(\Omega\setminus K)$ defined by $f|_{\Omega_j}\equiv j$ ($j\in\{1,2\}$) has a unique extension $\tilde f\in\mathcal{H}(\Omega)$. For $j\in\{1,2\}$, since $\Omega$ is connected and $\tilde f|_{\Omega_j}=f|_{\Omega_j}\equiv j$, by the identity theorem it follows that $\tilde f\equiv j$. We thus get $1\equiv\tilde f\equiv2$, a contradiction. We conclude that $\Omega\setminus K$ is connected. The first equivalence of the proposition is proved. This also yields ``$\stackrel{_{(\ref{e.3})}}\Leftrightarrow$'', since every $\omega\in\Upsilon_\Omega$ is connected and $\omega\setminus K=\omega\setminus K_\omega$.\\
``$\stackrel{_{(\ref{e.2})}}\Rightarrow$''. If $\mathbb{C}^n\setminus K$ is connected, then so is $\mathbb{C}^n\setminus K_\omega$ for every component $\omega\in\Upsilon_\Omega$, as already shown by the first part (the claim) of the proof of Theorem~\ref{t.C-companion}(d).\\
``$\stackrel{_{(\ref{e.2})}}\Leftarrow$''. Assume $\mathbb{C}^n\setminus K_\omega$ is connected for every $\omega\in\Upsilon_\Omega$. By the third equivalence, all $\omega\setminus K$ are connected. Let us fix $a,b\in\mathbb{C}^n\setminus K$. Let a polygonal chain $\Lambda\subset\mathbb{C}^n$ joining $a$ to $b$. The set $F_\Lambda:=\{\omega\in\Upsilon_\Omega\,|\,\Lambda\cap K_\omega\ne\emptyset\}$ is finite, since the compact $\Lambda\cap K\subset\Omega=\bigcup_{\omega\in\Upsilon_\Omega}\omega$ may be covered by finitely many $\omega\in\Upsilon_\Omega$, which are disjoint. Now let us choose $\Lambda$ for which the cardinality of $F_\Lambda$ is the smallest possible. We claim that $F_\Lambda=\emptyset$. On the contrary, suppose there exists $\omega\in F_\Lambda\ne\emptyset$. There is a path $\gamma:[0,1]\rightarrow\mathbb{C}^n$, such that $\gamma(0)=a$, $\gamma(1)=b$, and $\gamma([0,1])=\Lambda$. As $\Lambda\cap K_\omega$ is compact, $\omega$ is open, and $a,b\in\mathbb{C}^n\setminus K_\omega$, there exist $t,s\in\,]0,1[$, such that $t<s$ and
\[\gamma(t),\gamma(s)\in\omega\setminus K,\qquad\gamma([0,t])\cup\gamma([s,1])\subset\mathbb{C}^n\setminus K_\omega.\]
As $\omega\setminus K$ is connected, there is a polygonal chain $\Lambda_\omega\subset\omega\setminus K$ joining $\gamma(t)$ to $\gamma(s)$. The polygonal chain $\Psi=\gamma([0,t])\cup\Lambda_\omega\cup\gamma([s,1])$ joins $a$ to $b$ and $F_\Psi\subset F_\Lambda\setminus\{\omega\}$, which contradicts the choice of $\Lambda$. Hence $F_\Lambda=\emptyset$, that is, $\Lambda\subset\mathbb{C}^n\setminus K$. We thus conclude that $\mathbb{C}^n\setminus K$ is connected.
\end{proof}

Summarizing, we can now unify Theorems~\ref{t.Hartogs}, \ref{t.Kugelsatz} and~\ref{t.C-companion}(a,d) as follows:

\begin{theorem}[Hartogs extension/Kugelsatz in $\mathbb{C}^n$]\label{t.Hartogs=}
Let $n\ge2$, an open set $\Omega\subset\mathbb{C}^n$, and a compact subset $K\subset\Omega$. The following four statements are equivalent.
\begin{description}
\item[(i)] Every map $f\in\mathcal{H}(\Omega\setminus K)$ has a (unique) extension $\tilde f\in\mathcal{H}(\Omega)$.
\item[(i')] Every locally constant map $g:\Omega\setminus K\rightarrow\mathbb{C}$ has an extension $\tilde g\in\mathcal{H}(\Omega)$.
\item[(ii)] The restriction $\rho:\mathcal{H}(\Omega)\rightarrow\mathcal{H}(\Omega\setminus K)$ is an isomorphism\footnote{The inverse of $\rho$ is the Hartogs companion operator $H_\mathbb{C}$ from Remark~\ref{r.left}.} of $\mathbb{C}$-algebras.
\item[(iii)] $\mathbb{C}^n\setminus K$ is connected.
\end{description}
In this case the isomorphisms $\rho$ and $H_\mathbb{C}$ are range preserving, that is,
\[f(\Omega)=f(\Omega\setminus K),\quad\mbox{for every }f\in\mathcal{H}(\Omega).\]
\end{theorem}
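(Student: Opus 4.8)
\textbf{Proof plan for Theorem~\ref{t.Hartogs=}.}

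The plan is to establish the cycle of implications (iii)$\Rightarrow$(ii)$\Rightarrow$(i)$\Rightarrow$(i')$\Rightarrow$(iii), and then derive the range-preserving statement separately. The implication (iii)$\Rightarrow$(ii) is essentially contained in Theorem~\ref{t.C-companion}: if $\mathbb{C}^n\setminus K$ is connected, then by part (d) the Hartogs companion $\tilde f$ of any $f\in\mathcal{H}(\Omega\setminus K)$ satisfies $\tilde f|_{\Omega\setminus K}=f$, so $\rho$ is surjective; it is injective by the identity theorem applied componentwise on $\Omega$ (each component meets $\Omega\setminus K$ since removing $K$ from $\omega$ cannot disconnect it from anything — more carefully, $\omega\setminus K=\omega\cap K_\omega^{\mathrm u}$ which is nonempty because $K_\omega$ is compact); and $\rho$ is visibly a $\mathbb{C}$-algebra morphism, with inverse $H_\mathbb{C}$ by Remark~\ref{r.left}. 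The implications (ii)$\Rightarrow$(i) and (i)$\Rightarrow$(i') are trivial: surjectivity of $\rho$ is exactly statement (i) with uniqueness supplied again by the componentwise identity theorem, and every locally constant $g$ on $\Omega\setminus K$ is holomorphic there, so (i') is a special case of (i).

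The substantive step is (i')$\Rightarrow$(iii), and this is where I expect the main work to lie; it is the genuine converse to the Hartogs Kugelsatz. I would argue by contraposition: suppose $\mathbb{C}^n\setminus K$ is disconnected. It has exactly one unbounded component $K^{\mathrm u}$ together with at least one bounded component; pick a bounded component $V$. Since $V$ is open and $V\subset\mathbb{C}^n\setminus K$ but $V\not\subset\Omega\setminus K$ need not hold — in fact $V$ need not lie inside $\Omega$ at all. The cleaner route is to pass to a single component of $\Omega$: by equivalence (\ref{e.3}) of Proposition~\ref{p.connectedness}, disconnectedness of $\mathbb{C}^n\setminus K$ is equivalent to $\omega\setminus K$ being disconnected for some $\omega\in\Upsilon_\Omega$. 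Fix such an $\omega$ and write $\omega\setminus K=\Omega_1\sqcup\Omega_2$ as a disjoint union of nonempty open sets. Define $g:\Omega\setminus K\to\mathbb{C}$ to be locally constant, equal to $0$ on $\Omega_1$ and on all other components of $\Omega\setminus K$, and equal to $1$ on $\Omega_2$. If $g$ had a holomorphic extension $\tilde g\in\mathcal{H}(\Omega)$, then $\tilde g|_\omega$ would be holomorphic on the connected open set $\omega$ and agree with $g$ on the open subset $\Omega_1$, forcing $\tilde g|_\omega\equiv0$ by the identity theorem, yet it also agrees with $g\equiv1$ on the open set $\Omega_2\subset\omega$ — a contradiction. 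Hence (i') fails.

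Finally, the range-preserving conclusion: assuming the equivalent conditions hold, fix $f\in\mathcal{H}(\Omega)$ and set $h:=\rho(f)=f|_{\Omega\setminus K}\in\mathcal{H}(\Omega\setminus K)$. By Theorem~\ref{t.C-companion}(a) the Hartogs companion $\tilde h$ satisfies the range inclusion $\tilde h(\Omega)\subset h(\Omega\setminus K)$, and since $f$ is itself an extension of $h$ from $\mathcal{H}(\Omega)$, uniqueness gives $\tilde h=f$; thus $f(\Omega)\subset f(\Omega\setminus K)$, and the reverse inclusion $f(\Omega\setminus K)\subset f(\Omega)$ is obvious, so $f(\Omega)=f(\Omega\setminus K)$. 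This holds for every $f\in\mathcal{H}(\Omega)$, which is the asserted range invariance of both $\rho$ and $H_\mathbb{C}$. The only mild subtlety throughout is repeatedly checking that each component $\omega\in\Upsilon_\Omega$ meets $\Omega\setminus K$ so that the identity theorem has an open set to bite on; this follows because $K_\omega=K\cap\omega$ is compact in $\mathbb{C}^n$, hence $\mathbb{C}^n\setminus K_\omega$ has a nonempty unbounded component, and intersecting with $\omega$ gives $\omega\cap K_\omega^{\mathrm u}=\omega\setminus K_\omega=\omega\setminus K\ne\emptyset$ whenever $K_\omega\neq\omega$, while $K_\omega=\omega$ is impossible as $\omega$ is open and $K_\omega$ compact (here $\omega\neq\emptyset$).
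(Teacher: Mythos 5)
Your proposal is correct and follows essentially the same route as the paper: the implications (iii)$\Rightarrow$(ii)$\Rightarrow$(i)$\Rightarrow$(i') are routine, the substantive converse (i')$\Rightarrow$(iii) is proved by contraposition via equivalence (\ref{e.3}) of Proposition~\ref{p.connectedness} and a two-valued locally constant map contradicting the identity theorem on the connected component $\omega$, and the range identity comes from recognizing $f$ as the Hartogs companion of $f|_{\Omega\setminus K}$ and invoking (\ref{e.range}). The only nitpick is that the identity $\omega\cap K_\omega^{\mathrm u}=\omega\setminus K_\omega$ holds only when $\mathbb{C}^n\setminus K_\omega$ is connected, but you use it only in the (iii)$\Rightarrow$(ii) direction where that is guaranteed, and the nonemptiness $\omega\setminus K\ne\emptyset$ needed elsewhere is correctly obtained from the fact that a nonempty open $\omega\subset\mathbb{C}^n$ cannot be compact.
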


\begin{proof}
Since the implications (iii)$\Rightarrow$(ii)$\Rightarrow$(i)$\Rightarrow$(i') are clear, we only need to prove that (i')$\Rightarrow$(iii). Assume (i') holds, but $\mathbb{C}^n\setminus K$ is disconnected. By Proposition~\ref{p.connectedness}, $\omega\setminus K=\omega_1\cup\omega_2$ for some $\omega\in\Upsilon_\Omega$ and disjoint nonempty open sets $\omega_1,\omega_2\subset\omega\setminus K$. Let a locally constant map $g:\Omega\setminus K\rightarrow\mathbb{C}$, such that $g|_{\omega_j}\equiv j$ for $j\in\{1,2\}$. According to (i'), $g$ has an extension $\tilde g\in\mathcal{H}(\Omega)$. Since $\omega$ is connected, by the identity theorem we get $1\equiv\tilde g|_\omega\equiv2$, a contradiction. Hence $\mathbb{C}^n\setminus K$ is connected. We thus have proved the equivalence of the four statements. By Theorem~\ref{t.C-companion}(a) we see that every $f\in\mathcal{H}(\Omega)$ is the Hartogs companion of its restriction $h:=f|_{\Omega\setminus K}$, which leads by (\ref{e.range}) to $f(\Omega)=\tilde h(\Omega)\subset h(\Omega\setminus K)=f(\Omega\setminus K)\subset f(\Omega)$. Hence $f(\Omega)=f(\Omega\setminus K)$.
\end{proof}

\begin{remark}
The existence of a holomorphic extension for every map from $\mathcal{H}(\Omega\setminus K)$ depends solely on the connectedness of $\mathbb{C}^n\setminus K$, and hence on the compact set $K$ (the larger open set $\Omega\supset K$ is irrelevant!). Meanwhile, Hartogs companions may be considered regardless of the existence of holomorphic extensions. Therefore, the Hartogs phenomenon for holomorphic maps is a special case of existence of Hartogs companions.
\end{remark}

\subsection{Hartogs companions in dimension one.}\label{ss.dim=1}

The $1$-companions defined below will be used for proving a very general Hartogs-type extension result (Theorem~\ref{t.outer}).

\begin{definition}[Hartogs $1$-companion]\label{d.companion1}
Let an open set $\Omega\subset\mathbb{C}$ and a compact subset $K\subset\Omega$. Any map $f\in\mathcal{H}(\Omega\setminus K)$ has a \emph{Hartogs $1$-companion} $\tilde f\in\mathcal{H}(\Omega)$ defined as follows: for every bounded open set $D\subset\Omega$ with the boundary $\partial D\subset\Omega$ consisting of finitely many piecewise $\mathcal{C}^1$ Jordan curves, $\tilde f|_D$ is given by
\begin{equation}\label{e.companion1}\tilde f(z):=\frac1{2\pi\mathrm{i}}\int_{\partial D}\frac{f(\zeta)}{\zeta-z}\mathrm{d}\zeta,\quad\mbox{for every }z\in D,\end{equation}
where $\partial D$ is oriented such that $D$ lies to the left of $\partial D$.
\end{definition}

The definition is consistent, since for every fixed $a\in\Omega$, the integral defining $\tilde f(a)$ does not depend of the choice of the set $D\supset K\cup\{a\}$. Indeed, let us consider another bounded open set $D_0\subset\Omega$ with the boundary $\partial D_0\subset\Omega$ as in Definition~\ref{d.companion1}, and such that $\overline D\subset D_0$. For the cycles $\partial D_0,\partial D\subset\Omega_a:=\Omega\setminus(K\cup\{a\})$ (we may view these as cycles by identifying any closed path $\gamma:[0,1]\rightarrow\Omega_a$ whose restriction to $[0,1[$ is injective, with the Jordan curve $\gamma([0,1])\subset\Omega_a$), it is easily seen that
\[\mathrm{Ind}_{\partial D_0}(z)-\mathrm{Ind}_{\partial D}(z)=\left\{\begin{array}{ll}1,&z\in D_0\setminus\overline D,\\[0.5mm] 0,&z\in\mathbb{C}\setminus(\overline D_0\setminus D)\supset\mathbb{C}\setminus\Omega_a.\end{array}\right.\]
Applying Cauchy's theorem (the version from Rudin\,\cite{rudin}, Th.\,10.35, p.218) for the map $g\in\mathcal{H}(\Omega_a)$ defined by $g(\zeta)=\frac{f(\zeta)}{\zeta-a}$ yields $\int_{\partial D_0}g(\zeta)\mathrm{d}\zeta=\int_{\partial D}g(\zeta)\mathrm{d}\zeta$. For any bounded open set $D_1\subset\Omega$ with the boundary $\partial D_1\subset\Omega$ as in Definition~\ref{d.companion1}, choosing $D_0\supset\overline D\cup\overline D_1$ now forces $\int_{\partial D_1}g(\zeta)\mathrm{d}\zeta=\int_{\partial D_0}g(\zeta)\mathrm{d}\zeta=\int_{\partial D}g(\zeta)\mathrm{d}\zeta$. This proves the consistency of the definition of the Hartogs 1-companion $\tilde f$.

For arbitrarily fixed $a\in\Omega$ and $D\supset K\cup\{a\}$ as in Definition~\ref{d.companion1}, the set $G:=D-a$ is $(a,1)$-admissible and the restriction $\tilde f|_{\Omega_{G,1}}$ may be represented as in (\ref{e.representation}), with $u=1\in\mathbb{C}$ (such an integral representation is a common feature of the Hartogs companions in any dimension).

Let us note that whenever $f$ has a holomorphic extension to $\Omega$, that is $\tilde f$.

The Hartogs 1-companion of a holomorphic map with finitely many singularities is the regular part of the map (obtained from it by subtracting the sum of the principal parts of all Laurent series about its singularities):

\begin{example}\label{ex.dim=1}
Let an open set $\Omega\subset\mathbb{C}$, a finite subset $S\subset\Omega$, and $f\in\mathcal{H}(\Omega\setminus S)$. For every singularity $s\in S$, let $f_s\in\mathcal{H}(\mathbb{C}\setminus\{s\})$ denote the map defined by the principal part of the Laurent series of $f$ about $s$. Then
\[\tilde f=f-\sum_{s\in S}f_s\quad\mbox{on }\Omega\setminus S.\]
\end{example}

\noindent Indeed, for $g:=f-\sum_{s\in S}f_s\in\mathcal{H}(\Omega\setminus S)$, all singularities from $S$ are removable. Hence for arbitrarily fixed $z\in\Omega\setminus S$ and open set $D\supset S\cup\{z\}$ as in Definition~\ref{d.companion1}, by Cauchy's integral formula it follows that
\[\tilde f(z)=g(z)+\frac1{2\pi\mathrm{i}}\sum_{s\in S}\int_{\partial D}\frac{f_s(\zeta)}{\zeta-z}\mathrm{d}\zeta=g(z),\]
because every map $\zeta\mapsto\frac{f_s(\zeta)}{\zeta-z}$ belongs to $\mathcal{H}(\mathbb{C}\setminus\{z,s\})$ and has residue $0$ at $\infty$.
\vspace{2mm}

For $1$-companions, coincidence sets or range inclusions as in Theorem~\ref{t.C-companion} do not exist. For instance, for $f\in\mathcal{H}(\mathbb{C}\setminus\{0\})$ defined by $f(z)=\frac1z$, according to the above example we have $\tilde f\equiv0$, but $\tilde f(\mathbb{C})\cap f(\mathbb{C}\setminus\{0\})=\emptyset$.

\section{Hartogs companions in arbitrary dimension}\label{s.infinite}

For complex spaces of infinite dimension, various holomorphy types have been considered, however, all imply G\^ateaux holomorphy (see Definition~\ref{d.holomorphy}). Hence any Hartogs-type extension theorem provides at least a G\^ateaux holomorphic extension, which may have some regularity, depending on that of the extended map. Therefore, we first prove extension results for the weakest holomorphy type (G\^ateaux), and then we show the regularity of such extensions. The extension results will follow from the existence of vector-valued Hartogs companions in arbitrary dimension.

\begin{setting}\label{set.infinite}
\emph{From now on, $X$ denotes a complex vector space with $\dim_\mathbb{C}(X)\ge2$, and $Y\ne\{0\}$ a sequentially complete Hausdorff complex locally convex space.}
\end{setting}

In order to preserve generality, we avoid considering additional structures on $X$. Nonetheless, the main results will be illustrated by corollaries stated in a particular, but more familiar setting: for sets in (and maps between) topological vector spaces. We avoid requiring topological compactness (as in Theorems~\ref{t.Hartogs},\ref{t.Kugelsatz},\ref{t.Hartogs=}) in infinite dimension, since in such Hausdorff spaces compact sets have empty interior.

\subsection{Linear cuts, related topologies, and the identity theorem.}\label{ss.cuts}
We next define some appropriate notions; these derive from the vector space structure of $X$ and will allow us to state our results in full generality.

On linear varieties of finite dimension we always consider the (unique) Euclidean topology. For every $d\in\mathbb{N}^*$, let $\Gamma_d(X)$ denote the set of all complex linear varieties $L\subset X$ of dimension $\dim_\mathbb{C}(L)=d$. For arbitrary integers $n\ge d\ge1$, set
\[\Gamma_{d,n}(X):=\bigcup_{k=d}^n\Gamma_k(X),\qquad\Gamma_{d,\infty}(X):=\bigcup_{k\ge d}\Gamma_k(X).\]
A subset $A\subset X$ is said to be \emph{parallel to} $u\in X$ (we write this as $A\parallel u$), if and only if $A+\mathbb{C}\cdot u=A$. If $L\subset X$ is a linear variety, $L\parallel u$ is equivalent to $L+u=L$.

\begin{definition}[linear cuts and related notions]\label{d.d-topology}
Let $d\in\mathbb{N}^*$ and $A\subset X$.
\begin{description}
\item[(a)] For every linear variety $L\subset X$, the intersection $A_L:=A\cap L$ is called the \emph{$L$-cut of $A$}. If $L\in\Gamma_d(X)$, we also call $A_L$ a \emph{$d$-cut of $A$}, and if $a\in A_L$, then $A_L$ is said to be a \emph{$d$-cut of $A$ through $a$}. The set $A$ is called
\begin{description}
\item[(i)] \emph{$d$-open}, if and only if\footnote{To cover the case when $d>\dim_\mathbb{C}(X)$, we need to consider cuts of dimension $d$ and lower.} every cut $A_L$ with $L\in\Gamma_{1,d}(X)$ is open in $L$. In the same way we define \emph{$d$-closed/bounded/compact} sets.
\item[(i')] \emph{finitely open}, if and only if $A$ is $d$-open for every $d\in\mathbb{N}^*$. We define \emph{finitely closed/bounded/compact} sets in the same way.
\item[(ii)] \emph{polygonally connected}, if and only if any to points from $A$ can be joined by a polygonal chain $\Lambda\subset A$.
\end{description}
\item[(b)] A map $f:A\rightarrow Y$ is called \emph{$d$-continuous}, if and only if the restriction $f|_{A_L}$ is continuous, for every $L\in\Gamma_{1,d}(X)$. Set
\[\mathcal{C}_{(d)}(A,Y):=\{f:A\rightarrow Y\,|\,f\mbox{ is $d$-continuous}\},\quad\mathcal{C}_{(d)}(A):=\mathcal{C}_{(d)}(A,\mathbb{C}).\]
\end{description}
\end{definition}

The above notions do not require any topological structure on $X$. Nonetheless, a subset $A\subset X$ is $d$-open/closed, if and only if $A$ is open/closed in the translation invariant topology $\tau_d$ defined by all $d$-open subsets of $X$. Thus $X$ becomes a topological space, which will be denoted by $X_{(d)}$. We have $X_{(d)}=\varinjlim_{L\in\Gamma_{1,d}(X)}L$, where the inductive limit is considered in the category of topological spaces. Any subset $A\subset X$ may be considered as a topological subspace $A_{(d)}$ of $X_{(d)}$. We have
\[\mathcal{C}(A_{(d)},Y)\subset\mathcal{C}_{(d)}(A,Y),\]
with equality if $A$ is $d$-open or $d$-closed.

On $X$ we may also consider the \emph{finite open} topology $\tau_\mathrm{f}$ of the inductive limit $X_\mathrm{f}:=\varinjlim_{L\in\Gamma_{1,\infty}(X)}L$, whose open sets are the finitely open sets. The finite open topology is translation invariant and the multiplication $\mathbb{C}\times X_\mathrm{f}\rightarrow X_\mathrm{f}$ is continuous (see Herv\'e\,\cite{herve}, Prop.\,2.3.4, p.37).

\begin{remark}
In $X_\mathrm{f}$ the origin has a neighborhood base consisting of balanced (hence connected) finitely open sets.

For finitely open sets $\tau_\mathrm{f}$-connectedness is equivalent to polygonal connectedness, and components of finitely open sets are finitely open.

In Hausdorff topological vector spaces, open/closed/bounded/compact sets are finitely open/closed/bounded/compact, but the converse is false (Example~\ref{ex.d-topology}).
\end{remark}

Even in normed spaces, finitely open sets may not be open, and finitely compact sets may not be closed or bounded:

\begin{example}\label{ex.d-topology}
\begin{description}
\item[(a)] For arbitrary infinite set $T$, let us consider the direct sum normed space $\big(\mathbb{C}^{(T)},\|\,\|_\infty\big)$, a map $\rho:T\rightarrow\,]0,\infty[$, and the sets
\begin{eqnarray*}
\Omega_\rho\!\!\!\!&=&\!\!\!\!\big\{u\in\mathbb{C}^{(T)}\,\big|\,|u|<2\rho\mbox{ pointwise}\big\}\subset\mathbb{C}^{(T)},\\
K_\rho\!\!\!\!&=&\!\!\!\!\big\{u\in\mathbb{C}^{(T)}\,\big|\,|u|\le\rho\mbox{ pointwise}\big\}\subset\Omega_\rho.
\end{eqnarray*}
Then $\Omega_\rho$ is finitely open and finitely bounded. Its subset $K_\rho$ is closed, and hence finitely compact. If $\inf\rho(T)=0$, then $\mathring\Omega_\rho=\emptyset$, and so $\Omega_\rho$ is not open. If $\sup\rho(T)=\infty$, then $K_\rho$ is unbounded.
\item[(b)] For arbitrary $p\in\,]0,\infty]$ and $A\subset B_1:=B_\mathbb{C}(0,1)$, in the Fr\'echet space $\ell_\mathbb{C}^p$ (which is a Banach space, if $p\ge1$), the subset\footnote{We take by convention $z^0=1$ for every $z\in\mathbb{C}$.}
\[K_A=\{(z^n)_{n\in\mathbb{N}}\,|\,z\in A\}\subset\ell_\mathbb{C}^p\]
is finitely compact (all $d$-cuts of $K_A$ are finite sets, for every $d\in\mathbb{N}^*$), and $\ell_\mathbb{C}^p\setminus K_A$ is finitely open. Furthermore, $K_A$ is closed (resp. bounded), if and only if $\overline A\cap B_1\subset A$ (resp. $\overline A\subset B_1$). For $A=B_1\cap\mathbb{Q}$, the set $K_A$ is finitely compact, but neither closed, nor bounded, and hence $\ell_\mathbb{C}^p\setminus K_A$ is not open.
\item[(c)] For arbitrary integers $n>d\ge1$, polynomials $p_1,\dots,p_{d+1}\in\mathbb{C}[Z]\setminus\mathbb{C}$ with mutually distinct degrees and $\deg p_1=1$, and set $A\subset\mathbb{C}$, the subset
\[K_A=\{(p_1(z),\dots,p_{d+1}(z),0,\dots,0)\in\mathbb{C}^n\,|\,z\in A\}\subset\mathbb{C}^n\]
is $d$-compact (all $d$-cuts of $K_A$ are finite sets), and $\mathbb{C}^n\setminus K_A$ is $d$-open. Furthermore, $K_A$ is $(d+1)$-closed (resp. $(d+1)$-bounded), if and only if $A$ is closed (resp. bounded). For $A=\mathbb{Q}$, the set $K_\mathbb{Q}$ is $d$-compact, but neither $(d+1)$-closed, nor $(d+1)$-bounded, and hence $\mathbb{C}^n\setminus K_\mathbb{Q}$ is not $(d+1)$-open.
\end{description}
The above (with $d\ge2$ in \textup{(c)}) are examples of sets $K\subset\Omega$ as in Theorem~\ref{t.Y-companion}.
\end{example}

\begin{definition}[G\^ateaux holomorphy, holomorphy, the topologies $\tau_{\mathrm{k}(d)}$ and $\tau_\mathrm{k}$]\label{d.G-holomorphy}{\ }
\begin{description}
\item[(a)] A map $f:\Omega\rightarrow Y$ defined on a $1$-open set\footnote{The usual definition of G\^ateaux holomorphy requires the set $\Omega$ to be finitely open.} $\Omega\subset X$ is called \emph{G\^ateaux holomorphic}, if and only if for all $a\in\Omega$, $v\in X$, and $\varphi\in Y^*$ (the continuous dual of $Y$), there exists $r>0$, such that the map
\[B_\mathbb{C}(0,r)\ni\lambda\mapsto(\varphi\circ f)(a+\lambda v)\in\mathbb{C}\]
is holomorphic. Let $\mathcal{H}_\mathrm{G}(\Omega,Y)$ denote the complex vector space consisting of all such $Y$-valued maps on $\Omega$. Set $\mathcal{H}_\mathrm{G}(\Omega):=\mathcal{H}_\mathrm{G}(\Omega,\mathbb{C})$. Obviously,
\[f\in\mathcal{H}_\mathrm{G}(\Omega,Y)\iff\varphi\circ f\in\mathcal{H}_\mathrm{G}(\Omega) \ \mbox{ for every }\varphi\in Y^*.\]
\item[(b)] If $\Omega$ is an open subset of a Hausdorff complex locally convex space, we may consider the vector spaces of all \emph{holomorphic functions}
\[\mathcal{H}(\Omega,Y):=\{f\in\mathcal{H}_\mathrm{G}(\Omega,Y)\,|\,f\mbox{ is continuous}\},\quad\mathcal{H}(\Omega):=\mathcal{H}(\Omega,\mathbb{C}).\]
\item[(c)] For linear variety $L\subset X$ and $1$-open subset $D\subset L$, we define in the natural way\footnote{Every linear variety is a translated vector subspace.} the vector spaces $\mathcal{H}_\mathrm{G}(D,Y)$ and $\mathcal{H}_\mathrm{G}(D)$. For $L\in\Gamma_{1,\infty}(X)$ and open subset $D\subset L$, we may write these spaces as $\mathcal{H}(D,Y)$ and $\mathcal{H}(D)$, respectively.
\item[(d)] Let a fixed integer $d\ge2$. If $\Omega\subset X$ is $d$-open, then $\mathcal{H}_\mathrm{G}(\Omega,Y)_{\mathrm{k}(d)}$ is a sequentially complete Hausdorff locally convex space, with the topology $\tau_{\mathrm{k}(d)}$ defined by the seminorms
\[p_M:\mathcal{H}_\mathrm{G}(\Omega,Y)\rightarrow\mathbb{R_+},\qquad p_M(f)=\sup_{x\in M}p(f(x))=\max p(f(M)),\]
considered for all continuous seminorms $p:Y\rightarrow\mathbb{R_+}$ and compact subsets $M\subset\Omega_L$, with $L\in\Gamma_{2,d}(X)$. If $Y$ is complete, then so is $\mathcal{H}_\mathrm{G}(\Omega,Y)_{\mathrm{k}(d)}$.
\item[(e)] If $\Omega\subset X$ is finitely open, then $\mathcal{H}_\mathrm{G}(\Omega,Y)_\mathrm{k}$ is a sequentially complete Hausdorff locally convex space, with the topology $\tau_\mathrm{k}$ of uniform convergence on all finite dimensional compact subsets $M\subset\Omega$. This topology is defined by all seminorms as in \textup{(d)}, with $L\in\Gamma_{2,\infty}(X)$. If $Y$ is complete, then so is $\mathcal{H}_\mathrm{G}(\Omega,Y)_\mathrm{k}$.
\end{description}
\end{definition}

If $\Omega\subset X$ is $d$-open, then $\mathcal{H}_\mathrm{G}(\Omega,Y)\subset\mathcal{C}_{(d)}(\Omega,Y)$ (Dineen\,\cite{dineen1}, Lemma~2.3, p.54), and so the seminorms $p_M$ from the above definition are well-defined. Every map $f\in\mathcal{H}_\mathrm{G}(\Omega,Y)$ is G\^ateaux differentiable, that is, the limit $\lim_{\mathbb{C}\ni\lambda\to0}\frac{f(a+\lambda v)-f(a)}\lambda$ exists in $Y$ for all $a\in\Omega$ and $v\in X$ (Dineen\,\cite{dineen2}, Lemma~3.3, p.149). The vector space $\mathcal{H}_\mathrm{G}(\Omega,Y)$ is also a $\mathcal{H}_\mathrm{G}(\Omega)$-module.

The construction of Hartogs companions in arbitrary dimension (Theorem~\ref{t.Y-companion}) and the proofs of the generalized Kugelsatz and of several Hartogs-type extension results require a more special identity theorem.

\begin{theorem}[identity]\label{t.identity}
Let a polygonally connected $2$-open set $\Omega\subset X$ and a subset $C\subset\Omega$, such that $C-c$ is real-absorbing\footnote{For every $x\in X$, there exists $\varepsilon>0$, such that $[0,\varepsilon]\cdot x\subset C-c$ (this holds if $C$ is $1$-open).} for some $c\in C$. Then
\[f(\Omega)-f(c)\subset\overline{\mathrm{Sp}}(f(C)-f(c)),\quad\mbox{for every }f\in\mathcal{H}_\mathrm{G}(\Omega,Y)\]
(in particular, $f\equiv0$ if and only if $f|_C\equiv0$).
\end{theorem}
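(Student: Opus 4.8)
The statement is a ``Gâteaux'' analogue of the classical identity theorem, so the plan is to reduce everything to the one-variable situation via the defining property of $\mathcal{H}_\mathrm{G}$, and to propagate the conclusion along polygonal chains. Fix $f\in\mathcal{H}_\mathrm{G}(\Omega,Y)$ and put $V:=\overline{\mathrm{Sp}}(f(C)-f(c))$, a closed subspace of $Y$. Since $Y$ is Hausdorff locally convex, $V$ is the intersection of the kernels of the continuous functionals annihilating it; hence the claim $f(\Omega)-f(c)\subset V$ is equivalent to: for every $\varphi\in Y^*$ with $\varphi|_V=0$ we have $\varphi(f(x))=\varphi(f(c))$ for all $x\in\Omega$. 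Fixing such a $\varphi$ and replacing $f$ by $g:=\varphi\circ f-\varphi(f(c))\in\mathcal{H}_\mathrm{G}(\Omega)$, the task becomes the scalar statement: if $g|_{C}\equiv 0$ then $g\equiv 0$ on $\Omega$. Note $g$ is $2$-continuous (indeed $1$-continuous) by the remark after Definition~\ref{d.G-holomorphy} that $\mathcal{H}_\mathrm{G}(\Omega,Y)\subset\mathcal{C}_{(d)}(\Omega,Y)$.

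\textbf{Key step: local vanishing near $c$.} First I would show $g$ vanishes on a $2$-cut neighbourhood of $c$. Because $C-c$ is real-absorbing, for every direction $v\in X$ there is $\varepsilon>0$ with $[0,\varepsilon]\cdot v\subset C-c$, i.e. $c+tv\in C$ for $t\in[0,\varepsilon]$, so $g(c+tv)=0$ on a real segment. Gâteaux holomorphy gives a holomorphic function $\lambda\mapsto g(c+\lambda v)$ on some disc $B_\mathbb{C}(0,r)$; it vanishes on the real interval $[0,\varepsilon]$ (shrinking $r$ if needed), which has a limit point, so by the one-variable identity theorem $g(c+\lambda v)=0$ for all $\lambda\in B_\mathbb{C}(0,r)$. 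Thus $g$ vanishes on $c+B_\mathbb{C}(0,r_v)\cdot v$ for each $v$. To upgrade this to vanishing on an open neighbourhood of $c$ inside some $2$-plane $L\ni c$, I would fix a plane $L=c+\mathbb{C}u_1+\mathbb{C}u_2\in\Gamma_2(X)$ with $L\subset\Omega$ locally near $c$; the cut $g|_{\Omega_L}$ is a Gâteaux-holomorphic, hence (by the cited embedding) continuous — indeed holomorphic in the usual sense on the $1$-open, in fact open, subset $\Omega_L\subset L\cong\mathbb{C}^2$ — function that vanishes on every complex line through $c$ in $L$ (at least near $c$). A function holomorphic near $0\in\mathbb{C}^2$ vanishing on every complex line through $0$ is identically $0$ near $0$ (expand in homogeneous terms, or apply the one-variable identity theorem to $\lambda\mapsto g(c+\lambda w)$ for each $w$ and read off that all Taylor coefficients vanish). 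Hence $g$ vanishes on a relatively open neighbourhood of $c$ in each such $L$, i.e. on a $2$-open neighbourhood of $c$ in $\Omega$.

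\textbf{Propagation along polygonal chains.} Let $Z:=\{x\in\Omega:\ g\text{ vanishes on a }2\text{-open neighbourhood of }x\text{ in }\Omega\}$; it is $2$-open by construction and contains $c$, so it is nonempty. I claim $Z$ is ``polygonally saturated'': if $a\in Z$ and $[a,b]\subset\Omega$ then $b\in Z$. Given such a segment, parametrise it as $t\mapsto a+t(b-a)$, $t\in[0,1]$, and let $t_0:=\sup\{t:\ a+s(b-a)\in Z\text{ for all }s\in[0,t]\}$; since $Z$ is $2$-open near $a$ and the map is continuous into $\Omega_{(2)}$ (or directly: holomorphy along the line $\lambda\mapsto g(a+\lambda(b-a))$ near each point of the segment), $t_0>0$. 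Put $p:=a+t_0(b-a)\in\Omega$. For any $v\in X$ the function $\lambda\mapsto g(p+\lambda v)$ is holomorphic near $0$; taking $v=b-a$, it vanishes on a left real interval accumulating at $0$, so it vanishes near $0$; and then, repeating the ``vanishes on every line through $p$'' argument — using that for $t$ slightly below $t_0$ the point $a+t(b-a)$ lies in $Z$, hence $g$ vanishes on a whole $2$-neighbourhood of it, which forces all directional Taylor data of the holomorphic cut $g|_{\Omega_L}$ at $p$ (for $L$ containing the segment) to vanish — one gets $p\in Z$. The same local argument pushes $t_0$ past itself unless $t_0=1$, so $t_0=1$ and $b\in Z$. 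Iterating over the edges of a polygonal chain, and using that $\Omega$ is polygonally connected, we get $Z=\Omega$, hence $g\equiv0$ on $\Omega$. Undoing the reduction (ranging over all $\varphi\in Y^*$ annihilating $V$) yields $f(\Omega)-f(c)\subset V$; the parenthetical ``$f\equiv0\iff f|_C\equiv0$'' is the special case $f(c)=0$, $V=\{0\}$.

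\textbf{Main obstacle.} The one genuinely delicate point is the jump from ``$g$ vanishes on every complex line through a point'' to ``$g$ vanishes on a $2$-dimensional neighbourhood of that point''. In finite dimension this is the standard fact that a holomorphic germ on $\mathbb{C}^2$ killed by all lines through the origin is $0$; here one must be slightly careful that the $2$-cut $g|_{\Omega_L}$ is genuinely holomorphic on an honest open subset of $L\cong\mathbb{C}^2$ and that ``real-absorbing'' supplies enough directions $v$ with real segments in $C-c$ to conclude the vanishing along each complex line (a real segment accumulating at $0$ suffices, by the one-variable identity theorem). Everything else — the $Y^*$-reduction and the connectedness propagation — is routine, modelled on the classical proof, and uses only Hahn–Banach and the $1$-variable identity theorem together with the embedding $\mathcal{H}_\mathrm{G}(\Omega,Y)\subset\mathcal{C}_{(2)}(\Omega,Y)$ quoted in the text.
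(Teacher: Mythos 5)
Your reduction to the scalar case via functionals annihilating $V=\overline{\mathrm{Sp}}(f(C)-f(c))$ is fine (it is the dual of the paper's quotient-by-$Y_0$ argument), and your ``key step'' (vanishing on a disc along every complex line through $c$ by the one-variable identity theorem, then vanishing near $c$ inside each $2$-plane $L\ni c$ by the homogeneous expansion of the holomorphic cut $g|_{\Omega_L}$) is correct. The genuine gap is the sentence ``$g$ vanishes on a relatively open neighbourhood of $c$ in each such $L$, i.e.\ on a $2$-open neighbourhood of $c$'': this is a non sequitur under Definition~\ref{d.d-topology}. A $2$-open set must have open cuts with \emph{every} $L\in\Gamma_{1,2}(X)$, not only with planes through $c$; the union $\bigcup_L W_L$ of your plane-neighbourhoods fails this, since its cut with a plane $L'$ not through $c$ is a union of relatively open pieces of the lines $L\cap L'$, which is not open in $L'$. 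So $c\in Z$ is not established, and the same unjustified leap is needed again to get $p\in Z$ and to ``push $t_0$ past itself''; the whole $Z$-based propagation therefore does not close as written (the intermediate claim is only true a posteriori, once the theorem is proved).

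The fix is to drop the $\tau_{(2)}$ bookkeeping and work with the pointwise property $P(x)$: ``$g$ vanishes on a neighbourhood of $x$ in $\Omega_L$ for every $2$-plane $L\ni x$''. Your key step gives $P(c)$, and $P$ propagates along each edge $[a,b]\subset\Omega$ of a polygonal chain without any sup argument: for every plane $L$ containing $[a,b]$, $g|_{\Omega_L}$ vanishes near $a$ and $a,b$ lie in the same component of $\Omega_L$ (joined by the segment), so the planar identity theorem gives vanishing near $b$ in $L$; since every complex line through $b$ lies in some plane containing the direction $b-a$, you get vanishing on a disc along every line through $b$, and your homogeneous-expansion argument then yields $P(b)$. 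Iterating over the edges finishes the scalar case. For comparison, the paper avoids two-variable holomorphy altogether: it fixes the segment $[c,x]$, notes that $t\mapsto\theta\bigl(f((1-t)c+tx)\bigr)$ is real-analytic on $[0,1]$ for $\theta\in Y_\mathbb{R}^*$, vanishes near $t=0$ because $C-c$ is real-absorbing, hence vanishes on $[0,1]$; the $2$-openness of $\Omega$ is used only to show that $A=\{x\,|\,[c,x]\subset\Omega\}$ is $1$-open, so that $A-a$ is again real-absorbing and the argument can be iterated along the polygonal chain. That route needs neither the continuity of the cuts nor the homogeneous expansion, so it is somewhat more elementary than your (repairable) approach.
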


\begin{proof}
Let us first prove the equivalence. Assume $f|_C\equiv0$. For $c\in C$ as in the theorem, let us consider a linear segment $[c,a]\subset\Omega$. We claim that $f|_A\equiv0$ for some $1$-open subset $A\subset\Omega$, such that $a\in A$ (then $A-a$ is real-absorbing). Set
\[A:=\{x\in X\,|\,[c,x]\subset\Omega\}.\]
Hence $a\in A\subset\Omega$. Suppose there exists a $1$-cut $A_L$, which is not open in $L\in\Gamma_1(X)$. Consequently, there exists a sequence $(x_n)_{n\in\mathbb{N}}\subset L\setminus A$, which converges in $L$ to some $a_0\in A_L\subset A$. Therefore, $[c,a_0]\subset\Omega$. For every $n\in\mathbb{N}$ we have $x_n\notin A$, that is, $\xi_n:=(1-t_n)c+t_nx_n\notin\Omega$ for some $t_n\in[0,1]$. By taking convergent subsequences if necessary, we may assume $\lim_{n\to\infty}t_n=s\in[0,1]$. Clearly, $L\cup\{c\}\subset L'$ for some $L'\in\Gamma_2(X)$. As $\Omega_{L'}$ is open in $L'$ and $(\xi_n)_{n\in\mathbb{N}}\subset L'\setminus\Omega$, a passage to the limit in $L'$ yields $(1-s)c+sa_0\in L'\setminus\Omega$, which contradicts $[c,a_0]\subset\Omega$. We conclude that $A$ is $1$-open, and hence that $A-a$ is a real-absorbing set. In order to show that $f|_A\equiv0$, let us fix $x\in A$ and $\theta\in Y_\mathbb{R}^*$. Thus $[c,x]\subset\Omega$ and the map
\[g:[0,1]\rightarrow\mathbb{R},\qquad g(t)=(\theta\circ f)((1-t)c+tx),\]
is real-analytic. As $C-c$ is a real-absorbing set, we have $[0,\varepsilon]\cdot(x-c)\subset C-c$ for some $\varepsilon\in\,]0,1]$, and so $g|_{[0,\varepsilon]}\equiv0$. By the identity theorem for real-analytic maps we get $g|_{[0,1]}\equiv0$. It follows that $\theta(f(x))=0$ for every $\theta\in Y_\mathbb{R}^*$, which yields $f(x)=0$. Our claim is proved. Since $\Omega$ is polygonally connected, an easy induction (on the number of linear segments from a polygonal chain in $\Omega$ joining $c$ to other points $x\in\Omega$) based on the above claim shows that $f\equiv0$. We thus have proved the equivalence. In order to show the inclusion for $f(\Omega)$, let the map $g:=f-f(c)\in\mathcal{H}_\mathrm{G}(\Omega,Y)$, the closed vector subspace $Y_0:=\overline{\mathrm{Sp}}(g(C))\subset Y$, the quotient Hausdorff locally convex space $Y/Y_0=\{\hat y\,|\,y\in Y\}$, and the standard continuous linear surjection $s:Y\rightarrow Y/Y_0$. Since $s\circ g\in\mathcal{H}_\mathrm{G}(\Omega,Y/Y_0)$ and $(s\circ g)|_C\equiv\hat0$, by the already proved equivalence we get $s\circ g\equiv\hat0$, that is, $g(\Omega)\subset Y_0$. This yields $f(\Omega)-f(c)\subset\overline{\mathrm{Sp}}(f(C)-f(c))$.
\end{proof}

The identity theorem now allows us to prove a much more general vector-valued version of the maximum modulus principle.

\begin{theorem}[maximum norm principle]\label{t.maximum}
Assume $Y$ is a strictly convex normed space. Let a polygonally connected $2$-open set $\Omega\subset X$ and a map $f\in\mathcal{H}_\mathrm{G}(\Omega,Y)$. If\footnote{Here $\|f(\cdot)\|$ denotes the map $\Omega\ni x\mapsto\|f(x)\|\in\mathbb{R}_+$.} $\|f(\cdot)\|$ has a $\tau_{(1)}$-local maximum, then $f$ is constant.
\end{theorem}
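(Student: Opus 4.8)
The plan is to reduce the vector-valued statement to a one-variable, scalar statement by slicing along a single complex line through the point where the local maximum is attained, then invoke the classical one-variable maximum modulus principle together with strict convexity and the identity theorem (Theorem~\ref{t.identity}). First I would let $a\in\Omega$ be a point at which $\|f(\cdot)\|$ has a $\tau_{(1)}$-local maximum, say equal to $M:=\|f(a)\|$. If $M=0$ then $f(a)=0$; since $\Omega$ is $2$-open and polygonally connected and $f\in\mathcal{H}_\mathrm{G}(\Omega,Y)$, each slice map $\lambda\mapsto f(a+\lambda v)$ is holomorphic near $0$ (after composing with any $\varphi\in Y^*$), and a nonzero holomorphic slice cannot have modulus with a local maximum of value $0$, so in fact $\|f(\cdot)\|\equiv 0$ in a $\tau_{(1)}$-neighborhood of $a$, hence $f\equiv0$ on $\Omega$ by Theorem~\ref{t.identity}. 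So assume $M>0$.

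The core step is: for every $v\in X$, the slice $g_v:\lambda\mapsto f(a+\lambda v)$ is $Y$-valued holomorphic on a disk $B_\mathbb{C}(0,r)$, and by hypothesis $\|g_v(\lambda)\|\le M=\|g_v(0)\|$ for $\lambda$ in a (possibly smaller) disk. For any $\varphi\in Y^*$ with $\|\varphi\|\le1$, the scalar function $\varphi\circ g_v$ is holomorphic and satisfies $|\varphi(g_v(\lambda))|\le\|g_v(\lambda)\|\le M$ with equality in modulus at $\lambda=0$ when $\varphi$ is a norming functional for $f(a)$ (such $\varphi$ exists by Hahn--Banach, and is unique in the strictly convex case on the set where the norm is attained — but uniqueness is not needed here). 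Applying the classical maximum modulus principle to $\varphi\circ g_v$ gives that $\varphi\circ g_v$ is constant near $0$; choosing $\varphi$ norming at $f(a)$ we get $\varphi(g_v(\lambda))=M$ for all small $\lambda$, so $\|g_v(\lambda)\|\ge M$, hence $\|g_v(\lambda)\|=M$ for all small $\lambda$. Thus $\|f(\cdot)\|$ is locally constant equal to $M$ near $a$ in the $\tau_{(1)}$ sense. Now strict convexity enters decisively: if $\|f(x)\|=\|f(a)\|=M$ for $x$ in a $\tau_{(1)}$-neighborhood of $a$ and additionally the midpoints have the same norm, strict convexity forces $f(x)=f(a)$; more directly, on each slice $g_v$ with $\|g_v(\lambda)\|\equiv M$, pick $\varphi$ norming at $g_v(0)=f(a)$; then $|\varphi\circ g_v|\equiv M=\|g_v\|$ on a disk, so $\varphi$ is norming at every $g_v(\lambda)$, and by strict convexity the norming functional determines the point on the sphere of radius $M$ uniquely, whence $g_v(\lambda)=f(a)$ for all small $\lambda$. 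Therefore $f$ is constant equal to $f(a)$ on a $1$-open neighborhood of $a$.

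Finally, having $f\equiv f(a)$ on a $1$-open set $A\ni a$, the set $A-a$ is real-absorbing, so Theorem~\ref{t.identity} applied to $f-f(a)\in\mathcal{H}_\mathrm{G}(\Omega,Y)$ (which vanishes on $A$) yields $f\equiv f(a)$ on all of $\Omega$. This completes the proof. The main obstacle is the middle step: converting ``$\|f(\cdot)\|$ locally constant'' into ``$f$ locally constant,'' which is exactly where strict convexity is indispensable and where one must be careful that the one-variable maximum modulus argument on the slices $g_v$ delivers constancy of the full $Y$-valued slice, not merely of its norm; the reduction to slices and the final globalization via Theorem~\ref{t.identity} are routine by comparison.
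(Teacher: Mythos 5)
Your proposal is correct and takes essentially the same route as the paper's proof: a Hahn--Banach norming functional at $f(a)$, the classical one-variable maximum modulus principle applied to $\varphi\circ f$ on complex lines through $a$, strict convexity in the form ``a norm-one functional attains its norm at most at one point of the sphere'' to recover $f(x)=f(a)$, and Theorem~\ref{t.identity} to globalize from a real-absorbing set. The only difference is organizational: the paper first isolates the scalar case $Y=\mathbb{C}$ and then applies it to $\varphi\circ f$ on all of $\Omega$, whereas you argue slice-by-slice near $a$, which does not change the substance.
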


\begin{proof}
We have divided the proof into two steps.\\
\emph{Step 1}. Let us first show the theorem for $Y=\mathbb{C}$. For $f\in\mathcal{H}_\mathrm{G}(\Omega)$, assume that $|f|$ has a $\tau_{(1)}$-local maximum at $c\in\Omega$. Set $\Gamma_c=\{L\in\Gamma_1(X)\,|\,c\in L\}$. For every $L\in\Gamma_c$, we have $|f(c)|=\max_{x\in C_L}|f(x)|$ for some connected open neighborhood $C_L\subset\Omega_L$ of $c$ in $L$. As $f|_{C_L}\in\mathcal{H}(C_L)$, by the classical maximum modulus principle we deduce that $f|_{C_L}\equiv f(c)$. Set $C:=\bigcup_{L\in\Gamma_c}C_L\subset\Omega$. Since $C-c$ is an absorbing set and $f|_C\equiv f(c)$, Theorem~\ref{t.identity} now yields $f\equiv f(c)$.\\
\emph{Step 2}. In the general case, assume $\|f(\cdot)\|$ has a $\tau_{(1)}$-local maximum at $c\in\Omega$ and set $y_0:=f(c)$. There is a $1$-open neighborhood $C\subset\Omega$ of $c$ in $X_{(1)}$, such that
\begin{equation}\label{e.max}\|f(x)\|\le\|y_0\|,\quad\mbox{for every }x\in C.\end{equation}
Hence $C-c$ is an absorbing set. According to the Hahn-Banach theorem, there exists $\varphi\in Y^*\setminus\{0\}$, such that $\varphi(y_0)=\|y_0\|$ and $|\varphi(y)|\le\|y\|$ for every $y\in Y$. Set $g:=\varphi\circ f\in\mathcal{H}_\mathrm{G}(\Omega)$. For every $x\in C$, we have
\[|g(x)|=|\varphi(f(x))|\le\|f(x)\|\le\|y_0\|=|\varphi(y_0)|=|g(c)|,\]
and so $|g|$ has a $\tau_{(1)}$-local maximum at $c$. By the already proved theorem for $Y=\mathbb{C}$ it follows that $g\equiv g(c)$, that is, $f(\Omega)\subset y_0+\ker\varphi$. This together with (\ref{e.max}) and the strict convexity of $Y$ lead to $f(C)\subset(y_0+\ker\varphi)\cap\overline B_Y(0,\|y_0\|)=\{y_0\}$, and hence to $f|_C\equiv y_0$. By Theorem~\ref{t.identity} we conclude that $f\equiv y_0$.
\end{proof}

In the above theorem the strict convexity of $Y$ cannot be dropped:

\begin{example}\label{ex.maximum}
Let us consider a complex vector space $Y$ of dimension at least $2$, a norm $\nu:Y\rightarrow\mathbb{R}_+$, a vector $y_0\in Y$ and $\varphi\in Y^*$ such that $\varphi(y_0)=1$, and the equivalent norm on $Y$ defined by $\|y\|=\max\{\nu(y-\varphi(y)y_0),|\varphi(y)|\}$. For fixed $n\in\mathbb{N}^*$, let $T\in\mathcal{L}(\mathbb{C}^n,\ker\varphi)\setminus\{0\}$ and $f\in\mathcal{H}(\mathbb{C}^n,Y)$ defined by $f(x)=T(x)+y_0$. Then $\|f(\cdot)\|=\max\{\nu\circ T,1\}$ has a local maximum and a global minimum at $c=0$. Nonetheless, both maps $f$ and $\|f(\cdot)\|$ are nonconstant.
\end{example}

Without the strict convexity of $Y$ from Theorem~\ref{t.maximum}, we still can show that if $\|f(\cdot)\|$ has a $\tau_{(1)}$-local maximum $c\in\Omega$, then $c$ is also a global minimum and $f(\Omega)$ has empty interior (and therefore is not a domain), and if $f(c)\ne0$, then $f$ vanishes nowhere on $\Omega$. This still holds with the norm replaced by a continuous seminorm:

\begin{theorem}[max-min seminorm principle]\label{t.max-min}
Let a polygonally connected $2$-open set $\Omega\subset X$, a map $f\in\mathcal{H}_\mathrm{G}(\Omega,Y)$, and a continuous seminorm $p\not\equiv0$ on $Y$. Assume that $c\in\Omega$ is a $\tau_{(1)}$-local maximum for $p\circ f$. Then $c$ is also a global minimum and $\mathring{\overline{f(\Omega)}}=\emptyset$. In particular, if $p(f(c))>0$, then $0\notin f(\Omega)$.
\end{theorem}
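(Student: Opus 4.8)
The plan is to deduce everything from the scalar maximum principle already contained in the proof of Theorem~\ref{t.maximum} (its case $Y=\mathbb{C}$, which literally applies since $\mathbb{C}$ is a strictly convex normed space) together with the identity theorem (Theorem~\ref{t.identity}), distinguishing the cases $M>0$ and $M=0$, where $M:=p(f(c))\ge0$. In both cases the hypothesis provides a $1$-open neighbourhood $C\subset\Omega$ of $c$ with $p(f(x))\le M$ for all $x\in C$; since $C$ is $1$-open, $C-c$ is (real-)absorbing, which is exactly what will let us invoke Theorems~\ref{t.maximum} and~\ref{t.identity}.

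Assume first $M>0$, so $f(c)\ne0$. Applying the complex Hahn--Banach theorem to the seminorm $p$ and to the functional $\lambda f(c)\mapsto\lambda M$ on $\mathrm{Sp}\{f(c)\}$, I would obtain $\varphi\in Y^*\setminus\{0\}$ with $\varphi(f(c))=M$ and $|\varphi(y)|\le p(y)$ for every $y\in Y$; continuity of $\varphi$ is automatic, being dominated by the continuous seminorm $p$. Then $g:=\varphi\circ f\in\mathcal{H}_\mathrm{G}(\Omega)$ by Definition~\ref{d.G-holomorphy}(a), and on $C$ we have $|g(x)|\le p(f(x))\le M=|\varphi(f(c))|=|g(c)|$, so $|g|$ has a $\tau_{(1)}$-local maximum at $c$; by Theorem~\ref{t.maximum} (case $Y=\mathbb{C}$) this forces $g\equiv M$. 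Consequently $p(f(x))\ge|\varphi(f(x))|=M=p(f(c))$ for all $x\in\Omega$, so $c$ is a global minimum; moreover $f(\Omega)\subset\varphi^{-1}(M)=f(c)+\ker\varphi$, a proper closed affine hyperplane (as $\varphi\ne0$), whence $\overline{f(\Omega)}\subset f(c)+\ker\varphi$ has empty interior, i.e.\ $\mathring{\overline{f(\Omega)}}=\emptyset$. Finally $p(f(x))\ge M>0$ gives $f(x)\ne0$ for all $x$, so $0\notin f(\Omega)$, proving the ``in particular'' assertion.

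Now suppose $M=0$. Then $c$ is automatically a global minimum of $p\circ f\ge0$. Since $c$ is a $\tau_{(1)}$-local maximum, the neighbourhood $C$ above satisfies $p(f(x))\le0$, hence $p(f(x))=0$, on $C$; thus $f(C)\subset\ker p$, where $\ker p:=\{y\in Y\,|\,p(y)=0\}$ is a closed vector subspace of $Y$, proper because $p\not\equiv0$. As $C-c$ is real-absorbing, Theorem~\ref{t.identity} yields $f(\Omega)-f(c)\subset\overline{\mathrm{Sp}}(f(C)-f(c))\subset\ker p$, and since $f(c)\in\ker p$ we get $f(\Omega)\subset\ker p$; a proper closed subspace has empty interior, so again $\mathring{\overline{f(\Omega)}}=\emptyset$.

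The only points I expect to require care are: that a $\tau_{(1)}$-local maximum genuinely delivers a \emph{$1$-open} neighbourhood $C$ of $c$ on which the inequality holds (without $C-c$ being absorbing, neither the scalar maximum principle nor the identity theorem applies); the correct normalization in the Hahn--Banach step so that $|\varphi|\le p$ and $\varphi(f(c))=M$ simultaneously; and the elementary but essential fact that a proper (affine) subspace of a topological vector space has empty interior, since a subspace containing a neighbourhood of the origin is absorbing and hence the whole space. Everything else is routine bookkeeping.
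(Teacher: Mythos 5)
Your proof is correct and follows essentially the same route as the paper: produce a continuous functional $\varphi$ with $|\varphi|\le p$ and $\varphi(f(c))=p(f(c))$, apply the scalar maximum principle to $\varphi\circ f$, and conclude that $f(\Omega)$ lies in a proper closed affine subspace. The only (cosmetic) difference is that the paper packages the Hahn--Banach step by first passing to the quotient normed space $Y/p^{-1}(\{0\})$ so as to reuse Step~2 of Theorem~\ref{t.maximum} verbatim, whereas you apply Hahn--Banach directly on $Y$ with $p$ as the dominating seminorm and dispose of the degenerate case $p(f(c))=0$ separately via Theorem~\ref{t.identity} --- both handle that case correctly.
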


\begin{proof}
Let us consider the quotient normed space $\hat Y:=Y/{p^{-1}(\{0\})}=\{\hat y\,|\,y\in Y\}$ with the usual norm $\|\hat y\|=p(y)$, the standard linear surjection $s:Y\rightarrow\hat Y$, the map $\hat f:=s\circ f\in\mathcal{H}_\mathrm{G}(\Omega,\hat Y)$, and $y_0:=f(c)$. Thus $\|\hat f(\cdot)\|$ has a $\tau_{(1)}$-local maximum at $c$. As in the proof of Theorem~\ref{t.maximum} (Step 2) we choose $\varphi\in\hat Y^*\setminus\{0\}$, for which we show that $\hat f(\Omega)\subset\hat y_0+\ker\varphi$. Consequently, for every $x\in\Omega$ we have
\[p(f(x))=\|\hat f(x)\|\ge|\varphi(\hat f(x))|=|\varphi(\hat y_0)|=\|\hat y_0\|=p(y_0).\]
Hence $p\circ f$ has a global minimum at $c$. The above inclusion for $\hat f(\Omega)$ is equivalent to $f(\Omega)\subset y_0+\ker(\varphi\circ s)$. As $\varphi\circ s\in Y^*\setminus\{0\}$, we have $\mathring{\overline{f(\Omega)}}=y_0+\mathring{\overline{\ker(\varphi\circ s)}}=\emptyset$. If $p(f(c))>0$, then $0\notin p(f(\Omega))$, and so $0\notin f(\Omega)$.
\end{proof}

\begin{corollary}[minimum modulus principle]\label{c.minimum}
Let a polygonally connected $2$-open set $\Omega\subset X$ and a map $f\in\mathcal{H}_\mathrm{G}(\Omega)$. If $|f|$ has a $\tau_{(1)}$-local minimum at $c\in\Omega$ and $f(c)\ne0$, then $f$ is constant.
\end{corollary}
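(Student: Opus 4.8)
The plan is to reduce the minimum modulus statement to the maximum/max--min machinery just established. First I would observe that since $|f|$ has a $\tau_{(1)}$-local minimum at $c$ and $f(c)\neq 0$, there is a $1$-open neighborhood $C\subset\Omega$ of $c$ in $X_{(1)}$ on which $f$ never vanishes: indeed, by continuity of $f$ along $1$-cuts (G\^ateaux holomorphic maps are $1$-continuous by Dineen's lemma cited above), we can shrink $C$ so that $|f(x)|\geq\frac12|f(c)|>0$ for all $x\in C$. On $C$ the reciprocal $h:=1/f$ is well-defined, and restricting to any line $L\in\Gamma_1(X)$ through a point of $C$, the map $h|_{C_L}$ is holomorphic (being $1/(f|_{C_L})$ with $f|_{C_L}$ holomorphic and zero-free); hence $h\in\mathcal{H}_\mathrm{G}(C,\mathbb{C})$.

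Next I would note that $|h|=1/|f|$ has a $\tau_{(1)}$-local \emph{maximum} at $c$ (on $C$), and since $C$ is $1$-open it is in particular a polygonally connected $2$-open set once we pass to a suitable connected piece — more carefully, I would apply Step 1 of the proof of Theorem~\ref{t.maximum} (the scalar maximum modulus statement) directly to $h$ on a polygonally connected $2$-open neighborhood of $c$ contained in $C$, or simply re-run that argument: for each $L\in\Gamma_c:=\{L\in\Gamma_1(X)\mid c\in L\}$ the classical maximum modulus principle forces $h$ to be locally constant equal to $h(c)$ on a connected open neighborhood of $c$ in $L$, and then Theorem~\ref{t.identity} (applied with the absorbing coincidence set $\bigcup_{L\in\Gamma_c}C_L$) yields $h\equiv h(c)$ on a $1$-open neighborhood of $c$. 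Consequently $f=1/h$ is constant equal to $f(c)$ on that neighborhood, hence $f|_{C'}\equiv f(c)$ for a $1$-open $C'\ni c$ with $C'-c$ absorbing.

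Finally I would invoke Theorem~\ref{t.identity} once more, now for $f$ itself on the whole polygonally connected $2$-open set $\Omega$: since $f$ agrees with the constant $f(c)$ on $C'$ and $C'-c$ is real-absorbing, the equivalence in Theorem~\ref{t.identity} (applied to $f-f(c)$) gives $f\equiv f(c)$ on $\Omega$. This completes the argument.

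\textbf{Main obstacle.} The only delicate point is bookkeeping about topologies: one must make sure that ``$\tau_{(1)}$-local minimum'' really does hand us a genuinely $1$-open zero-free neighborhood $C$ (so that $1/f$ is again G\^ateaux holomorphic there and the hypotheses of the scalar maximum modulus step are met on a set to which Theorem~\ref{t.identity} applies), and that the neighborhood on which we conclude $f$ is locally constant is itself $1$-open with absorbing translate — but all of this is exactly parallel to Step 1 of Theorem~\ref{t.maximum}, so there is no real difficulty beyond transcribing that reasoning for $1/f$.
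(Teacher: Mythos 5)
Your proposal is correct and follows essentially the same route as the paper: pass to $1/f$ (zero-free near $c$ since the local minimum value $|f(c)|$ is positive), apply the classical maximum modulus principle on each line through $c$ to get $f\equiv f(c)$ on a union of line-neighborhoods whose translate by $-c$ is absorbing, and then conclude with Theorem~\ref{t.identity}. The only inessential difference is that the paper inverts $f$ separately on each $1$-cut $C_L$ rather than forming a single G\^ateaux holomorphic reciprocal on a $1$-open set, and your extra shrinking to $|f|\ge\frac12|f(c)|$ is redundant since the local-minimum hypothesis already gives $|f|\ge|f(c)|>0$ there.
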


\begin{proof}
Let us consider $\Gamma_c$ as in the proof of Theorem~\ref{t.maximum}. For every $L\in\Gamma_c$, we have $|f(c)|=\min_{x\in C_L}|f(x)|$ for some connected open neighborhood $C_L\subset\Omega_L$ of $c$ in $L$. As $g:=\frac1{f|_{C_L}}\in\mathcal{H}(C_L)$ and $|g|$ has a maximum at $c$, by the classical maximum modulus principle we get $g\equiv g(c)$, that is, $f|_{C_L}\equiv f(c)$. As in the proof of Theorem~\ref{t.maximum} (Step 1) it follows that $f\equiv f(c)$.
\end{proof}

\subsection{Range inclusions and inertia.}\label{ss.range}

The next result on automatic extensions will be used for proving the range inclusions for Hartogs companions and the continuity of the Hartogs linear operator, as well as for obtaining boundary principles.

\begin{proposition}[range inclusions for G\^ateaux holomorphic extensions]\label{p.range}
Let two $1$-open sets $\Omega_0,\Omega\subset X$ and $C\subset\Omega_0\cap\Omega$. Assume that\footnote{This condition is similar to that from Definition~\ref{d.domain}(b).} for every $f\in\mathcal{H}_\mathrm{G}(\Omega_0,Y)$, the restriction $f|_C$ has a unique extension $\bar f\in\mathcal{H}_\mathrm{G}(\Omega,Y)$. Then
\begin{description}
\item[(a)] The same extension property holds for $\mathbb{C}$-valued maps and
\begin{eqnarray*}
\overline{(h\cdot\!f)}=\bar h\cdot\!\bar f&&\!\!\!\!\!\mbox{for all }h\in\mathcal{H}_\mathrm{G}(\Omega_0),\,f\in\mathcal{H}_\mathrm{G}(\Omega_0,Y),\\
\bar h(\Omega)\subset h(\Omega_0)&&\!\!\!\!\!\mbox{for every }h\in\mathcal{H}_\mathrm{G}(\Omega_0),\\
\bar f(\Omega)\subset\overline{\mathrm{co}}(f(\Omega_0))&&\!\!\!\!\!\mbox{for every }f\in\mathcal{H}_\mathrm{G}(\Omega_0,Y).
\end{eqnarray*}
\item[(b)] If $Y$ is a unital Banach algebra (not necessarily commutative), then
\[H:\mathcal{H}_\mathrm{G}(\Omega_0,Y)\rightarrow\mathcal{H}_\mathrm{G}(\Omega,Y),\qquad H(f)=\bar f,\]
is a unital $\mathbb{C}$-algebra morphism. For every $f\in\mathcal{H}_\mathrm{G}(\Omega_0,Y)$, we have
\[f(\Omega_0)\subset\mathrm{U}(Y) \ \Longrightarrow \ \bar f(\Omega)\subset\mathrm{U}(Y),\]
where $\mathrm{U}(Y)$ denotes the (open) set of all invertible elements of $Y$.
\end{description}
\end{proposition}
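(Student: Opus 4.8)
The engine of the whole argument is that the uniqueness hypothesis forces the assignment $f\mapsto\bar f$ to commute with every operation whose output again lands in $\mathcal{H}_\mathrm{G}$. So I would begin by recording that $H\colon f\mapsto\bar f$ is $\mathbb{C}$-linear and fixes constants: for $f_1,f_2\in\mathcal{H}_\mathrm{G}(\Omega_0,Y)$ and $\lambda\in\mathbb{C}$ the map $\bar f_1+\lambda\bar f_2\in\mathcal{H}_\mathrm{G}(\Omega,Y)$ restricts to $(f_1+\lambda f_2)|_C$ on $C$, so uniqueness gives $\overline{f_1+\lambda f_2}=\bar f_1+\lambda\bar f_2$; likewise $\bar y=y$ for a constant $y\in Y$, since the constant map extends itself. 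Every remaining identity in the statement will be obtained by the same ``uniqueness $\Rightarrow$ compatibility'' move, so the real work is in producing the right auxiliary maps and checking they belong to $\mathcal{H}_\mathrm{G}$.

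For part~(a), to transfer the extension property to $\mathbb{C}$-valued maps I would fix $y_0\in Y\setminus\{0\}$ and, by Hahn--Banach, $\varphi_0\in Y^*$ with $\varphi_0(y_0)=1$. Given $h\in\mathcal{H}_\mathrm{G}(\Omega_0)$, the map $h\,y_0\in\mathcal{H}_\mathrm{G}(\Omega_0,Y)$ has a unique extension $\overline{h\,y_0}$, and $\bar h:=\varphi_0\circ\overline{h\,y_0}\in\mathcal{H}_\mathrm{G}(\Omega)$ extends $h|_C$; uniqueness for scalar maps follows because two scalar extensions $g_1,g_2$ of $h|_C$ give extensions $g_1y_0,g_2y_0$ of $(h\,y_0)|_C$, hence $g_1y_0=g_2y_0$ and $g_1=g_2$ (this also shows $\bar h$ is independent of $y_0,\varphi_0$). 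Since $\mathcal{H}_\mathrm{G}(\Omega_0,Y)$ is a $\mathcal{H}_\mathrm{G}(\Omega_0)$-module (and the same over $\Omega$), $\bar h\cdot\bar f\in\mathcal{H}_\mathrm{G}(\Omega,Y)$ restricts to $(h\cdot f)|_C$, which yields $\overline{h\cdot f}=\bar h\cdot\bar f$.

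Still in (a), for $h\in\mathcal{H}_\mathrm{G}(\Omega_0)$ and $z\in\mathbb{C}\setminus h(\Omega_0)$ the reciprocal $g:=1/(h-z)$ again lies in $\mathcal{H}_\mathrm{G}(\Omega_0)$ (on each complex line it is the reciprocal of a nowhere-vanishing one-variable holomorphic function), and applying $H$ to $(h-z)g\equiv1$ gives $(\bar h-z)\bar g\equiv1$ on $\Omega$, so $\bar h-z$ never vanishes and $z\notin\bar h(\Omega)$; hence $\bar h(\Omega)\subset h(\Omega_0)$. The closed-convex-hull inclusion then reduces to this scalar case: if some $x_0\in\Omega$ had $\bar f(x_0)\notin A:=\overline{\mathrm{co}}(f(\Omega_0))$, then, $A$ being closed and convex, Hahn--Banach separation would provide $\varphi\in Y^*$ and $\alpha\in\mathbb{R}$ with $\operatorname{Re}\varphi(y)\le\alpha$ for all $y\in f(\Omega_0)$ but $\operatorname{Re}\varphi(\bar f(x_0))>\alpha$; with $h:=\varphi\circ f\in\mathcal{H}_\mathrm{G}(\Omega_0)$ one has $\bar h=\varphi\circ\bar f$ (uniqueness) and $h(\Omega_0)\subset\{w\in\mathbb{C}:\operatorname{Re}w\le\alpha\}$, so $\varphi(\bar f(\Omega))=\bar h(\Omega)\subset h(\Omega_0)\subset\{\operatorname{Re}w\le\alpha\}$, contradicting the choice of $x_0$; thus $\bar f(\Omega)\subset A$.

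For part~(b), when $Y$ is a unital Banach algebra, pointwise products of $Y$-valued G\^ateaux holomorphic maps are again G\^ateaux holomorphic (on each complex line this is the classical stability of one-variable holomorphy under a continuous bilinear map), so $\mathcal{H}_\mathrm{G}(\Omega_0,Y)$ and $\mathcal{H}_\mathrm{G}(\Omega,Y)$ are unital algebras; then $\bar f\,\bar g\in\mathcal{H}_\mathrm{G}(\Omega,Y)$ restricts to $(fg)|_C$, giving $\overline{fg}=\bar f\,\bar g$, while the constant unit extends itself, so $H$ is a unital algebra morphism. Finally, if $f(\Omega_0)\subset\mathrm{U}(Y)$ then $g:=f(\cdot)^{-1}\in\mathcal{H}_\mathrm{G}(\Omega_0,Y)$ (compose $f$ with the holomorphic inversion map on $\mathrm{U}(Y)$), and $fg=gf=1_Y$ propagates to $\bar f\,\bar g=\bar g\,\bar f=1_Y$ on $\Omega$, whence $\bar f(\Omega)\subset\mathrm{U}(Y)$. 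The only points needing care beyond bookkeeping are that the auxiliary maps (scalar reciprocal, $Y$-valued product, inversion) stay in $\mathcal{H}_\mathrm{G}$ — each reducing to a one-variable fact applied line by line — and the two uses of Hahn--Banach separation in a possibly non-metrizable locally convex space; I expect the latter to be the only genuinely delicate step.
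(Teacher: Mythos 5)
Your proposal is correct and follows essentially the same route as the paper's proof: reduce to $\mathbb{C}$-valued maps via a functional $\varphi_0$ with $\varphi_0(y_0)=1$, exploit uniqueness to get multiplicativity, use the unit map $(h-\lambda)\cdot\frac{1}{h-\lambda}\equiv1$ for the scalar range inclusion, and then Hahn--Banach separation for the closed convex hull; part (b) is handled identically by propagating $f\cdot f^{-1}\equiv e$ through $H$. The only (immaterial) difference is that you phrase the separation step as a proof by contradiction, whereas the paper passes through $\theta(\bar f(\Omega))\subset\theta(f(\Omega_0))$ for all $\theta\in Y_{\mathbb{R}}^*$ before separating.
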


\begin{proof}
(a). The proof is divided into three steps.\\
\emph{Step 1}. We first prove the existence and uniqueness property for $\mathbb{C}$-valued maps. Let us fix $h\in\mathcal{H}_\mathrm{G}(\Omega_0)$. Choose $y\in Y\setminus\{0\}$ and $\varphi\in Y^*$, such that $\varphi(y)=1$. For $f:=h\cdot y\in\mathcal{H}_\mathrm{G}(\Omega_0,Y)$ and the unique corresponding map $\bar f\in\mathcal{H}_\mathrm{G}(\Omega,Y)$ as in the hypothesis, set $\bar h:=\varphi\circ\bar f\in\mathcal{H}_\mathrm{G}(\Omega)$. Since $\varphi\circ f=h$, it follows that
\[\bar h|_C=\varphi\circ\bar f|_C=\varphi\circ f|_C=(\varphi\circ f)|_C=h|_C.\]
The existence of $\bar h\in\mathcal{H}_\mathrm{G}(\Omega)$ is proved. For the uniqueness part, let $g\in\mathcal{H}_\mathrm{G}(\Omega)$, such that $g|_C=h|_C$. For $g_y:=g \cdot y\in\mathcal{H}_\mathrm{G}(\Omega,Y)$, we see that
\[g_y|_C=g|_C\cdot y=h|_C\cdot y=f|_C=\bar f|_C.\]
By the uniqueness of $\bar f$ we get $g_y=\bar f$, which yields $g=\varphi\circ g_y=\varphi\circ\bar f=\bar h$. Hence $\bar h$ is unique. We thus have proved the claimed existence and uniqueness property. For arbitrary $h\in\mathcal{H}_\mathrm{G}(\Omega_0)$ and $f\in\mathcal{H}_\mathrm{G}(\Omega_0,Y)$, and the unique corresponding maps $\bar h\in\mathcal{H}_\mathrm{G}(\Omega)$ and $\bar f\in\mathcal{H}_\mathrm{G}(\Omega,Y)$, we have $\overline{(h\cdot\!f)}|_C=(h\cdot f)|_C=(\bar h\cdot\!\bar f)|_C$, which leads by the uniqueness property to $\overline{(h\cdot\!f)}=\bar h\cdot\!\bar f$.\\
\emph{Step 2}. We next show the inclusion for $\bar h(\Omega)$ ($h\in\mathcal{H}_\mathrm{G}(\Omega_0)$). Let us fix $\lambda\in\mathbb{C}\setminus h(\Omega_0)$. Thus $g:=\frac1{h-\lambda}\in\mathcal{H}_\mathrm{G}(\Omega_0)$ and $f:=(h-\lambda)g\equiv1$. For the unique corresponding maps $\bar h,\bar g,\bar f\in\mathcal{H}_\mathrm{G}(\Omega)$ and for $F:=(\bar h-\lambda)\bar g\in\mathcal{H}_\mathrm{G}(\Omega)$, we have $F|_C=f|_C=\bar f|_C$. Since $f\equiv1$, the uniqueness of $\bar f$ yields $F=\bar f\equiv1$. Hence $\lambda\in\mathbb{C}\setminus\bar h(\Omega)$. As $\lambda$ was arbitrary, we conclude that $\bar h(\Omega)\subset h(\Omega_0)$.\\
\emph{Step 3}. We next show the inclusion for $\bar f(\Omega)$ ($f\in\mathcal{H}_\mathrm{G}(\Omega_0,Y)$). According to Step 1, for every $\varphi\in Y^*$, the unique map $\bar h\in\mathcal{H}_\mathrm{G}(\Omega)$ with the property that $\bar h|_C=(\varphi\circ f)|_C$ is $\bar h:=\varphi\circ\bar f$. By the inclusion from Step 2 we see that $\varphi(\bar f(\Omega))=\bar h(\Omega)\subset\varphi(f(\Omega_0))$. Hence $\theta(\bar f(\Omega))\subset\theta(f(\Omega_0))$ for every $\theta\in Y_\mathbb{R}^*$ (the continuous dual of $Y$ considered as a real locally convex space $Y_\mathbb{R}$), because $Y_\mathbb{R}^*=\{\mathrm{Re}(\varphi)\,|\,\varphi\in Y^*\}$. Now by the Hahn-Banach separation theorem it follows that $\bar f(a)\in\overline{\mathrm{co}}(f(\Omega_0))$ for every $a\in\Omega$, that is, $\bar f(\Omega)\subset\overline{\mathrm{co}}(f(\Omega_0))$.\\
(b). Let $e$ denote the identity element of $Y$. Since all G\^ateaux holomorphic maps are also G\^ateaux differentiable, both $\mathcal{H}_\mathrm{G}(\Omega_0,Y)$ and $\mathcal{H}_\mathrm{G}(\Omega_0,Y)$ are unital algebras. That $H$ is a unital algebra morphism is immediate, by the uniqueness property. Let $f\in\mathcal{H}_\mathrm{G}(\Omega_0,Y)$, such that $f(\Omega_0)\subset\mathrm{U}(Y)$. For $g\in\mathcal{H}_\mathrm{G}(\Omega_0,Y)$ defined by $g(x)=f(x)^{-1}$, we have $f\cdot g=g\cdot f\equiv e$. By applying the morphism $H$ we deduce that $\bar f\cdot\bar g=\bar g\cdot\bar f\equiv e$, which yields $\bar f(\Omega)\subset\mathrm{U}(Y)$.\\
\emph{Alternative proof of the range inclusion \textup{(\ref{e.range})} from Theorem~\ref{t.C-companion}(a)}. We have already proved that for every $f\in\mathcal{H}_\mathrm{G}(\Omega\setminus K)$, there is a unique $\tilde f\in\mathcal{H}_\mathrm{G}(\Omega)$ (the Hartogs companion of $f$), such that $\tilde f|_{C_{K,\Omega}}=f|_{C_{K,\Omega}}$. Since $C:=C_{K,\Omega}\subset \Omega_0:=\Omega\setminus K\subset\Omega$, by the first inclusion from Proposition~\ref{p.range}(a) we get $\tilde f(\Omega)\subset f(\Omega\setminus K)$.
\end{proof}

For vector-valued Hartogs companions the range inclusion corresponding to (\ref{e.range}) will have three versions, depending on the dimension of $Y$. In order to state it without mentioning cases, we next make an appropriate convention.

\begin{notation}\label{n.range}
For arbitrary subsets $A,B\subset Y$, we write $A\sqsubset B$, if and only if one of the following three conditions holds:
\begin{description}
\item[(a)] $A\subset\overline{\mathrm{co}}(B)$ and $Y$ has infinite dimension.
\item[(b)] $A\subset\mathrm{co}(B)$ and $Y$ has finite dimension at least $2$.
\item[(c)] $A\subset B$ and $Y$ has dimension $1$.
\end{description}
The preorder ``$\sqsubset$'' on $\mathcal{P}(Y)$ is weaker than the inclusion. If $A_i\sqsubset B_i$ for every $i\in I$, then $\bigcup_{i\in I}A_i\sqsubset\bigcup_{i\in I}B_i$.
\end{notation}

The following vector-valued version of Theorem~\ref{t.C-companion} is a needed ingredient for the construction of Hartogs companions in arbitrary dimension.

\begin{lemma}[vector-valued Hartogs companion]\label{l.Y-companion}
Let an open set $\Omega\subset\mathbb{C}^n$ and a compact subset $K\subset\Omega$. Theorem~\ref{t.C-companion} holds for vector-valued maps $f\in\mathcal{H}(\Omega\setminus K,Y)$, with all spaces of the form $\mathcal{H}(\cdot)$ replaced by the corresponding $\mathcal{H}(\cdot,Y)$ and with the range inclusion \textup{(\ref{e.range})} for $\tilde f\in\mathcal{H}(\Omega,Y)$ replaced by
\begin{equation}\label{e.Y-range}\tilde f(\Omega)\sqsubset f(\Omega\setminus K).\end{equation}
\end{lemma}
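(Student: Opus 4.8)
The plan is to run the proof of Theorem~\ref{t.C-companion} essentially verbatim, tracking where the scalar-valued arguments must be replaced by vector-valued ones, the only genuinely new point being the range inclusion (\ref{e.Y-range}) via the preorder ``$\sqsubset$''. First I would observe that parts (b), (c), (d) and the uniqueness half of (a) are formal: the $\bar\partial$-construction of $\tilde f_\chi$ in the proof of Theorem~\ref{t.C-companion}(a) applies coordinatewise (or, more cleanly, after composing with an arbitrary $\varphi\in Y^*$), since $\mathcal{H}(D,Y)=\{f\mid\varphi\circ f\in\mathcal{H}(D)\text{ for all }\varphi\in Y^*,\ f\text{ continuous}\}$ and $Y$ is sequentially complete so the Cauchy-type integrals in (\ref{e.h}), (\ref{e.representation}), (\ref{e.companion1}) converge as $Y$-valued integrals; the identity theorem used throughout is the classical one on connected open subsets of $\mathbb{C}^k$, which holds for $Y$-valued holomorphic maps (apply $\varphi\in Y^*$). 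Thus $\tilde f\in\mathcal{H}(\Omega,Y)$ exists and is unique with $\tilde f|_{C_{K,\Omega}}=f|_{C_{K,\Omega}}$, it restricts correctly under (b) and is given by the representation formula (\ref{e.representation}) in (c), and (d) follows as before from Proposition~\ref{p.connectedness}. If $f$ has an extension $\bar f\in\mathcal{H}(\Omega,Y)$, then $\bar f|_{C_{K,\Omega}}=f|_{C_{K,\Omega}}$ forces $\bar f=\tilde f$ by uniqueness, and then $f(\Omega\setminus K)=\tilde f(\Omega\setminus K)\subset\tilde f(\Omega)$ gives the same ``range inertia'' statement as in the scalar case.

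Next I would prove the range inclusion (\ref{e.Y-range}). The cleanest route is to reduce everything to a pointwise/local statement via Proposition~\ref{p.range}(a). Indeed, set $C:=C_{K,\Omega}$, $\Omega_0:=\Omega\setminus K$. The pair $(\Omega_0,\Omega,C)$ satisfies the hypothesis of Proposition~\ref{p.range}: both $\Omega_0$ and $\Omega$ are $1$-open (being open in $\mathbb{C}^n$) and, by part (a) already established, every $f\in\mathcal{H}(\Omega_0,Y)\subset\mathcal{H}_{\mathrm G}(\Omega_0,Y)$ has a unique extension of $f|_C$ to a member of $\mathcal{H}_{\mathrm G}(\Omega,Y)$, namely $\tilde f$ (here one notes that $\mathcal{H}(\Omega,Y)\subset\mathcal{H}_{\mathrm G}(\Omega,Y)$ and uniqueness in the $\mathcal{H}_{\mathrm G}$ class follows from Theorem~\ref{t.identity}, or simply from the identity theorem on $\mathbb{C}^n$ again). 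Hence Proposition~\ref{p.range}(a) directly yields $\tilde f(\Omega)\subset\overline{\mathrm{co}}(f(\Omega_0))$, which is case (a) of Notation~\ref{n.range}. For $\dim_{\mathbb C}Y=1$ we are in the situation of Theorem~\ref{t.C-companion}(a) after identifying $Y\simeq\mathbb{C}$, so $\tilde f(\Omega)\subset f(\Omega\setminus K)$, which is case (c).

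The remaining case --- $Y$ of finite dimension $\ge2$, where we must upgrade $\overline{\mathrm{co}}$ to the plain (non-closed) convex hull $\mathrm{co}$ --- is the main obstacle, since Proposition~\ref{p.range} only delivers the closed convex hull. Here I would use the representation formula (\ref{e.representation}): fixing $a\in\Omega$, choose $u\in\mathbb{C}^n\setminus\{0\}$ and an $(a,u)$-admissible $G$, so that $\tilde f(a)=\frac1{2\pi\mathrm i}\int_{\partial G}\frac{f(a+\zeta u)}{\zeta}\,\mathrm d\zeta$ with the integrand taking values in $f(\Omega\setminus K)$ (because $a+\partial G\cdot u\subset\Omega\setminus K$ by (c)), parametrised over the compact set $\partial G$. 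Writing the integral as a limit of Riemann sums with positive weights $w_k$ summing to $1$ (since $\frac1{2\pi\mathrm i}\int_{\partial G}\frac{\mathrm d\zeta}{\zeta}=\mathrm{Ind}_{\partial G}(0)=1$, as $0\in G$), each sum lies in $\mathrm{co}(f(\Omega\setminus K))$; and in a finite-dimensional space $\mathrm{co}(S)$ of any connected-looking set --- more to the point, $\mathrm{co}$ of a set obtained as a continuous image of a compact connected set, or after invoking Carath\'eodory's theorem on the $(\dim_{\mathbb R}Y)$-fold hull and compactness --- is already closed, so the limit $\tilde f(a)$ stays in $\mathrm{co}(f(\Omega\setminus K))$. (Alternatively, and perhaps more robustly: by Carath\'eodory, $\mathrm{co}(f(\Omega\setminus K))$ equals the image of a continuous map from $\big(f(\Omega\setminus K)\big)^{m+1}\times\Delta^m$ with $m=\dim_{\mathbb R}Y$; restricting $f$ to the slice $\Omega_L\setminus K$ with $L=a+\mathbb C\cdot u$ and using that the relevant portion of $f(\Omega_L\setminus K)$ is a continuous image of the compact path $a+\partial G\cdot u$, one gets that $\tilde f(a)$ lies in a compact, hence closed, convex subset of $\mathrm{co}(f(\Omega\setminus K))$.) Taking the union over $a\in\Omega$ and invoking the last sentence of Notation~\ref{n.range} (stability of ``$\sqsubset$'' under unions) assembles the three cases into the single statement $\tilde f(\Omega)\sqsubset f(\Omega\setminus K)$. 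Finally, the composition property and the ``if $f$ extends, that extension is $\tilde f$'' clause transfer verbatim, completing the proof that Theorem~\ref{t.C-companion} holds in the $Y$-valued form with (\ref{e.range}) replaced by (\ref{e.Y-range}).
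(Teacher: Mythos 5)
Most of your proposal runs along the paper's own lines: the existence part is obtained exactly as in the paper by composing with functionals $\varphi\in Y^*$ to get the scalar companions $\tilde f_\varphi$ and then defining $\tilde f(a)$ by the $Y$-valued integral in (\ref{e.representation}) (sequential completeness of $Y$), so that $\varphi\circ\tilde f=\tilde f_\varphi$; parts (b), (c), (d) and uniqueness then transfer formally, and the cases $\dim_{\mathbb C}Y=\infty$ (via Proposition~\ref{p.range}(a)) and $\dim_{\mathbb C}Y=1$ of (\ref{e.Y-range}) are handled as in the paper.

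The gap is in the remaining case $2\le\dim_{\mathbb C}Y<\infty$, where $\overline{\mathrm{co}}$ must be upgraded to $\mathrm{co}$. Your Riemann-sum argument is not valid: the weights coming from the Cauchy kernel are $\frac1{2\pi\mathrm i}\frac{\Delta\zeta_k}{\zeta_k}$, which are \emph{complex} numbers; they sum (in the limit) to $\mathrm{Ind}_{\partial G}(0)=1$, but they are not nonnegative reals, so the Riemann sums are \emph{not} convex combinations of values of $f$. They would be only if $\frac1{2\pi\mathrm i}\frac{\mathrm d\zeta}\zeta$ were a probability measure on $\partial G$, i.e.\ essentially only when $G$ is a disc centred at $0$ --- and admissibility cannot guarantee such a choice (e.g.\ when $\Omega_L$ is a thin neighborhood of a circle $K_L$ not centred at $a$, every admissible $G$ is an annular region, never a disc around the origin). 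Your fallback, that $\tilde f(a)$ lies in a compact convex subset because ``the relevant portion of $f(\Omega_L\setminus K)$ is a continuous image of the compact path $a+\partial G\cdot u$'', presupposes exactly what is missing: an inclusion of $\tilde f(a)$ in the \emph{closed convex hull of $f$ over some compact subset} of $\Omega\setminus K$. Proposition~\ref{p.range}(a) only gives the closed convex hull over all of $\Omega\setminus K$, whose closure may pick up points outside $\mathrm{co}(f(\Omega\setminus K))$, and nothing you have established confines $\tilde f(a)$ to the hull of the boundary-path values. The paper closes this step differently: fix $a$, choose $K\subset\mathring K_0\subset K_0\subset\Omega_0\ni a$ with $\overline\Omega_0\subset\Omega$ compact, use part (b) to see that $\tilde f|_{\Omega_0}$ is the companion of $f|_{\Omega_0\setminus K_0}$, apply the already proved inclusion there to get $\tilde f(a)\in\overline{\mathrm{co}}\big(f\big(\overline\Omega_0\setminus\mathring K_0\big)\big)$, and then use that $\overline\Omega_0\setminus\mathring K_0$ is compact, so in finite dimension $\overline{\mathrm{co}}$ of its (compact) image equals $\mathrm{co}$, which sits inside $\mathrm{co}(f(\Omega\setminus K))$. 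Some such shrinking-and-compactness argument is needed to complete your proof; as written, the finite-dimensional case of (\ref{e.Y-range}) is not established.
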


\begin{proof}
The statements of the lemma corresponding to those of Theorem~\ref{t.C-companion} will be referred to as Lemma~\ref{l.Y-companion}(a,b,c,d).\\
(a). According to Theorem~\ref{t.C-companion}(a), for every $\varphi\in Y^*$ the map $f_\varphi:=\varphi\circ f\in\mathcal{H}(\Omega\setminus K)$ has a unique Hartogs companion $\tilde f_\varphi\in\mathcal{H}(\Omega)$. Let $u\in\mathbb{C}^n\setminus\{0\}$ be fixed. For every $a\in\Omega$, choose a particular $(a,u)$-admissible set $G_a\subset\mathbb{C}$ and define
\[\tilde f(a)=\frac1{2\pi\mathrm{i}}\int_{\partial G_a}\frac{f(a+\zeta u)}\zeta\mathrm{d}\zeta\in Y\]
(the integral exists, since $f$ is continuous, $\partial G_a$ is piecewise $\mathcal{C}^1$, and $Y$ is sequentially complete). We thus have defined a map $\tilde f:\Omega\rightarrow Y$. By Theorem~\ref{t.C-companion}(c) we see that
\[\tilde f_\varphi(a)=\frac1{2\pi\mathrm{i}}\int_{\partial G_a}\frac{f_\varphi(a+\zeta u)}\zeta\mathrm{d}\zeta=\varphi\big(\tilde f(a)\big),\quad\mbox{for all }\varphi\in Y^*,\,a\in\Omega.\]
Hence $\varphi\circ\tilde f=\tilde f_\varphi\in\mathcal{H}(\Omega)$ for every $\varphi\in Y^*$, and so $\tilde f\in\mathcal{H}(\Omega,Y)$. Since every $\varphi\circ\tilde f$ is the Hartogs companion of $\varphi\circ f$, it follows that (\ref{e.companion}) holds. The uniqueness of $\tilde f\in\mathcal{H}(\Omega,Y)$ satisfying (\ref{e.companion}) is obvious. The inclusion (\ref{e.Y-range}) will be proved after (d).\\
(b). For every $\varphi\in Y^*$, by (a) and Theorem~\ref{t.C-companion}(b) we see that $\varphi\circ\tilde f|_{\Omega_0}$ is the Hartogs companion of $\varphi\circ f|_{\Omega_0\setminus K_0}$. Therefore, $\tilde f|_{\Omega_0}$ is the Hartogs companion of $f|_{\Omega_0\setminus K_0}$.\\
(c). By Theorem~\ref{t.C-companion}(c), (\ref{e.representation}) holds for all $\varphi\circ\tilde f=\tilde f_\varphi$. Therefore, (\ref{e.representation}) also holds for $\tilde f$.\\
(d). By Theorem~\ref{t.C-companion}(d) we get $C_{K,\Omega}=\Omega\setminus K$, which yields $\tilde f|_{\Omega\setminus K}=f$, by (\ref{e.companion}).\\
\emph{Proof of} (\ref{e.Y-range}). Applying Proposition~\ref{p.range}(a) for $C:=C_{K,\Omega}\subset\Omega_0:=\Omega\setminus K\subset\Omega$ yields $\tilde f(\Omega)\subset\overline{\mathrm{co}}(f(\Omega\setminus K))$, for arbitrary dimension of $Y$. If $\dim_\mathbb{C}(Y)=1$, then (\ref{e.range}) holds. For $\dim_\mathbb{C}(Y)\in\mathbb{N}^*\setminus\{1\}$, in order to show that $\tilde f(\Omega)\subset\mathrm{co}(f(\Omega\setminus K))$, let us fix $a\in\Omega$. Choose the sets $K_0\subset\Omega_0$ as in Theorem~\ref{t.C-companion}(b), such that $a\in\Omega_0$ and $K\subset\mathring K_0$, and $\overline\Omega_0\subset\Omega$ is compact. Since $\tilde f|_{\Omega_0}$ is the Hartogs companion of $f|_{\Omega_0\setminus K_0}$ and $\overline\Omega_0\setminus\mathring K_0$ is compact, by the already proved part of (\ref{e.Y-range}) we get
\[\tilde f(a)\in\tilde f(\Omega_0)\subset\overline{\mathrm{co}}(f(\Omega_0\setminus K_0))\subset\overline{\mathrm{co}}\big(f\big(\overline\Omega_0\setminus\mathring K_0\big)\big)=\mathrm{co}\big(f\big(\overline\Omega_0\setminus\mathring K_0\big)\big)\subset\mathrm{co}(f(\Omega\setminus K)).\]
As $a$ was arbitrary, we conclude that $\tilde f(\Omega)\subset\mathrm{co}(f(\Omega\setminus K))$.
\end{proof}

We consider domains of holomorphy as in Herv\'e\,\cite{herve} (Def.\,5.2.1(a), p.135), but without the connectedness assumption on the set:

\begin{definition}[domain of holomorphy]\label{d.domain}
An open subset $D\subset Y$ is called a \emph{domain of holomorphy}, if and only if there are no open sets $C,D_1\subset Y$, such that
\begin{description}
\item[(a)] $\emptyset\ne C\subset D\cap D_1$ and $D_1$ is connected, with $D_1\not\subset D$.
\item[(b)] For every $g\in\mathcal{H}(D)$, the restriction $g|_C$ has an extension $\bar g\in\mathcal{H}(D_1)$.
\end{description}
\end{definition}

\begin{theorem}[range inertia]\label{t.inertia}
Let an open set $\Omega\subset\mathbb{C}^n$, a compact subset $K\subset\Omega$, and $f\in\mathcal{H}(\Omega\setminus K,Y)$. Then for every domain of holomorphy $D\subset Y$, we have\footnote{If $f$ has an extension from $\mathcal{H}(\Omega,Y)$ (which must be $\tilde f$), then equivalence holds in (\ref{e,inertia}).}
\begin{equation}\label{e,inertia}f(\Omega\setminus K)\subset D \ \Longrightarrow \ \tilde f(\Omega)\subset D.\end{equation}
\end{theorem}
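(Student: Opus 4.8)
\emph{Proof proposal.} The plan is to argue by contradiction: assuming $f(\Omega\setminus K)\subset D$ but $\tilde f(\Omega)\not\subset D$, I build a compact set $M\subset D$ together with a sequence contained in the holomorphic hull of $M$ which converges, in $Y$, to a point of $\partial D$. This contradicts the Cartan--Thullen type fact that a domain of holomorphy is holomorphically convex (so that such hulls are compact subsets of $D$, hence closed in $Y$). The scalar Hartogs companions of the maps $g\circ f$ ($g\in\mathcal H(D)$) and the integral representation formula of Theorem~\ref{t.C-companion}(c) will be the bridge between the source $\mathbb C^n$, where the extension phenomenon lives, and the target $Y$, where $D$ sits.

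First, a reduction. By Theorem~\ref{t.C-companion}(a) (in its vector-valued form, Lemma~\ref{l.Y-companion}) the restriction of $\tilde f$ to any component $\omega$ of $\Omega$ is the Hartogs companion of $f|_{\omega\setminus K_\omega}$, with $K_\omega:=K\cap\omega$ compact and $f(\omega\setminus K_\omega)\subset f(\Omega\setminus K)\subset D$; so it suffices to treat $\Omega$ connected. Put $\Omega_1:=\tilde f^{-1}(D)$ (open, since $\tilde f$ is continuous) and let $\Omega_1'$ be the union of those components of $\Omega_1$ that meet the coincidence set $C_{K,\Omega}$. Since $\tilde f|_{C_{K,\Omega}}=f|_{C_{K,\Omega}}$ maps into $D$, we have $C_{K,\Omega}\subset\Omega_1'\subset\Omega_1$. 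Assume $\tilde f(\Omega)\not\subset D$, i.e. $\Omega_1\neq\Omega$; then $\Omega_1'\neq\Omega$, and as $\Omega$ is connected there is $a_0$ in the boundary of $\Omega_1'$ relative to $\Omega$. Fix a component $\omega'$ of $\Omega_1'$ with $a_0\in\overline{\omega'}$ and a sequence $x_n\in\omega'$ with $x_n\to a_0$; set $y_n:=\tilde f(x_n)\in D$ and $b_0:=\tilde f(a_0)$, so $y_n\to b_0$ by continuity. A disjointness argument shows $a_0\notin\Omega_1$: if $a_0$ lay in a component $\omega''$ of $\Omega_1$, then $\omega''$ would be an open neighbourhood of $a_0$, hence would meet $\omega'$, forcing $\omega''=\omega'\subset\Omega_1'$ and $a_0\in\Omega_1'$, a contradiction. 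Thus $b_0\in\partial D$ and the $y_n$ accumulate in $Y$ only at $b_0\notin D$.

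Now bring in the scalar companions. For $g\in\mathcal H(D)$ the composite $g\circ f$ lies in $\mathcal H(\Omega\setminus K)$ and has a scalar Hartogs companion $G_g:=\widetilde{g\circ f}\in\mathcal H(\Omega)$. On $C_{K,\Omega}$ one has $G_g=g\circ f=g\circ\tilde f$ (the last composite being legitimate there, since $\tilde f=f$ maps $C_{K,\Omega}$ into $D$), and both $G_g|_{\omega'}$ and $(g\circ\tilde f)|_{\omega'}$ are holomorphic on the connected set $\omega'$ (as $\omega'\subset\Omega_1$, so $\tilde f(\omega')\subset D$); agreeing on the nonempty open set $\omega'\cap C_{K,\Omega}$, they coincide on $\omega'$ by the identity theorem. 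In particular $g(y_n)=G_g(x_n)$ for all $n$, and $G_g(x_n)\to G_g(a_0)$ since $G_g$ is continuous at $a_0$. Moreover, by the representation formula $G_g(x)=\frac1{2\pi\mathrm i}\int_{\partial G_x}\frac{g(f(x+\zeta u_x))}{\zeta}\mathrm d\zeta$ for $x$ in any $(x,u_x)$-admissible neighbourhood $\Omega_{G_x,u_x}$; covering the compact set $\{x_n\,:\,n\}\cup\{a_0\}$ by finitely many such neighbourhoods and refining by compact pieces, one gets a fixed compact set $M\subset\Omega\setminus K$ and a constant $C$ with $|G_g(x)|\le C\sup_{w\in f(M)}|g(w)|$ for all $x\in\{x_n\}\cup\{a_0\}$ and all $g\in\mathcal H(D)$. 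Hence $|g(y_n)|\le C\,\|g\|_{f(M)}$ for every $n$ and every $g$; replacing $g$ by $g^{k}$ and letting $k\to\infty$ (the power trick) improves this to $|g(y_n)|\le\|g\|_{f(M)}$, i.e. every $y_n$ lies in the holomorphic hull $\widehat{f(M)}_D:=\{y\in D\,:\,|g(y)|\le\sup_{f(M)}|g|\ \text{for all }g\in\mathcal H(D)\}$.

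Finally, since $D$ is a domain of holomorphy, it is holomorphically convex (Cartan--Thullen and its infinite-dimensional counterpart; see Herv\'e~\cite{herve}), so $\widehat{f(M)}_D$ is a compact subset of $D$, hence closed in the Hausdorff space $Y$. But $y_n\in\widehat{f(M)}_D$ and $y_n\to b_0$ gives $b_0\in\widehat{f(M)}_D\subset D$, contradicting $b_0\notin D$. Therefore $\tilde f(\Omega)\subset D$. I expect the genuine obstacle to be exactly this last transfer step: the extendability of $g\circ\tilde f$ past $a_0$ is, a priori, a phenomenon in the source $\mathbb C^n$ and cannot be ``fattened'' to an open extension in $Y$ (the image $\tilde f(\Omega)$ is thin), so it must be converted into the uniform estimate $|g(y_n)|\le\|g\|_{f(M)}$ and then fed into holomorphic convexity of $D$; the bookkeeping with components of $\Omega_1$ (necessary because $K$ may have interior, so a component of $\tilde f^{-1}(D)$ can miss $C_{K,\Omega}$ altogether) and the shrinking-of-neighbourhoods in the compactness argument for the uniform bound are routine but should be carried out carefully.
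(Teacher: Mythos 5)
Your construction is sound up to the point where a tail of the sequence $(y_n)$ is placed in the hull $\widehat{f(M)}_D=\{y\in D:|g(y)|\le\sup_{f(M)}|g|\ \forall g\in\mathcal H(D)\}$: the reduction to connected $\Omega$, the boundary point $a_0\notin\tilde f^{-1}(D)$, the identity $g(y_n)=\widetilde{(g\circ f)}(x_n)$ obtained from the scalar companions and the identity theorem, and the uniform estimate via the representation formula (\ref{e.representation}) together with the power trick are all correct in substance. (One small repair: a single component $\omega'$ of $\Omega_1'$ with $a_0\in\overline{\omega'}$ need not exist when $\Omega_1'$ has infinitely many components; but you only need to run the identity-theorem comparison of $\widetilde{(g\circ f)}$ and $g\circ\tilde f$ in the component of $\Omega_1$ containing each individual $x_n$, and every such component meets $C_{K,\Omega}$ by your definition of $\Omega_1'$, so this is cosmetic.)

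The genuine gap is the final step, exactly where you predicted the obstacle would be. You invoke ``domain of holomorphy $\Rightarrow$ holomorphically convex, so $\widehat{f(M)}_D$ is compact, hence closed in $Y$.'' In this theorem $Y$ is an arbitrary sequentially complete Hausdorff locally convex space and ``domain of holomorphy'' means Definition~\ref{d.domain}; in infinite dimensions the hull of a compact set is essentially never compact (closed bounded sets are not compact), and even the weaker statement you actually need --- that no sequence in $\widehat{f(M)}_D$ can converge in $Y$ to a point outside $D$ --- is not available at this level of generality. The Cartan--Thullen mechanism requires uniform bounds of arbitrary $g\in\mathcal H(D)$ on neighbourhoods of the hull and convergence of the resulting expansions, which uses local compactness or extra structure (Banach space, bounded type, approximation hypotheses); Herv\'e's book does not supply it for arbitrary locally convex $Y$, and the paper never proves such a result. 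So your argument proves the theorem for finite-dimensional $Y$, but not as stated. The paper's proof bypasses holomorphic convexity altogether: assuming $\tilde f(b)\notin D$, it follows $\tilde f$ along a path from the coincidence set until the first ``bad'' point $a$, sets $y_0=\tilde f(a)$, $D_1=y_0+V_1\not\subset D$, $C=y_0+V_0\subset D\cap D_1$, and shows that every $g\in\mathcal H(D)$ restricted to $C$ extends holomorphically to the connected open set $D_1$ via $\bar g(y)=\widetilde{\bigl(g\circ(f+y-y_0)\bigr)}(a)$, holomorphy in $y$ being checked by differentiation under the integral in (\ref{e.representation}); this is precisely the configuration forbidden by Definition~\ref{d.domain}. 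In other words, the paper does carry out a ``fattening'', not of the thin image $\tilde f(\Omega)$ but of the family of maps $f+z$, $z\in V_1$ --- that constant-perturbation trick is the idea missing from your proposal, and without it (or a genuinely new infinite-dimensional Cartan--Thullen theorem) the last step does not go through.
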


\begin{proof}
Let a domain of holomorphy $D\subset Y$, such that $f(\Omega\setminus K)\subset D$. Assume $\tilde f(b)\in Y\setminus D$ for some $b\in\Omega$. Let us choose sets $K_0$ and $\Omega_0$ as in Lemma~\ref{l.Y-companion}(b), such that $b\in\Omega_0$ and $K\subset\mathring K_0$, and $\overline\Omega_0\subset\Omega$ is compact. The Hartogs companion of the restriction $f_0:=f|_{\Omega_0\setminus K_0}$ is $\tilde f_0=\tilde f|_{\Omega_0}$, by Lemma~\ref{l.Y-companion}(b). Since the set $f\big(\overline\Omega_0\setminus\mathring K_0\big)\subset f(\Omega\setminus K)\subset D$ is compact and $D$ is open, there is an absolutely convex open neighborhood $V_1\subset Y$ of $0$, such that $f_0(\Omega_0\setminus K_0)+V_1\subset D$. Thus
\[f_0\in\mathcal{H}(\Omega_0\setminus K_0,Y),\qquad f_0\in\mathcal{H}(\Omega_0\setminus K_0)+V_1\subset D,\qquad\tilde f_0(b)\notin D.\]
By replacing $\Omega,K,f,\tilde f$, by $\Omega_0,K_0,f_0,\tilde f_0$, respectively, we can assume that
\[f(\Omega\setminus K)+V_1\subset D.\]
There exist a component $\omega\in\Upsilon_\Omega$ containing $b$ and a path $\gamma:[0,1]\rightarrow\omega$, such that $c:=\gamma(0)\in\omega\cap K_\omega^\mathrm{u}\subset C_{K,\Omega}$ and $b=\gamma(1)$. Since $\tilde f(c)=f(c)\in D$ and $\tilde f(b)\in Y\setminus D$, there exists $s\in\,]0,1[$, such that $\tilde f(\gamma([0,s]))\subset D$ and $\tilde f(\gamma(s))+V_1\not\subset D$. Set
\[\Gamma:=\gamma([0,s])\subset\omega,\quad a:=\gamma(s)\in\Gamma,\quad y_0:=\tilde f(a)\in D,\quad D_1:=y_0+V_1\not\subset D.\]
Thus $D_1$ is a connected open set and $\tilde f(\Gamma)\subset D$. As $\Gamma$ is compact and $D,\omega$ are open, we can choose successively an absolutely convex open neighborhood $V_0\subset V_1$ of $0$ and an open ball $B\subset\mathbb{C}^n$ centered at the origin, such that
\[\tilde f(\Gamma)+2V_0\subset D,\qquad\omega_0:=\Gamma+B\subset\omega\cap\tilde f^{-1}(\tilde f(\Gamma)+V_0).\]
Hence $\omega_0\subset\omega$ is a connected open set with $\tilde f(\omega_0)+V_0\subset D$ and
\[C:=y_0+V_0\subset D\cap D_1\]
is a nonempty open set. We next show that the condition from Definition~\ref{d.domain}(b) holds (which will lead to a contradiction). To this end, let us fix $g\in\mathcal{H}(D)$. For every fixed $z\in V_1$, we have $f_z:=f+z\in\mathcal{H}(\Omega\setminus K,Y)$ and $f_z(\Omega\setminus K)\subset D$. As $g\in\mathcal{H}(D)$, we also have $h_z:=g\circ f_z\in\mathcal{H}(\Omega\setminus K)$. According to Theorem~\ref{t.C-companion}(a), $h_z$ has a unique Hartogs companion $\tilde h_z\in\mathcal{H}(\Omega)$. Hence $\tilde h_z|_{C_{K,\Omega}}=h_z|_{C_{K,\Omega}}=g\circ\tilde f_z|_{C_{K,\Omega}}$. For every $z\in V_0$, by the identity theorem we get $\tilde h_z|_{\omega_0}=g\circ\tilde f_z|_{\omega_0}$ (the latter is well-defined and the two maps coincide on the open set $\omega_0\cap C_{K,\Omega}\ni c$). Thus
\begin{equation}\label{e.h_z}\tilde h_z(x)=g\big(\tilde f(x)+z\big),\quad\mbox{for all }z\in V_0,\,x\in\omega_0.\end{equation}
Now let us define the map
\[\bar g:D_1\rightarrow\mathbb{C},\qquad\bar g(y)=\tilde h_{y-y_0}(a).\]
For every $y\in C$, since $z_y:=y-y_0\in V_0$, by (\ref{e.h_z}) and the definition of $\bar g$ we see that $\bar g(y)=\tilde h_{z_y}(a)=g(y_0+z_y)=g(y)$. Hence $\bar g|_C=g|_C$. In order to show that $\bar g\in\mathcal{H}(D_1)$, let us fix $u\in\mathbb{C}^n\setminus\{0\}$ and an $(a,u)$-admissible set $G\subset\mathbb{C}$. By using the representation formula (\ref{e.representation}) (at $x=a$) we deduce that
\begin{equation}\label{e.bar g}\bar g(y)=\frac1{2\pi\mathrm{i}}\int_{\partial G}\frac{h_{y-y_0}(a+\zeta u)}\zeta\mathrm{d}\zeta=\frac1{2\pi\mathrm{i}}\int_{\partial G}\frac{g(f(a+\zeta u)+y-y_0)}\zeta\mathrm{d}\zeta,\end{equation}
for every $y\in D_1$. Since the map
\[h:(\Omega\setminus K)\times V_1\rightarrow\mathbb{C},\qquad h(x,y)=g(f(x)+y-y_0),\]
is holomorphic, it follows that $\bar g$ is continuous and G\^ateaux $y$-differentiation under the integral sign holds in (\ref{e.bar g}). Hence $\bar g\in\mathcal{H}(D_1)$. As the existence of the sets $D_1$ and $C$ as above leads to a contradiction, we conclude that $\tilde f(\Omega)\subset D$.
\end{proof}

\subsection{Boundary principle and 2-cuts properties}\label{ss.boundary}

In this section we explore the first consequences of Lemma~\ref{l.Y-companion} and Theorem~\ref{t.inertia}. We will be mainly using $2$-cuts, since topological assumptions on these are less restrictive and the $2$-boundary of any set (defined below) is smaller.

\begin{proposition}[$2$-compact excision]\label{p.excision}
Let a $2$-open set $\Omega\subset X$ and $f\in\mathcal{H}_\mathrm{G}(\Omega,Y)$. Let a subset $K\subset\Omega$ which has particular compact $2$-cuts through all points\footnote{This condition on $K$ is fulfilled in particular by $2$-compact sets.} of $K$. Then $f(\Omega)\sqsubset f(\Omega\setminus K)$. For every domain of holomorphy $D\subset Y$,
\[f(\Omega)\subset D\iff f(\Omega\setminus K)\subset D.\]
If $Y=\mathbb{C}$, then $f(\Omega)=f(\Omega\setminus K)$.
\end{proposition}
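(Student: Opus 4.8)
The plan is to reduce the statement to the finite-dimensional, vector-valued Hartogs companion from Lemma~\ref{l.Y-companion} by slicing with $2$-dimensional linear varieties, since every property asserted here is either a $2$-cut property or follows from one. First I would fix a point $a\in\Omega$ together with a point $b$ realizing the desired conclusion (e.g.\ a value $\tilde f(a)$ we wish to locate, or a vector we wish to exclude), and choose a plane $L\in\Gamma_2(X)$ with $a\in L$. Because $\Omega$ is $2$-open, $\Omega_L:=\Omega\cap L$ is open in $L\cong\mathbb{C}^2$, and by hypothesis $K_L:=K\cap L$ is a compact $2$-cut of $K$ whenever $a\in K$ (and if $a\notin K$ we may still arrange, after shrinking to a bounded open piece, that $K_L$ is compact in $L$). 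The restriction $f|_{\Omega_L}$ lies in $\mathcal{H}(\Omega_L,Y)$, so Lemma~\ref{l.Y-companion} produces its Hartogs companion, which by the uniqueness in Theorem~\ref{t.identity} must agree with $f|_{\Omega_L}$ itself (since $f$ is already defined on all of $\Omega_L$ and matches on the coincidence set). Thus the lemma's range inclusion~(\ref{e.Y-range}) gives $f(\Omega_L)\sqsubset f(\Omega_L\setminus K_L)\subset f(\Omega\setminus K)$, and ranging over all such planes through all points of $\Omega$ yields $f(\Omega)\sqsubset f(\Omega\setminus K)$ by the union property of ``$\sqsubset$'' recorded in Notation~\ref{n.range}.

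Next I would handle the domain-of-holomorphy equivalence. The implication $f(\Omega)\subset D\Rightarrow f(\Omega\setminus K)\subset D$ is trivial since $\Omega\setminus K\subset\Omega$. For the converse, assume $f(\Omega\setminus K)\subset D$ and fix any $a\in\Omega$; choosing a plane $L\ni a$ as above and applying Theorem~\ref{t.inertia} to the inclusion $K_L\subset\Omega_L$ in $\mathbb{C}^2$ gives that the Hartogs companion of $f|_{\Omega_L\setminus K_L}$ — which is $f|_{\Omega_L}$ — has range inside $D$, so $f(a)\in D$. Since $a$ was arbitrary, $f(\Omega)\subset D$. The only subtlety is that Theorem~\ref{t.inertia} is stated for a \emph{compact} $K\subset\Omega\subset\mathbb{C}^n$, so I must first pass to a bounded open $\Omega_0\subset\Omega_L$ containing $a$ with $\overline{\Omega_0}\subset\Omega_L$ and a compact $K_0$ with $K_L\cap\overline{\Omega_0}\subset\mathring K_0\subset K_0\subset\Omega_0$, invoking the localization property Lemma~\ref{l.Y-companion}(b) so that the companion of the shrunk restriction is still $f$ restricted to $\Omega_0$; this is exactly the reduction already performed inside the proof of Theorem~\ref{t.inertia}, so it is routine.

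Finally, the case $Y=\mathbb{C}$: here ``$\sqsubset$'' is just ``$\subset$'', so the first part already gives $f(\Omega)\subset f(\Omega\setminus K)$, and the reverse inclusion $f(\Omega\setminus K)\subset f(\Omega)$ is immediate from $\Omega\setminus K\subset\Omega$; hence $f(\Omega)=f(\Omega\setminus K)$.

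The main obstacle I anticipate is not conceptual but bookkeeping: ensuring that for a point $a$ one can always select a $2$-plane $L$ through $a$ on which $K\cap L$ is compact and, simultaneously, on which $a$ is positioned so that Lemma~\ref{l.Y-companion}'s coincidence-set machinery applies (one needs $\Omega_L\setminus K_L\neq\emptyset$ and the relevant unbounded components to be nonempty, which is automatic in $\mathbb{C}^2$). The phrase ``has particular compact $2$-cuts through all points of $K$'' in the hypothesis is precisely what guarantees $K_L$ compact when $a\in K$; when $a\notin K$ one shrinks as in the previous paragraph. Once the slicing is set up correctly, every assertion transfers verbatim from the two-variable lemma, so the proof is short.
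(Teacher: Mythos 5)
Your proposal follows the paper's proof essentially verbatim: slice along the compact $2$-cuts guaranteed through the points of $K$, observe that $f|_{\Omega_L}\in\mathcal{H}(\Omega_L,Y)$ is its own Hartogs companion relative to $K_L\subset\Omega_L$ (Lemma~\ref{l.Y-companion}(a)), and invoke the range inclusion (\ref{e.Y-range}) for the first assertion and Theorem~\ref{t.inertia} for the domain-of-holomorphy implication on each slice, finishing with the union property of ``$\sqsubset$'' from Notation~\ref{n.range}. Two small corrections to your bookkeeping. First, the case $a\in\Omega\setminus K$ needs no slicing at all: $f(a)\in f(\Omega\setminus K)$ holds trivially, and likewise $f(a)\in D$ whenever $f(\Omega\setminus K)\subset D$; so decompose $f(\Omega)=f(K)\cup f(\Omega\setminus K)$ and work only with $a\in K$. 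Your claim that for $a\notin K$ one can ``arrange, after shrinking to a bounded open piece, that $K_L$ is compact'' is false in general (the hypothesis supplies compact cuts only through points of $K$, and boundedness of the shrunken cut does not give closedness), but it is also not needed. Second, the preliminary shrinking you propose before applying Theorem~\ref{t.inertia} is superfluous: for $a\in K$ the cut $K_L$ is already compact and contained in the open set $\Omega_L\subset L\simeq\mathbb{C}^2$, so the theorem applies directly, exactly as in the paper.
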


\begin{proof}
Let us fix $a\in K$, together with a compact $2$-cut $K_L$ through $a$. Since $f|_{\Omega_L}\in\mathcal{H}(\Omega_L,Y)$ is the Hartogs companion of its restriction $f|_{\Omega_L\setminus K_L}$, by (\ref{e.Y-range}) it follows that $f(a)\in f(\Omega_L)\sqsubset f(\Omega_L\setminus K_L)\subset f(\Omega\setminus K)$. As $a$ was arbitrary, we conclude that $f(K)\sqsubset f(\Omega\setminus K)$, and hence that $f(\Omega)\sqsubset f(\Omega\setminus K)$. For the equivalence, we only need to prove the implication ``$\Leftarrow$''. Therefore, assuming $f(\Omega\setminus K)\subset D$, we next show that $f(K)\subset D$. Let us fix again $a\in K$, together with a compact $2$-cut $K_L$ through $a$. Since $f|_{\Omega_L}$ is the Hartogs companion of $f|_{\Omega_L\setminus K}$, by Theorem~\ref{t.inertia} we see that $f(a)\in f(\Omega_L)\subset D$. We thus conclude that $f(K)\subset D$.
\end{proof}

\begin{corollary}[excision]\label{c.excision}
Assume $X$ is a Hausdorff topological vector space. Let an open set $\Omega\subset X$ and a $2$-bounded closed subset $K\subset\Omega$. Then
\[f(\Omega)=f(\Omega\setminus K),\quad\mbox{for every }f\in\mathcal{H}_\mathrm{G}(\Omega).\]
\end{corollary}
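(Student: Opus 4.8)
The plan is to deduce the statement directly from Proposition~\ref{p.excision} with $Y=\mathbb{C}$. That proposition gives $f(\Omega)=f(\Omega\setminus K)$ as soon as two things are known: that $\Omega$ is $2$-open, and that $K$ has a compact $2$-cut through each of its points. So the whole proof reduces to checking these two hypotheses in the present setting, where $X$ is a Hausdorff topological vector space, $\Omega\subset X$ is open, and $K\subset\Omega$ is closed and $2$-bounded.

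First I would recall that in a Hausdorff topological vector space every open set is $2$-open and every closed set is $2$-closed (this is the content of the Remark following Definition~\ref{d.d-topology}); in particular $\Omega$ is $2$-open, $K$ is $2$-closed, and by hypothesis $K$ is $2$-bounded. The only substantive step is producing the compact $2$-cuts. Fix $a\in K$ and choose any $L\in\Gamma_2(X)$ with $a\in L$. The topology that $L$ inherits as a subspace of $X$ coincides with the Euclidean topology of the $2$-dimensional affine variety $L$ (the standard fact that finite-dimensional subspaces of a Hausdorff topological vector space carry the Euclidean topology). Consequently $K_L:=K\cap L$ is closed in $L$, because $K$ is closed in $X$, and bounded in $L$, because $K$ is $2$-bounded; by the Heine--Borel theorem $K_L$ is therefore compact. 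Hence $K$ has a compact $2$-cut through every one of its points, in fact through every $L\in\Gamma_2(X)$ meeting $K$.

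With these hypotheses verified, I would apply Proposition~\ref{p.excision} to the $2$-open set $\Omega$, the map $f\in\mathcal{H}_\mathrm{G}(\Omega)=\mathcal{H}_\mathrm{G}(\Omega,\mathbb{C})$, and the set $K$, and read off the case $Y=\mathbb{C}$ of the conclusion, namely $f(\Omega)=f(\Omega\setminus K)$ for every $f\in\mathcal{H}_\mathrm{G}(\Omega)$. I do not expect a serious obstacle here: the corollary is essentially a translation of Proposition~\ref{p.excision} from the intrinsic $d$-topology language into the more familiar topological-vector-space language, and the only mild point is the identification of the subspace and Euclidean topologies on the finite-dimensional cut $L$, which is what legitimizes the use of Heine--Borel.
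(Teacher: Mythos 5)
Your proposal is correct and follows the same route as the paper, whose proof is exactly the one-line observation that $\Omega$ is $2$-open and $K$ is $2$-compact (closed plus $2$-bounded, via the Euclidean topology on finite-dimensional cuts and Heine--Borel), so that Proposition~\ref{p.excision} applies with $Y=\mathbb{C}$. You merely spell out the details the paper leaves implicit; there is no gap.
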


\begin{proof}
Since $\Omega$ is $2$-open and $K$ is $2$-compact, we may apply Proposition~\ref{p.excision}.
\end{proof}

\begin{corollary}[level sets]\label{c.level}
Let a $2$-open set $\Omega\subset X$, a map $f\in\mathcal{H}_\mathrm{G}(\Omega)$, and $S\subset\mathbb{C}$. Then for every $L\in\Gamma_2(X)$, the set $f^{-1}(S)\cap L$ is not nonempty and relatively compact in $\Omega_L$. In particular, $f^{-1}(\{0\})$ has no nonempty compact $2$-cuts.
\end{corollary}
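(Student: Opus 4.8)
The plan is to fix $L\in\Gamma_2(X)$ and reduce everything to the $\mathbb{C}$-valued compact excision property of Proposition~\ref{p.excision}. If $\Omega_L=\emptyset$ there is nothing to prove, so assume $\Omega_L\ne\emptyset$; then $\Omega_L$ is an open subset of $L\simeq\mathbb{C}^2$ and $g:=f|_{\Omega_L}\in\mathcal{H}_\mathrm{G}(\Omega_L)$, with $f^{-1}(S)\cap L=g^{-1}(S)\subset\Omega_L$. Arguing by contradiction, I would suppose that $K:=g^{-1}(S)$ is nonempty and relatively compact in $\Omega_L$, and derive a contradiction.

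First I would unpack the relative compactness. Since a compact subset of the Hausdorff space $L$ is closed in $L$, the assertion ``$K$ is relatively compact in $\Omega_L$'' is equivalent to ``$\overline K$ (closure in $L$) is compact and $\overline K\subset\Omega_L$''. Thus $\hat K:=\overline K$ is a compact subset of the open set $\Omega_L$; being compact in $L$, it is $2$-compact, hence (by the footnote to Proposition~\ref{p.excision}) it has particular compact $2$-cuts through all of its points, so it is admissible as the ``$K$'' of that proposition.

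Next I would apply Proposition~\ref{p.excision} with ambient space $L$ (for which $\dim_{\mathbb{C}}L=2\ge2$), the $2$-open set $\Omega_L$, the map $g\in\mathcal{H}_\mathrm{G}(\Omega_L)$, the set $\hat K$, and $Y=\mathbb{C}$: this yields $g(\Omega_L)=g(\Omega_L\setminus\hat K)$. But $\hat K\supset K=g^{-1}(S)$ forces $\Omega_L\setminus\hat K\subset\Omega_L\setminus K=g^{-1}(\mathbb{C}\setminus S)$, so $g(\Omega_L\setminus\hat K)\subset\mathbb{C}\setminus S$; combining, $g(\Omega_L)\cap S=\emptyset$, i.e. $K=g^{-1}(S)=\emptyset$, contradicting $K\ne\emptyset$. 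This proves the first assertion. The ``in particular'' clause then follows at once: a nonempty compact $2$-cut of $f^{-1}(\{0\})$ is a nonempty set of the form $f^{-1}(\{0\})\cap L$ with $L\in\Gamma_2(X)$ that is compact, hence relatively compact, in $\Omega_L$ — which the first assertion, applied with $S=\{0\}$, forbids.

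I do not expect a serious obstacle here; the one point requiring genuine care is the equivalence between ``relatively compact in $\Omega_L$'' and ``having compact closure contained in $\Omega_L$'', which is exactly what guarantees that the set $\hat K$ fed into Proposition~\ref{p.excision} really is a compact subset of the open set $\Omega_L$. Everything else is a direct citation of the already-established $\mathbb{C}$-valued excision identity.
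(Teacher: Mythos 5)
Your argument is correct and is essentially the paper's own proof: the paper likewise fixes $L$, encloses $f^{-1}(S)\cap L$ in a compact subset $K\subset\Omega_L$, and invokes the $\mathbb{C}$-valued excision identity $f(\Omega_L)=f(\Omega_L\setminus K)$ (via Corollary~\ref{c.excision}, the topological form of Proposition~\ref{p.excision} you cite) to conclude $f(\Omega_L)\subset\mathbb{C}\setminus S$ and hence $f^{-1}(S)\cap L=\emptyset$. Your extra care in passing from relative compactness to a compact closure contained in $\Omega_L$ is a harmless refinement of the same step.
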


\begin{proof}
Assume $\emptyset\ne f^{-1}(S)\cap L\subset K\subset\Omega_L$, for some $L\in\Gamma_2(X)$ and compact set $K\subset L$. Since $f|_{\Omega_L}\in\mathcal{H}(\Omega_L)$ extends its restriction $f|_{\Omega_L\setminus K}$, by Corollary~\ref{c.excision} we deduce that $f(f^{-1}(S)\cap L)\subset f(\Omega_L)=f(\Omega_L\setminus K)\subset Y\setminus S$. It follows that  $f(f^{-1}(S)\cap L)\subset S\cap(Y\setminus S)=\emptyset$, which yields $f^{-1}(S)\cap L=\emptyset$.
\end{proof}

For the next theorem, we define the \emph{$2$-boundary} of any subset $A\subset X$ by
\[\partial_2A:=\bigcup_{L\in\Gamma_2(X)}\partial A_L,\]
where each boundary $\partial A_L:=\partial(A\cap L)$ is considered in $L$. If $X$ is also a topological vector space, then
\[\partial_2A\subset\partial A,\qquad A\cup\partial_2A\subset\overline A.\]

\begin{theorem}[$2$-boundary principle]\label{t.boundary}
Let a $1$-open set $\Omega\subset X$ having particular bounded open $2$-cuts through all points\footnote{These conditions on $\Omega$ hold in particular for $2$-open sets which are $2$-bounded.} of $\Omega$ and $f\in\mathcal{H}_\mathrm{G}(\Omega,Y)\cap\mathcal{C}_{(2)}(\Omega\cup\partial_2\Omega,Y)$. Then $f(\Omega)\sqsubset f(\partial_2\Omega)$. For every continuous seminorm $p:Y\rightarrow\mathbb{R_+}$,
\[\sup_{x\in\Omega}p(f(x))=\sup_{x\in\partial_2\Omega}p(f(x)).\]
For every domain of holomorphy $D\subset Y$,
\[f(\partial_2\Omega)\subset D \ \Longrightarrow \ f(\Omega)\subset D.\]
\end{theorem}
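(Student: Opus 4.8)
The strategy is to reduce the statement to a $2$-cut computation, where the already-established finite-dimensional results (Lemma~\ref{l.Y-companion} and Theorem~\ref{t.inertia}) apply directly, and then to assemble the global conclusions from the pointwise ones using the preorder ``$\sqsubset$'' and its stability under unions (Notation~\ref{n.range}). First I would fix $a\in\Omega$ and, by hypothesis, choose $L\in\Gamma_2(X)$ with $a\in\Omega_L$ such that $\Omega_L$ is a bounded open subset of $L\simeq\mathbb{C}^2$. Write $\Omega_L=:\omega$; it is a bounded open set in $\mathbb{C}^2$ and $\partial\omega=\partial\Omega_L\subset\partial_2\Omega$. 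The restriction $g:=f|_{\overline\omega}$ lies in $\mathcal{H}(\omega,Y)\cap\mathcal{C}(\overline\omega,Y)$ by the $2$-continuity hypothesis (continuity on $\Omega\cup\partial_2\Omega\supset\omega\cup\partial\omega=\overline\omega$ along $L$-cuts).

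The core step is then a ``boundary = companion'' argument in $\mathbb{C}^2$: I want to view $g|_\omega$ as the Hartogs companion of one of its own restrictions obtained by excising a compact set touching $a$. Concretely, pick a small closed ball $\overline B\subset\omega$ centered at $a$, set $K_0:=\overline B$ and $\Omega_0:=\omega$, so $K_0\subset\Omega_0\subset\mathbb{C}^2$ with $K_0$ compact and $\Omega_0$ open; by Lemma~\ref{l.Y-companion}(a), $g|_{\Omega_0}$ is the unique Hartogs companion of its restriction $g|_{\Omega_0\setminus K_0}=g|_{\omega\setminus\overline B}$ (since $g\in\mathcal{H}(\omega,Y)$ already extends that restriction). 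Hence $g(a)\in g(\omega)=\widetilde{(g|_{\omega\setminus\overline B})}(\omega)\sqsubset g(\omega\setminus\overline B)$ by the range inclusion~\eqref{e.Y-range}. But this only pushes $a$ off a tiny ball, not out to the boundary. To reach $\partial\omega$, I would instead exhaust $\omega$ by a decreasing family of such excisions, or — cleaner — argue directly: for each compact $K_0\subset\omega$ containing $a$ in its interior, the same reasoning gives $f(a)\sqsubset f(\omega\setminus K_0)$. Letting $K_0$ swell to fill $\omega$ and using that $f$ is continuous up to $\overline\omega$ with $f(\overline\omega)=f(\omega)\cup f(\partial\omega)$ compact, one shrinks the right-hand side toward $f(\partial\omega)$; the precise mechanism is: $\bigcap_{K_0} \overline{\mathrm{co}}\,\overline{f(\omega\setminus K_0)}=\overline{\mathrm{co}}\,\overline{f(\partial\omega)}$ (for the infinite-dimensional case) because any point of $\omega$ is eventually swallowed by $K_0$, so only boundary values survive in the intersection, and similarly with $\mathrm{co}$ in finite dimension and with plain inclusion when $\dim_\mathbb{C} Y=1$. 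This yields $f(a)\sqsubset f(\partial\omega)\subset f(\partial_2\Omega)$ for every $a\in\Omega$, and since $a$ was arbitrary and $\sqsubset$ is stable under unions, $f(\Omega)\sqsubset f(\partial_2\Omega)$.

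For the seminorm equality, one inclusion $\sup_{\partial_2\Omega}p(f)\le\sup_\Omega p(f)$ is not automatic (the $2$-boundary need not lie in $\Omega$), so I would get both directions from $f(\Omega)\sqsubset f(\partial_2\Omega)$ together with a symmetric statement for the seminorm. From $f(\Omega)\subset\overline{\mathrm{co}}(f(\partial_2\Omega))$ and the fact that a continuous seminorm $p$ is sublinear and $p$-bounded on $\overline{\mathrm{co}}(S)$ by $\sup_S p$, we get $\sup_\Omega p(f)\le\sup_{\partial_2\Omega}p(f)$; for the reverse, each point of $\partial_2\Omega$ is a limit along some $L$-cut of points of $\Omega_L\subset\Omega$, and $p\circ f$ is $2$-continuous on $\Omega\cup\partial_2\Omega$, so $p(f(b))\le\sup_\Omega p(f)$ for every $b\in\partial_2\Omega$. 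Combining gives the equality (valid verbatim in the finite-dimensional cases of $\sqsubset$ too, since $\mathrm{co}(S)\subset\overline{\mathrm{co}}(S)$). Finally, the domain-of-holomorphy implication: suppose $f(\partial_2\Omega)\subset D$. Fix $a\in\Omega$ and the $2$-cut $\omega=\Omega_L$ as above; for each compact $K_0\subset\omega$ with $a\in\mathring K_0$, Theorem~\ref{t.inertia} applied to $f|_\omega\in\mathcal{H}(\omega,Y)$ (as the companion of $f|_{\omega\setminus K_0}$) gives $f(\omega\setminus K_0)\subset D\Rightarrow f(\omega)\subset D$; choosing $K_0$ large enough that $f(\omega\setminus K_0)$ is within an absolutely convex open neighborhood $V$ of $f(\partial\omega)$ with $f(\partial\omega)+V\subset D$ (possible since $f(\partial\omega)$ is compact in the open set $D$ and $f$ is continuous up to the boundary), we conclude $f(a)\in f(\omega)\subset D$; arbitrariness of $a$ finishes it.

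The main obstacle I anticipate is the limiting/exhaustion step: Lemma~\ref{l.Y-companion} and Theorem~\ref{t.inertia} are stated for a \emph{fixed} compact $K\subset\Omega$, whereas here I need the conclusion for the ``virtual'' excision of everything interior, which the boundary of a $2$-cut represents only in the limit. Getting the right-hand side $f(\omega\setminus K_0)$ to collapse to $f(\partial\omega)$ cleanly — especially keeping the convex-hull closures under control so that the three cases of $\sqsubset$ all go through uniformly — is the delicate part; one must be careful that in the $\dim_\mathbb{C}Y=1$ case no closure or convex hull is silently introduced, and in finite dimension $\ge2$ that $\mathrm{co}$ of a compact set is compact so the intersection argument is legitimate. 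An alternative that sidesteps the limit is to invoke part~(b) of Lemma~\ref{l.Y-companion} and Theorem~\ref{t.inertia} directly with $K_0$ a single well-chosen compact neighborhood of $a$ whose complement in $\omega$ deformation-retracts onto a collar of $\partial\omega$, reducing $f(\omega\setminus K_0)$ to $f$ of an arbitrarily thin collar and then to $f(\partial\omega)$ by continuity; I would try this route first as it keeps everything within the stated lemmas.
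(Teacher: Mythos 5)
Your proposal is correct and, at its core, it is the paper's own argument: reduce to a bounded open $2$-cut $\Omega_L$ through a given point, excise compacts $K_0$ exhausting $\Omega_L$, apply the companion range inclusion (\ref{e.Y-range})/Proposition~\ref{p.excision} to get $f(a)\sqsubset f(\Omega_L\setminus K_0)$, and pass to the boundary using boundedness of the cut and $2$-continuity; the seminorm equality is handled exactly as in the paper. The only substantive differences are in packaging. (1) Your ``precise mechanism'' $\bigcap_{K_0}\overline{\mathrm{co}}\,\overline{f(\omega\setminus K_0)}=\overline{\mathrm{co}}\,\overline{f(\partial\omega)}$ is where all the real work sits: the paper proves exactly this (in the form of three cases) by taking a convergent subsequence in the compact $\overline\Omega_L$ for $\dim_\mathbb{C}Y=1$, a Carath\'eodory representation plus subsequences for finite dimension $\ge2$, and a Hahn--Banach separation step (via the scalar case) in infinite dimension; your one-line justification (``points are eventually swallowed'') only gives $\bigcap_{K_0}\overline{\omega\setminus K_0}=\partial\omega$, so these convexity/compactness arguments must still be supplied -- note also that only the inclusion ``$\subset$'' of your identity is true and needed, since boundary values need not be attained on $\omega\setminus K_0$. (2) For the domain-of-holomorphy implication you argue pointwise, choosing a single large $K_0$ by uniform continuity so that $f(\omega\setminus K_0)\subset f(\partial\omega)+V\subset D$ and then invoking Theorem~\ref{t.inertia}; the paper instead sets $K:=\Omega\setminus f^{-1}(D)$ globally, checks it has compact $2$-cuts, and applies the equivalence of Proposition~\ref{p.excision}. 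Both are valid and rest on the same inertia theorem. Finally, the ``alternative collar route'' you say you would try first does not by itself avoid the limiting step for the sharp case $\dim_\mathbb{C}Y=1$ (a thin collar only gives $f(a)\in f(\partial\omega)+V$); it can be salvaged by intersecting over all neighborhoods $V$, using that $f(\partial\Omega_L)$ is compact, hence closed, but that is again a limit in disguise.
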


\begin{proof}
We first show the inclusion $f(\Omega)\sqsubset f(\partial_2\Omega)$. To this end, let us fix $y=f(a)$, with $a\in\Omega$, together with a bounded $2$-cut $\Omega_L$ through $a$. Let us choose a sequence $(K_n)_{n\in\mathbb{N}}$ of compact subsets of $\Omega_L$, such that $K_n\subset\mathring K_{n+1}$ (the interior is taken in $L$) for every $n\in\mathbb{N}$ and $\bigcup_{n\in\mathbb{N}}K_n=\Omega_L$. According to Proposition~\ref{p.excision}, for every $n\in\mathbb{N}$ we have $y\in f(\Omega_L)\sqsubset f(\Omega_L\setminus K_n)$. We need to analyze three cases.\\
\emph{Case 1}. If $\dim_\mathbb{C}(Y)=1$, then $y\in f(\Omega_L)=f(\Omega_L\setminus K_n)$, and so $y=f(b_n)$ for some $b_n\in\Omega_L\setminus K_n$. In $L$, the bounded sequence $(b_n)_{n\in\mathbb{N}}\subset\Omega_L$ has a subsequence which converges to some $b\in\partial\Omega_L\subset\partial_2\Omega\cap L$. Since $f$ is $2$-continuous, a passage to the limit yields $y=f(b)\in f(\partial_2\Omega)$. Hence $f(\Omega)\subset f(\partial_2\Omega)$.\\
\emph{Case 2}. For $Y$ of arbitrary dimension, by the already proved inclusion we deduce that $\varphi(f(\Omega))\subset\varphi(f(\partial_2\Omega))$ for every $\varphi\in Y^*$. As in the proof of Proposition~\ref{p.range}(a) (Step 3) we conclude that $f(\Omega)\subset\overline{\mathrm{co}}(f(\partial_2\Omega))$.\\
\emph{Case 3}. If $m:=\dim_\mathbb{C}(Y)\in\mathbb{N}^*\setminus\{1\}$, then $y\in f(\Omega_L)\subset\mathrm{co}(f(\Omega_L\setminus K_n))$, and so $y=\sum_{i=0}^m\lambda_{i,n}f(b_{i,n})$ for some $b_{i,n}\in\Omega_L\setminus K_n$ and $\lambda_{i,n}\in[0,1]$ ($0\le i\le m$), such that $\sum_{i=0}^m\lambda_{i,n}=1$. By taking a convergent subsequence in $L^{m+1}\times[0,1]^{m+1}$, we may assume that $\lim_{n\to\infty}b_{i,n}=b_i\in\partial_2\Omega\cap L$ in $L$ and $\lim_{n\to\infty}\lambda_{i,n}=\lambda_i\in[0,1]$ for $0\le i\le m$. Since $f$ is $2$-continuous, it follows that $y=\sum_{i=0}^m\lambda_if(b_i)\in\mathrm{co}(f(\partial_2\Omega))$. Hence $f(\Omega)\subset\mathrm{co}(f(\partial_2\Omega))$. By the above cases we conclude that $f(\Omega)\sqsubset f(\partial_2\Omega)$.\\
To show the equality, let a continuous seminorm $p:Y\rightarrow\mathbb{R}_+$. By the range inclusion we see that $\sup p(f(\Omega))\le\sup p(f(\partial_2\Omega))$. Now let us fix $b\in\partial_2\Omega$. Hence $b$ is the limit in some $L\in\Gamma_2(X)$ of a sequence $(b_n)_{n\in\mathbb{N}}\subset\Omega_L$. Since $f|_{\Omega_L\cup\partial\Omega_L}$ is continuous, it follows that $p(f(b))=\lim_{n\to\infty}p(f(b_n))\le\sup p(f(\Omega_L))\le\sup p(f(\Omega))$.\\
In order to prove the last implication, assume $f(\partial_2\Omega)\subset D$, that is, $\partial_2\Omega\subset f^{-1}(D)$. Set $K:=\Omega\setminus f^{-1}(D)$. For every $L\in\Gamma_2(X)$ such that $\Omega_L$ is bounded, the $2$-cut $K_L=\Omega_L\setminus f^{-1}(D)=\overline\Omega_L\setminus f^{-1}(D)$ is compact in $L$. It follows that $K$ satisfies the condition from Proposition~\ref{p.excision}. Since $\Omega\setminus K\subset f^{-1}(D)$, by the equivalence from Proposition~\ref{p.excision} we conclude that $f(\Omega)\subset D$.
\end{proof}

The following version of Theorem~\ref{t.boundary} is closer to a maximum modulus principle.

\begin{corollary}[boundary principle]\label{c.boundary}
Assume $X$ is a Hausdorff topological vector space. Let a $2$-bounded open set $\Omega\subset X$ and a map $f\in\mathcal{H}_\mathrm{G}(\Omega,Y)\cap\mathcal{C}(\overline\Omega,Y)$. Then $f(\Omega)\sqsubset f(\partial\Omega)$. In particular, for every continuous seminorm $p:Y\rightarrow\mathbb{R}_+$, we have $\sup_{x\in\Omega}p(f(x))=\sup_{x\in\partial\Omega}p(f(x))$.
For every domain of holomorphy $D\subset Y$,
\[f(\overline\Omega)\subset D\iff f(\partial\Omega)\subset D.\]
If $Y=\mathbb{C}$, then $f(\overline\Omega)=f(\partial\Omega)$.
\end{corollary}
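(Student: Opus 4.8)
The plan is to obtain Corollary~\ref{c.boundary} as a specialization of the $2$-boundary principle (Theorem~\ref{t.boundary}); the only work is to check that its hypotheses are met in the present setting and then to trade the $2$-boundary $\partial_2\Omega$ for the ordinary boundary $\partial\Omega$.

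First I would verify the hypotheses of Theorem~\ref{t.boundary} for the given $\Omega$ and $f$. Since $X$ is a Hausdorff topological vector space, the open set $\Omega$ is $2$-open (hence $1$-open), and being also $2$-bounded by assumption, each $2$-cut $\Omega_L$ with $L\in\Gamma_2(X)$ is a bounded open subset of $L$; in particular $\Omega$ has a bounded open $2$-cut through every one of its points. For the regularity requirement, recall $\Omega\cup\partial_2\Omega\subset\overline\Omega$; since every finite-dimensional linear variety of a Hausdorff topological vector space carries its Euclidean topology as subspace topology, the restriction of the continuous map $f|_{\overline\Omega}$ to any cut $(\Omega\cup\partial_2\Omega)\cap L$ with $L\in\Gamma_{1,2}(X)$ is continuous, i.e. $f\in\mathcal{C}_{(2)}(\Omega\cup\partial_2\Omega,Y)$. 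Together with $f\in\mathcal{H}_\mathrm{G}(\Omega,Y)$ this places us exactly in the situation of Theorem~\ref{t.boundary}, which yields $f(\Omega)\sqsubset f(\partial_2\Omega)$, the equality $\sup_{x\in\Omega}p(f(x))=\sup_{x\in\partial_2\Omega}p(f(x))$ for every continuous seminorm $p$, and the implication $f(\partial_2\Omega)\subset D\Rightarrow f(\Omega)\subset D$ for every domain of holomorphy $D\subset Y$.

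Next I would pass from $\partial_2\Omega$ to $\partial\Omega$, using $\partial_2\Omega\subset\partial\Omega$ and the identity $\overline\Omega=\Omega\cup\partial\Omega$ (valid since $\Omega$ is open). Enlarging the second argument of $\sqsubset$ preserves the relation, since it only enlarges $\mathrm{co}(\cdot)$ or $\overline{\mathrm{co}}(\cdot)$ (cf.\ Notation~\ref{n.range}); hence $f(\Omega)\sqsubset f(\partial_2\Omega)\subset f(\partial\Omega)$ gives $f(\Omega)\sqsubset f(\partial\Omega)$. For the seminorm statement, the chain $\sup_\Omega p\circ f=\sup_{\partial_2\Omega}p\circ f\le\sup_{\partial\Omega}p\circ f\le\sup_{\overline\Omega}p\circ f=\sup_\Omega p\circ f$ closes up, the last equality holding because $p\circ f$ is continuous on $\overline\Omega$ and $\Omega$ is dense in it. For a domain of holomorphy $D$, the implication $f(\overline\Omega)\subset D\Rightarrow f(\partial\Omega)\subset D$ is trivial, while $f(\partial\Omega)\subset D$ gives $f(\partial_2\Omega)\subset D$, hence $f(\Omega)\subset D$ by Theorem~\ref{t.boundary}, so $f(\overline\Omega)=f(\Omega)\cup f(\partial\Omega)\subset D$. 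Finally, when $Y=\mathbb{C}$ the relation $\sqsubset$ is ordinary inclusion, so $f(\Omega)\subset f(\partial\Omega)$ and therefore $f(\overline\Omega)=f(\Omega)\cup f(\partial\Omega)=f(\partial\Omega)$.

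I expect no genuine obstacle: the argument is a bookkeeping reduction to Theorem~\ref{t.boundary}. The one point deserving care is checking that the hypothesis $f\in\mathcal{C}(\overline\Omega,Y)$ really delivers the $2$-continuity on $\Omega\cup\partial_2\Omega$ required by Theorem~\ref{t.boundary} (this rests on finite-dimensional subspaces of a Hausdorff topological vector space carrying their Euclidean topology), together with the observation that ``open and $2$-bounded'' automatically supplies the ``bounded open $2$-cuts through all points'' condition.
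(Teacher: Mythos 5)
Your proposal is correct and follows essentially the same route as the paper: the paper's own proof is the one-line observation that an open $2$-bounded set in a Hausdorff topological vector space satisfies the hypotheses of Theorem~\ref{t.boundary} (open $\Rightarrow$ $2$-open with open $2$-cuts, $2$-bounded $\Rightarrow$ bounded $2$-cuts, $f\in\mathcal{C}(\overline\Omega,Y)$ $\Rightarrow$ $f\in\mathcal{C}_{(2)}(\Omega\cup\partial_2\Omega,Y)$) together with $\partial_2\Omega\subset\partial\Omega$. You merely spell out these verifications and the harmless passage from $\partial_2\Omega$ to $\partial\Omega$ (monotonicity of $\sqsubset$ in its second argument, density of $\Omega$ in $\overline\Omega$ for the seminorm equality), all of which is accurate.
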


\begin{proof}
Since $\Omega$ is $2$-open and $2$-bounded, $\partial_2\Omega\subset\partial\Omega$, and $f$ is $2$-continuous, the conclusion follows by Theorem~\ref{t.boundary}.
\end{proof}

\begin{remark}\label{r.boundary}
If $Y$ a unital complex Banach algebra, in the above results we may consider the set $D=\mathrm{U}(Y)$ of all invertible elements of $Y$.
\end{remark}

\begin{example}[$2$-boundary principle]\label{ex.boundary}
Let us consider $\alpha>1$, a normed space $X$ with $\dim_\mathbb{C}(X)\ge3$, a nonzero linear functional $\varphi:X\rightarrow\mathbb{C}$, and the sets
\[\Omega:=\{x\in X\,|\,\|x\|<|\varphi(x)|^\alpha\},\qquad F:=\{x\in X\,|\,\|x\|=|\varphi(x)|^\alpha\}.\]
Then $\Omega$ is finitely open and unbounded, $\partial_2\Omega\subset F$, and
\[f(\Omega)\subset f(F),\quad\mbox{for every }f\in\mathcal{H}_\mathrm{G}(\Omega)\cap\mathcal{C}(\Omega\cup F).\]
\end{example}

\subsection{Vector-valued Hartogs companions in arbitrary dimension.}\label{ss.Hc}

\begin{remark}[Hartogs 1-companion]\label{r.dim=1}
For nonempty open set $\Omega\subset\mathbb{C}$, compact subset $K\subset\Omega$, and map $f\in\mathcal{H}(\Omega\setminus K,Y)$, we may consider the \emph{Hartogs 1-companion} $\tilde f\in\mathcal{H}(\Omega,Y)$ defined by (\ref{e.companion1}) or by (\ref{e.representation}) ($Y$-valued integrals). A similar construction is possible if we replace $\mathbb{C}$ by any complex line $L\in\Gamma_1(X)$.
\end{remark}

For the construction of the Hartogs companion in arbitrary dimension we use a slicing technique with linear varieties of finite dimension. More precisely, for fixed sets $K\subset\Omega\subset X$ and map $f:\Omega\setminus K\rightarrow Y$, and arbitrary $L\in\Gamma_{2,\infty}(X)$, we have $K_L\subset\Omega_L\subset L$. If $\Omega_L$ is open, $K_L$ is compact, and $f_L:=f|_{\Omega_L\setminus K_L}\in\mathcal{H}(\Omega_L\setminus K_L,Y)$, then Lemma~\ref{l.Y-companion}(a) provides a unique Hartogs companion $\tilde f_L\in\mathcal{H}(\Omega_L,Y)$. Then we will show that any two Hartogs companions $\tilde f_{L_1}$ and $\tilde f_{L_2}$ agree on $\Omega_{L_1}\cap\Omega_{L_2}$, and so all $\tilde f_L$ may be patched together to define a map $\tilde f\in\mathcal{H}_\mathrm{G}(\Omega,Y)$. The coincidence set will be the union of all coincidence sets of the inclusions $K_L\subset\Omega_L$.

\begin{notation}\label{n.C}
For every inclusion $M\subset\Omega_0\subset L\in\Gamma_{2,\infty}(X)$, with $\Omega_0$ open and $M$ compact in $L$, as in Theorem~\ref{t.C-companion}(a) we may consider the \emph{coincidence set}
\[C_{M,\Omega_0}:=\bigcup_{\omega\in\Upsilon_{\Omega_0}}(\omega\cap M_\omega^\mathrm{u})\subset\Omega_0\setminus M,\]
where $\Upsilon_{\Omega_0}$ stands for the set of all components of $\Omega_0$ in $L$ (endowed with its natural topology) and $M_\omega^\mathrm{u}$ denotes the unbounded component of $L\setminus(M\cap\omega)$. By Lemma~\ref{l.Y-companion}(a) we deduce that every map $h\in\mathcal{H}(\Omega_0\setminus M,Y)$ has a unique Hartogs companion $\tilde h\in\mathcal{H}(\Omega_0,Y)$, and that $\tilde h|_{C_{M,\Omega_0}}=h|_{C_{M,\Omega_0}}$.
\end{notation}

\begin{theorem}[Hartogs companion in arbitrary dimension]\label{t.Y-companion}
Let a fixed integer $d\ge2$, a $d$-open set $\Omega\subset X$, and a $d$-compact subset $K\subset\Omega$. Let us define the $d$-coincidence set of the inclusion $K\subset\Omega$ as
\begin{equation}\label{e.C}C_{K,\Omega}^d:=\bigcup_{L\in\Gamma_{2,d}(X)}C_{K_L,\Omega_L}\subset\Omega\setminus K.\end{equation}
Let a map $f\in\mathcal{H}_\mathrm{G}(\Omega\setminus K,Y)$. Then
\begin{description}
\item[(a)] There exists a unique map $\tilde f\in\mathcal{H}_\mathrm{G}(\Omega,Y)$ (which will be called the Hartogs companion\footnote{Applying the theorem for another integer $s\ge2$, $s<d$ leads to the same map $\tilde f$.} of $f$), such that
\begin{equation}\label{e.Y-companion}\tilde f|_{C_{K,\Omega}^d}=f|_{C_{K,\Omega}^d}.\end{equation}
For every $L\in\Gamma_{2,d}(X)$, the restriction $\tilde f|_{\Omega_L}$ is the Hartogs companion of $f|_{\Omega_L\setminus K_L}$. Furthermore, $\tilde f(\Omega)\sqsubset f(\Omega\setminus K)$. For every domain of holomorphy $D\subset Y$, the implication \textup{(\ref{e,inertia})} holds.
\item[(b)] Let a linear variety $E\subset X$ of dimension (possibly infinite) at least $2$. If $K_E\subset K_0\subset\Omega_0\subset\Omega_E$, with $K_0$ $2$-compact and $\Omega_0$ $2$-open in $E$, then $\tilde f|_{\Omega_0}$ is the Hartogs companion of $f|_{\Omega_0\setminus K_0}$.
\item[(c)] For arbitrarily fixed $a\in\Omega$ and $u\in X\setminus\{0\}$, and $(a,u)$-admissible $G\subset\mathbb{C}$, the set $\Omega_{G,u}:=\{x\in\Omega\,|\,C_{G,u}(x)\mbox{ holds}\}$ is $(d-1)$-open and $a\in\Omega_{G,u}$. We have $\Omega_{G,u}+\partial G\cdot u\subset\Omega\setminus K$, and the representation formula \textup{(\ref{e.representation})} holds.
\item[(d)] If $\Omega\setminus K$ is polygonally connected, then $\tilde f|_{\Omega\setminus K}=f$.
\end{description}
If $\Omega$ is finitely open and $K$ is finitely compact, the theorem holds with a few changes: $\Gamma_{2,d}(X)$ is replaced by $\Gamma_{2,\infty}(X)$, the coincidence set from \textup{(\ref{e.C})} is defined by
\[C_{K,\Omega}:=\bigcup_{L\in\Gamma_{2,\infty}(X)}C_{K_L,\Omega_L}=\bigcup_{d\ge2}C_{K,\Omega}^d,\]
the set $\Omega_{G,u}$ from \textup{(c)} is finitely open, and the statement \textup{(d)} is replaced by
\begin{description}
\item[(d')] If $X\setminus K$ is polygonally connected, then $C_{K,\Omega}=\Omega\setminus K$ and $\tilde f|_{\Omega\setminus K}=f$.
\end{description}
\end{theorem}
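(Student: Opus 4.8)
The plan is to reduce everything to the finite-dimensional vector-valued case (Lemma~\ref{l.Y-companion}) applied slice-by-slice, and then to patch the slices using the identity theorem (Theorem~\ref{t.identity}). First I would verify that the $d$-coincidence set $C_{K,\Omega}^d$ is genuinely useful, i.e. that it is nonempty on the relevant slices: for each $L\in\Gamma_{2,d}(X)$ with $\Omega_L\neq\emptyset$, the $L$-cut $\Omega_L$ is open in $L\cong\mathbb{C}^{\dim L}$ and $K_L$ is compact, so Lemma~\ref{l.Y-companion} produces a unique Hartogs companion $\tilde f_L\in\mathcal{H}(\Omega_L,Y)$ of $f_L:=f|_{\Omega_L\setminus K_L}$, with $\tilde f_L|_{C_{K_L,\Omega_L}}=f_L|_{C_{K_L,\Omega_L}}$ and the range condition $\tilde f_L(\Omega_L)\sqsubset f_L(\Omega_L\setminus K_L)$.

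The crucial compatibility step for (a): given $L_1,L_2\in\Gamma_{2,d}(X)$ with $\Omega_{L_1}\cap\Omega_{L_2}\neq\emptyset$, I must show $\tilde f_{L_1}=\tilde f_{L_2}$ on that intersection. The idea is to pass through an intermediate variety: for a point $a$ in the intersection and directions spanning $L_1,L_2$ locally, one finds (when $\dim L_1+\dim L_2\le d$, or by iterating in steps that stay within dimension $d$) a variety $L_0$ of dimension $\le d$ containing a $2$-plane through $a$ inside both, and one invokes Lemma~\ref{l.Y-companion}(b,c): the representation formula~(\ref{e.representation}) shows $\tilde f_L(a)$ depends only on $f$ restricted to a single complex line through $a$, hence is the same computed in $L_1$ or in $L_2$. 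This lets one define $\tilde f:\Omega\to Y$ unambiguously by $\tilde f|_{\Omega_L}:=\tilde f_L$, and since G\^ateaux holomorphy is tested along complex lines, $\tilde f\in\mathcal{H}_\mathrm{G}(\Omega,Y)$ follows from $\tilde f_L\in\mathcal{H}(\Omega_L,Y)$ for all $2$-planes $L$. Relation~(\ref{e.Y-companion}) is then immediate from the slicewise coincidences, and uniqueness follows from Theorem~\ref{t.identity} applied on each polygonally connected component of $\Omega$, using that $C_{K,\Omega}^d$ is $1$-open (being a union of open subsets of $2$-cuts, each of which is real-absorbing at its points) wherever it meets such a component. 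The range statement $\tilde f(\Omega)\sqsubset f(\Omega\setminus K)$ and the domain-of-holomorphy implication~(\ref{e,inertia}) come by restricting to $2$-cuts and combining Lemma~\ref{l.Y-companion} (for $\sqsubset$) with Theorem~\ref{t.inertia}, exactly as in the proof of Proposition~\ref{p.excision}; one notes $C_{K,\Omega}^d\subset\Omega_0:=\Omega\setminus K\subset\Omega$ and feeds this into Proposition~\ref{p.range}(a) to get the convex-hull inclusion, upgrading it to $\mathrm{co}$ or $\subset$ in the finite-dimensional cases by the same compact-exhaustion trick used in Lemma~\ref{l.Y-companion}.

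Parts (b), (c), (d) are then bookkeeping on top of (a). For (b), restrict $f$ and $\tilde f$ to $2$-cuts of $E$ and apply Lemma~\ref{l.Y-companion}(b) slicewise, then reassemble. For (c), the set-theoretic computation of $\Omega_{G,u}$ is verbatim as in Theorem~\ref{t.C-companion}(c) but now only the cuts in dimension $\le d$ are controlled, so $\Omega_{G,u}$ comes out $(d-1)$-open (the two ``bad'' sets to subtract are $d$-closed rather than closed); the representation formula~(\ref{e.representation}) holds because it holds in each $2$-cut by Lemma~\ref{l.Y-companion}(c). For (d), polygonal connectedness of $\Omega\setminus K$ forces each $\omega\setminus K_\omega$ to be connected for every component $\omega$ of every relevant $2$-cut $\Omega_L$ — this is the slicewise analogue of Proposition~\ref{p.connectedness}, equivalence~(\ref{e.3}), applied inside $L$ — hence $C_{K_L,\Omega_L}=\Omega_L\setminus K_L$ for all $L\in\Gamma_{2,d}(X)$, so $C_{K,\Omega}^d=\Omega\setminus K$ and~(\ref{e.Y-companion}) gives $\tilde f|_{\Omega\setminus K}=f$. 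The finitely-open addendum is the same argument with $d=\infty$: replace $\Gamma_{2,d}(X)$ by $\Gamma_{2,\infty}(X)$ throughout, note $C_{K,\Omega}=\bigcup_{d\ge2}C_{K,\Omega}^d$ and $\Omega_{G,u}$ becomes finitely open, and for (d') use that polygonal connectedness of $X\setminus K$ descends to every finite-dimensional cut $L\setminus K_L$, whence $C_{K,\Omega}=\Omega\setminus K$.

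The main obstacle is the compatibility step in (a): reconciling two Hartogs companions built on varieties of dimension up to $d$ when $\dim L_1+\dim L_2$ may exceed $d$, so that no single intermediate variety of dimension $\le d$ contains both. The resolution is that~(\ref{e.representation}) already shows $\tilde f_L(a)$ is determined by $f$ along one complex line $a+\mathbb{C}u\subset L$ — so it suffices that $L_1$ and $L_2$ \emph{each} contain a $2$-plane through $a$ on which the companion can be computed, and agreement of $\tilde f_{L_1}(a)$ with $\tilde f_{L_2}(a)$ reduces to agreement of two line-integrals over a common line through $a$; one must only check that for any line $\ell\ni a$ contained in $\Omega$ there is a $2$-plane $L\supset\ell$ with $\Omega_L$ open and $K_L$ compact, which holds because $\Omega$ is $d$-open ($d\ge2$) and $K$ is $d$-compact. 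Making this line-by-line patching rigorous — in particular checking that the ``admissible $G$'' can be chosen uniformly enough to define $\tilde f$ on a full $(d-1)$-open neighborhood — is where the care is needed; everything else follows the template already established for $X=\mathbb{C}^n$.
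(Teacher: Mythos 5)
Your overall architecture (slice by varieties $L\in\Gamma_{2,d}(X)$, apply Lemma~\ref{l.Y-companion} on each slice, patch via the representation formula, use Theorem~\ref{t.identity} for uniqueness) is the paper's. But your proof of (d) and (d') rests on a false claim: polygonal connectedness of $\Omega\setminus K$ does \emph{not} force the cuts $\Omega_L\setminus K_L$ to be connected, nor does it give $C_{K_L,\Omega_L}=\Omega_L\setminus K_L$ for every $L\in\Gamma_{2,d}(X)$. This is exactly what the paper's footnote on slicing warns about (``intersections of a connected set with linear varieties may be disconnected''). Concretely, take $X=\mathbb{C}^3$, $\Omega=\mathbb{C}^3$, and $K$ the topological boundary of the closed unit polydisc inside $\mathbb{C}^2\times\{0\}$: then $\Omega\setminus K$ is polygonally connected ($K$ has real codimension $3$ in $\mathbb{C}^3$), yet the cut along $L=\mathbb{C}^2\times\{0\}$ is disconnected and points of the open polydisc lie outside $C_{K_L,\Omega_L}$. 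Proposition~\ref{p.connectedness}(\ref{e.1}) cannot be ``applied inside $L$'' because its hypothesis fails there. The correct route (and the paper's) never claims $C_{K,\Omega}^d=\Omega\setminus K$ in (d): one fixes $c$ with the ray $\delta=[1,\infty[\cdot c$ disjoint from $K$, shows that the set $C=\{x\,|\,[c,x]\subset\Omega\setminus K\}$ is $1$-open (so $C-c$ is real-absorbing), and for each $x\in C$ observes that $[c,x]\cup\delta$ is an unbounded connected subset of some $L\setminus K_L$ with $L\in\Gamma_2(X)$, hence lies in the unbounded component $K_L^{\mathrm{u}}$, giving $x\in C_L$ and $\tilde f(x)=f(x)$; Theorem~\ref{t.identity} on the polygonally connected $2$-open set $\Omega\setminus K$ then finishes. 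In (d') the same trick with a polygonal chain $\Lambda\cup\delta$ placed in a variety of \emph{sufficiently large} finite dimension does yield $C_{K,\Omega}=\Omega\setminus K$ — but only because the dimension of $L$ is allowed to grow with the chain, which is unavailable for fixed $d$.

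A secondary wobble: in the compatibility step of (a), when $L_1\cap L_2=\{a\}$ there is no ``common line through $a$'' over which two line-integrals could be compared, so your stated resolution does not close that case. The fix is cheap and is what the paper does: pick $u_i\in(L_i-a)\setminus\{0\}$, set $L_3:=a+\mathbb{C}u_1+\mathbb{C}u_2\in\Gamma_2(X)\subset\Gamma_{2,d}(X)$, and chain two applications of the line-intersection case, $\tilde f_{L_1}(a)=\tilde f_{L_3}(a)=\tilde f_{L_2}(a)$. No iteration through varieties of dimension exceeding $d$ is ever needed, since a single $2$-plane suffices as intermediary. Your treatment of the remaining parts — the $(d-1)$-openness of $\Omega_{G,u}$ via $d$-closedness of the two translated sets, the range inclusion and (\ref{e,inertia}) by restriction to $2$-cuts, and the finitely open addendum by taking the union over all $d$ — is consistent with the paper.
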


\begin{proof}
For every $L\in\Gamma_{2,d}(X)$, the open subset $\Omega_L$ of $L$, contains the compact $K_L$, and so the map $f_L:=f|_{\Omega_L\setminus K_L}\in\mathcal{H}(\Omega_L\setminus K_L,Y)$ has a unique Hartogs companion $\tilde f_L\in\mathcal{H}(\Omega_L,Y)$ as in Lemma~\ref{l.Y-companion}(a). If $\Omega_L=\emptyset$, both $f$ and $\tilde f$ are empty maps. To shorten notation, we write $C_{K_L,\Omega_L}$ simply as $C_L$.\\
(a). \emph{The uniqueness of $\tilde f$}. If (\ref{e.Y-companion}) holds for some map $\tilde f\in\mathcal{H}_\mathrm{G}(\Omega,Y)$, then for every $L\in\Gamma_{2,d}(X)$ we have $C_L\subset C_{K,\Omega}^d\cap\Omega_L$, and so $\tilde f|_{C_L}=f|_{C_L}=f_L|_{C_L}=\tilde f_L|_{C_L}$. By the uniqueness of the Hartogs companions $\tilde f_L$, it follows that
\begin{equation}\label{e.L-companion}\tilde f|_{\Omega_L}=\tilde f_L,\quad\mbox{for every }L\in\Gamma_{2,d}(X).\end{equation}
Since $\Omega=\bigcup_{L\in\Gamma_2(X)}\Omega_L$, the above condition yields the uniqueness of $\tilde f$.\\
\emph{The existence of $\tilde f$}. Let us show that a map $\tilde f\in\mathcal{H}_\mathrm{G}(\Omega,Y)$ can be defined by (\ref{e.L-companion}). We claim that for all varieties $L_1,L_2\in\Gamma_{2,d}(X)$ such that $L:=L_1\cap L_2\ne\emptyset$, we have $\tilde f_{L_1}|_{\Omega\cap L}=\tilde f_{L_2}|_{\Omega\cap L}$ (the Hartogs companions ``agree''). In order to prove this, let us fix such $L_1,L_2,L$, together with $a\in\Omega\cap L$. We next analyze two cases.\\
\emph{Case 1}. If $L\ne\{a\}$, then $L\in\Gamma_{1,d}(X)$ and $a\in\Omega_L$. By the representation (\ref{e.representation}) from Lemma~\ref{l.Y-companion}(c) we get $\tilde f_{L_1}(a)=\tilde f_{L_2}(a)$ (both are uniquely determined by $f|_{\Omega_{L}\setminus K}$).\\
\emph{Case 2}. If $L=\{a\}$, then $L_1\cap L_3\ne\{a\}\ne L_2\cap L_3$ for some $L_3\in\Gamma_2(X)$, such that $a\in L_3$. By the conclusion of the first case we deduce that $\tilde f_{L_1}(a)=\tilde f_{L_3}(a)=\tilde f_{L_2}(a)$. Our claim is proved. Consequently, there exists a unique map $\tilde f\in\mathcal{H}_\mathrm{G}(\Omega,Y)$ defined by (\ref{e.L-companion}). This map satisfies (\ref{e.Y-companion}), since for every $L\in\Gamma_{2,d}(X)$ we see that $\tilde f|_{C_L}=\tilde f_L|_{C_L}=f_L|_{C_L}=f|_{C_L}$. We thus have proved the existence and uniqueness of $\tilde f$ satisfying (\ref{e.Y-companion}). By (\ref{e.Y-range}) and (\ref{e.L-companion}) it follows that
\[\tilde f(\Omega)=\bigcup_{L\in\Gamma_{2,d}(X)}\tilde f_L(\Omega_L)\sqsubset\bigcup_{L\in\Gamma_{2,d}(X)}f_L(\Omega_L\setminus K_L)=f(\Omega\setminus K).\]
If $D\subset Y$ is a domain of holomorphy and $f(\Omega\setminus K)\subset D$, then by Lemma~\ref{l.Y-companion}(a) we deduce that $\tilde f_L(\Omega_L)\subset D$ for every $L\in\Gamma_{2,d}(X)$, which yields $\tilde f(\Omega)\subset D$.\\
(c). Let us fix $a\in\Omega$ and $u\in X\setminus\{0\}$, and an $(a,u)$-admissible set $G\subset\mathbb{C}$. As in the proof of Theorem~\ref{t.C-companion}(c) we see that $\Omega_{G,u}=A^\mathrm{c}\cap B^\mathrm{c}$, with $A:=\Omega^\mathrm{c}-\overline G\cdot u$ and $B:=K-G^\mathrm{c}\cdot u$ (where $G^\mathrm{c}=\mathbb{C}\setminus G$ and the other complements are considered in $X$). We next show that $A$ and $B$ are $(d-1)$-closed. To this aim, fix $L\in\Gamma_{1,d-1}(X)$. Clearly, $L+\mathbb{C}\cdot u\subset L'$ for some $L'\in\Gamma_{1,d}(X)$. It is easy to check that
\begin{eqnarray*}
A\cap L\!\!\!\!&=&\!\!\!\!(\Omega^\mathrm{c}-\overline G\cdot u)\cap L=(\Omega^\mathrm{c}\cap L'-\overline G\cdot u)\cap L,\\
B\cap L\!\!\!\!&=&\!\!\!\!(K-G^\mathrm{c}\cdot u)\cap L=(K\cap L'-G^\mathrm{c}\cdot u)\cap L.
\end{eqnarray*}
In the vector space $\mathrm{span}(L')$, all four sets $\overline G\cdot u$, $K\cap L'$, $\Omega^\mathrm{c}\cap L'$, $G^\mathrm{c}\cdot u$ are closed, and the first two are compact. Therefore, $\Omega^\mathrm{c}\cap L'-\overline G\cdot u$ and $K\cap L'-G^\mathrm{c}\cdot u$ are closed in both $\mathrm{span}(L')$ and $L'$. Hence $A\cap L$ and $B\cap L$ are closed in $L\subset L'$. As $L$ was arbitrary, we conclude that $A$ and $B$ are $(d-1)$-closed, and hence that $\Omega_{G,u}$ is $(d-1)$-open. The representation formula (\ref{e.representation}) follows at once by Lemma~\ref{l.Y-companion}(c).\\
(b). According to (a) (applied for $d=2$), the map $f_0:=f|_{\Omega_0\setminus K_0}$ has a unique Hartogs companion $\tilde f_0\in\mathcal{H}_\mathrm{G}(\Omega_0,Y)$. As in the proof of Theorem~\ref{t.C-companion}(b), by using the representation formula from (c) for both $\tilde f$ and $\tilde f_0$ we deduce that $\tilde f|_{\Omega_0}=\tilde f_0$.\\
(d). Assume $\Omega\setminus K$ is polygonally connected. Let us choose $c\in\Omega\setminus\{0\}$, such that $\delta:=[1,\infty[\cdot c\subset X\setminus K$. Set $C:=\{x\in X\,|\,[c,x]\subset\Omega\setminus K\}$. Thus $c\in C\subset\Omega\setminus K$ and the set $C-c$ is real-absorbing, since $\Omega$ is also $1$-open. We claim that $\tilde f|_C=f|_C$. In order to prove this, let us fix $x\in C$. Clearly, $\Delta:=[c,x]\cup\delta\subset L$ for some $L\in\Gamma_2(X)$. The subset $\Delta\subset L\setminus K$ is unbounded and connected in $L$, and so $\Delta\subset K_L^\mathrm{u}$. Hence $x\in[c,x]\subset\Omega_L\cap K_L^\mathrm{u}\subset C_L$. Since $\tilde f|_{\Omega_L}=\tilde f_L$, it follows that $\tilde f(x)=\tilde f_L(x)=f(x)$. Our claim is proved. As $\Omega\setminus K$ is $2$-open and polygonally connected, by Theorem~\ref{t.identity} we conclude that $\tilde f|_{\Omega\setminus K}=f$.\\
From now on we assume that $\Omega$ is finitely open and $K$ is finitely compact. For every integer $d\ge2$, by applying the already proved result we get the coincidence set $C_d:=C_{K,\Omega}^d$ and a unique map $\tilde f_d\in\mathcal{H}_\mathrm{G}(\Omega,Y)$, such that $\tilde f_d|_{C_d}=f|_{C_d}$. For arbitrary integers $s\ge d\ge2$ we have $C_d\subset C_s$, and so $\tilde f_s|_{C_d}=f|_{C_d}$, which leads by the uniqueness of the Hartogs companion $\tilde f_d$ to $\tilde f_s=\tilde f_d$. For $\tilde f:=\tilde f_2\in\mathcal{H}_\mathrm{G}(\Omega\setminus K,Y)$, we deduce that $\tilde f=\tilde f_d$ and $\tilde f|_{C_d}=f|_{C_d}$ for every $d\ge2$, and so $\tilde f|_{C_{K,\Omega}}=f|_{C_{K,\Omega}}$. It is easily seen that all statements from (a,b,c) hold for every $d\ge2$. Therefore, the set $\Omega_{G,u}$ from (c) is finitely open.\\
(d'). Let us fix $x\in\Omega\setminus K$. There exists $c\in\Omega\setminus\{0\}$, such that $\delta:=[1,\infty[\cdot c\subset X\setminus K$. As $X\setminus K$ is polygonally connected, there is a polygonal chain $\Lambda\subset X\setminus K$ joining $x$ to $c$. Clearly, $\Delta:=\Lambda\cup\delta\subset L$ for some $L\in\Gamma_{2,\infty}(X)$. The subset $\Delta\subset L\setminus K$ is unbounded and connected in $L$, and so $\Delta\subset K_L^\mathrm{u}$. Hence $x\in\Omega_L\cap K_L^\mathrm{u}\subset C_L$. We thus have proved the claimed set equality. It follows that $\tilde f|_{\Omega\setminus K}=f$.
\end{proof}

We next show a weaker version of the Hartogs Kugelsatz in arbitrary dimension. For $X=\mathbb{C}^n$ and $Y=\mathbb{C}$, this version is equivalent to Theorem~\ref{t.Kugelsatz}.

\begin{corollary}[Kugelsatz]\label{c.Kugelsatz}
Let a finitely open set $\Omega\subset X$ and a finitely compact subset $K\subset\Omega$. The \emph{Hartogs companion operator}
\[H_Y:\mathcal{H}_\mathrm{G}(\Omega\setminus K,Y)_\mathrm{k}\rightarrow\mathcal{H}_\mathrm{G}(\Omega,Y)_\mathrm{k}\,,\qquad H_Y(f)=\tilde f,\]
is linear, continuous, and surjective. A right inverse of $H_Y$ is the restriction operator
\[\rho:\mathcal{H}_\mathrm{G}(\Omega,Y)_\mathrm{k}\rightarrow\mathcal{H}_\mathrm{G}(\Omega\setminus K,Y)_\mathrm{k}\,,\qquad\rho(g)=g|_{\Omega\setminus K}.\]
If $X\setminus K$ is polygonally connected, then $H_Y$ and $\rho$ are isomorphisms of locally convex spaces.
\end{corollary}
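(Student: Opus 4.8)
The plan is to derive everything from Theorem~\ref{t.Y-companion} in its finitely open / finitely compact form, which — since $\Omega$ is finitely open and $K$ finitely compact, so that $\Omega\setminus K$ is finitely open and both $\mathcal{H}_\mathrm{G}(\Omega\setminus K,Y)_\mathrm{k}$, $\mathcal{H}_\mathrm{G}(\Omega,Y)_\mathrm{k}$ are well-defined locally convex spaces — already supplies for each $f\in\mathcal{H}_\mathrm{G}(\Omega\setminus K,Y)$ a \emph{unique} companion $\tilde f\in\mathcal{H}_\mathrm{G}(\Omega,Y)$ with $\tilde f|_{C_{K,\Omega}}=f|_{C_{K,\Omega}}$, together with the representation formula (\ref{e.representation}). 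Thus $H_Y$ is well-defined, and it is linear by this uniqueness: for scalars $\alpha_j$ and maps $f_j$, the combination $\sum_j\alpha_j\tilde f_j$ lies in $\mathcal{H}_\mathrm{G}(\Omega,Y)$ and matches $\sum_j\alpha_j f_j$ on $C_{K,\Omega}$, hence equals $\widetilde{\sum_j\alpha_j f_j}$. For the right-inverse relation, given $g\in\mathcal{H}_\mathrm{G}(\Omega,Y)$ both $g$ and $\widetilde{\rho(g)}$ belong to $\mathcal{H}_\mathrm{G}(\Omega,Y)$ and coincide with $\rho(g)=g|_{\Omega\setminus K}$ on $C_{K,\Omega}\subset\Omega\setminus K$; uniqueness forces $H_Y(\rho(g))=g$, so $H_Y$ is surjective (and $\rho$ injective). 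Continuity of $\rho$ is immediate: a generating seminorm $p_M$ of $\tau_\mathrm{k}$ on $\mathcal{H}_\mathrm{G}(\Omega\setminus K,Y)$, with $p$ a continuous seminorm on $Y$ and $M\subset(\Omega\setminus K)_L$ compact for some $L\in\Gamma_{2,\infty}(X)$, satisfies $M\subset\Omega_L$, hence it is also a generating seminorm of $\tau_\mathrm{k}$ on $\mathcal{H}_\mathrm{G}(\Omega,Y)$ and $p_M(\rho(g))=p_M(g)$.

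The main work is the continuity of $H_Y$, which I would obtain from the representation formula. Fix a generating seminorm $p_M$ of $\tau_\mathrm{k}$ on $\mathcal{H}_\mathrm{G}(\Omega,Y)$, so $M\subset\Omega_L$ is compact in some $L\in\Gamma_{2,\infty}(X)$ and $p$ is a continuous seminorm on $Y$. For each $a\in M$ choose $u_a\in X\setminus\{0\}$ and an $(a,u_a)$-admissible set $G_a$; by Theorem~\ref{t.Y-companion}(c) the set $\Omega_{G_a,u_a}$ is finitely open and contains $a$, so $\Omega_{G_a,u_a}\cap L$ is an open neighborhood of $a$ in $L$. Using compactness of $M$ in the finite-dimensional (hence metrizable) space $L$, I would extract a finite subcover and shrink it to compact sets $M_i\subset\Omega_{G_i,u_i}\cap L$ ($i=1,\dots,m$) with $M=\bigcup_{i=1}^m M_i$, and set $N_i:=M_i+\partial G_i\cdot u_i$. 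Each $N_i$ is compact (continuous image of $M_i\times\partial G_i$), contained in $\Omega\setminus K$ (since $\Omega_{G_i,u_i}+\partial G_i\cdot u_i\subset\Omega\setminus K$ by (c)) and contained in the finite-dimensional variety $L+\mathbb{C}\cdot u_i\in\Gamma_{2,\infty}(X)$, so $p_{N_i}$ is a generating seminorm of $\tau_\mathrm{k}$ on $\mathcal{H}_\mathrm{G}(\Omega\setminus K,Y)$. As $0\in G_i$ and $\partial G_i$ is compact, $\delta_i:=\min_{\zeta\in\partial G_i}|\zeta|>0$; writing $\ell_i$ for the total length of $\partial G_i$, for $x\in M_i$ the representation formula (\ref{e.representation}) for $\tilde f=H_Y(f)$ and the estimate $p\bigl(\int\cdots\bigr)\le\int p(\cdots)$ for $Y$-valued integrals give
\[p\bigl(\tilde f(x)\bigr)\le\frac{\ell_i}{2\pi\,\delta_i}\sup_{\zeta\in\partial G_i}p\bigl(f(x+\zeta u_i)\bigr)\le\frac{\ell_i}{2\pi\,\delta_i}\,p_{N_i}(f).\]
Taking the supremum over $x\in M$ yields $p_M(H_Y(f))\le c\sum_{i=1}^m p_{N_i}(f)$ for a constant $c$ independent of $f$, so $p_M\circ H_Y$ is dominated by a continuous seminorm on $\mathcal{H}_\mathrm{G}(\Omega\setminus K,Y)_\mathrm{k}$, and $H_Y$ is continuous.

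Finally, if $X\setminus K$ is polygonally connected, Theorem~\ref{t.Y-companion}(d') gives $\tilde f|_{\Omega\setminus K}=f$ for every $f$, i.e.\ $\rho\circ H_Y=\mathrm{id}$; combined with $H_Y\circ\rho=\mathrm{id}$ this makes $H_Y$ and $\rho$ mutually inverse bijections, and since both are linear and continuous they are isomorphisms of locally convex spaces. The step I expect to require the most care is the continuity of $H_Y$: one must cover the finite-dimensional compact $M$ by admissibility sets, pass to a closed shrinking, and verify that each integration set $N_i=M_i+\partial G_i\cdot u_i$ is a genuine finite-dimensional compact subset of $\Omega\setminus K$ so that $p_{N_i}$ is an admissible $\tau_\mathrm{k}$-seminorm — everything else is bookkeeping with the representation formula and the defining properties of the Hartogs companion.
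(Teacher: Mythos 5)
Your proposal is correct, and apart from one step it runs parallel to the paper's proof: linearity and the identity $H_Y\circ\rho=\mathrm{id}$ via uniqueness of the companion, and the final isomorphism statement via Theorem~\ref{t.Y-companion}(d'), are exactly the paper's arguments. The genuine difference is the continuity of $H_Y$. The paper gets it ``softly'' from the locality property Theorem~\ref{t.Y-companion}(b) together with the range inclusion (\ref{e.Y-range}): inside the fixed variety $L$ it chooses a compact $K_0$ and an open $\Omega_0$ with $K_L\subset\mathring K_0$, $M\subset\Omega_0$ and $\overline\Omega_0\subset\Omega_L$ compact, and then $\tilde f(M)\subset\overline{\mathrm{co}}\bigl(f\bigl(\overline\Omega_0\setminus\mathring K_0\bigr)\bigr)$ yields $p_M(\tilde f)\le p_{\,\overline\Omega_0\setminus\mathring K_0}(f)$, i.e.\ domination by a single generating seminorm with constant $1$ and no integral estimates. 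You instead work from the representation formula (\ref{e.representation}) of Theorem~\ref{t.Y-companion}(c): cover the compact $M\subset\Omega_L$ by finitely many cuts $\Omega_{G_i,u_i}\cap L$, shrink to compacts $M_i$, and estimate the Cauchy-type integral over $\partial G_i$, obtaining $p_M(H_Y(f))\le c\sum_i p_{N_i}(f)$ with $N_i=M_i+\partial G_i\cdot u_i\subset(\Omega\setminus K)\cap(L+\mathbb{C}\cdot u_i)$ compact and finite-dimensional, hence $p_{N_i}$ admissible for $\tau_\mathrm{k}$. All the points needing care check out: admissible $G_a$ exist because the $1$-cuts of $K$ are compact and those of $\Omega$ open, $\Omega_{G_i,u_i}$ is finitely open so its $L$-cut is open, $\min_{\zeta\in\partial G_i}|\zeta|>0$ since $0\in G_i$, and the seminorm--integral inequality is the same one the paper uses in Theorem~\ref{t.regularity}. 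So your route is more quantitative and bypasses part (b) and the convex-hull machinery, at the price of the covering argument and a constant $\max_i\ell_i/(2\pi\delta_i)$ instead of $1$; the paper's route is shorter but leans on the $\sqsubset$ range inclusion.
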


\begin{proof}
The linearity of $H:=H_Y$ is immediate. By Theorem~\ref{t.Y-companion}(a) we deduce that $H(\rho(g))=H(g|_{\Omega\setminus K})=g$ for every $g\in\mathcal{H}_\mathrm{G}(\Omega,Y)$. Hence $\rho$ is a (continuous) right inverse of $H$, which is surjective. In order to show that $H$ is continuous, let us fix a seminorm $p_M$ as in Definition~\ref{d.G-holomorphy}(e), for $L\in\Gamma_{2,\infty}(X)$, compact subset $M\subset\Omega_L$, and continuous seminorm $p:Y\rightarrow\mathbb{R}_+$. In $L$ let us choose the sets $K_0\subset\Omega_0$, with $K_0$ compact and $\Omega_0$ open, such that $K_L\subset\mathring K_0$ and $M\subset\Omega_0$, and $\overline\Omega_0\subset\Omega_L$ is compact. According to Theorem~\ref{t.Y-companion}(b), for every $f\in\mathcal{H}_\mathrm{G}(\Omega\setminus K,Y)$,  the map $\tilde f|_{\Omega_0}$ is the Hartogs companion of $f|_{\Omega_0\setminus K_0}$. By (\ref{e.Y-range}) we get successively
\[\tilde f(M)\subset\tilde f(\Omega_0)\sqsubset f(\Omega_0\setminus K_0)\subset f(\overline\Omega_0\setminus\mathring K_0),\qquad\tilde f(M)\subset\overline{\mathrm{co}}(f(\overline\Omega_0\setminus\mathring K_0)).\]
Consequently, $p_M(\tilde f)\le p_{\,\overline\Omega_0\setminus\mathring K_0}(f)$ for every $f\in\mathcal{H}_\mathrm{G}(\Omega\setminus K,Y)$. Therefore, $H$ is continuous. If $X\setminus K$ is polygonally connected, by Theorem~\ref{t.Y-companion}(d') it follows that $\rho(H(f))=\tilde f|_{\Omega\setminus K}=f$ for every $\mathcal{H}_\mathrm{G}(\Omega\setminus K,Y)$. We thus conclude that both $H$ and $\rho$ are continuous isomorphisms of locally convex spaces.
\end{proof}

\begin{remark}[Kugelsatz]\label{r.Kugelsatz}
The above corollary still holds for $d$-open $\Omega$ and $d$-compact $K$ (with $d\ge2$), the operator $H_Y:\mathcal{H}_\mathrm{G}(\Omega\setminus K,Y)_{\mathrm{k}(d)}\rightarrow\mathcal{H}_\mathrm{G}(\Omega,Y)_{\mathrm{k}(d)}$, and with the polygonal connectedness of $\Omega\setminus K$ instead of that of $X\setminus K$.
\end{remark}

\noindent Indeed, the proof remains valid if we consider $L\in\Gamma_{2,d}(X)$ (instead of $\Gamma_{2,\infty}(X)$) and we use Theorem~\ref{t.Y-companion}(d) (instead of (d')) for the connectedness of $\Omega\setminus K$.

\begin{proposition}[multiplication and composition property of Hartogs companions]\label{p.Y-composition}
Let $d\ge2$, a $d$-open set $\Omega\subset X$, and a $d$-compact subset $K\subset\Omega$. Then
\begin{description}
\item[(a)] We have $\widetilde{(\alpha f)}=\tilde\alpha\tilde f$ for all $\alpha\in\mathcal{H}_\mathrm{G}(\Omega\setminus K)$ and $f\in\mathcal{H}_\mathrm{G}(\Omega\setminus K,Y)$.
\item[(b)] Let a domain of holomorphy $D\subset Y$, a sequentially complete complex Hausdorff locally convex space $Z$, and a map $g\in\mathcal{H}(D,Z)$. Then
\[\widetilde{(g\circ f)}=g\circ\tilde f,\quad\mbox{for every }f\in\mathcal{H}_\mathrm{G}(\Omega\setminus K,Y)\mbox{ with }f(\Omega\setminus K)\subset D.\]
\end{description}
\end{proposition}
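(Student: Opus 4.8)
The plan is to obtain both identities from the uniqueness clause in Theorem~\ref{t.Y-companion}(a), after verifying that the claimed right-hand sides are G\^ateaux holomorphic on $\Omega$ and agree with the corresponding composite on the $d$-coincidence set $C_{K,\Omega}^d$ (resp.\ on $C_{K,\Omega}$ in the finitely open case). Throughout I would freely use that $\mathcal{H}_\mathrm{G}(\Omega,Y)$ is an $\mathcal{H}_\mathrm{G}(\Omega)$-module and, more generally, that a scalar G\^ateaux holomorphic factor times a vector-valued G\^ateaux holomorphic map is again G\^ateaux holomorphic (check on complex lines, using the product of holomorphic functions).

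\emph{Part (a).} Since $K$ is $d$-compact, its $1$- to $d$-cuts are closed in the respective varieties, so $\Omega\setminus K$ is $d$-open; hence $\alpha f\in\mathcal{H}_\mathrm{G}(\Omega\setminus K,Y)$ and $\tilde\alpha\tilde f\in\mathcal{H}_\mathrm{G}(\Omega,Y)$, and both composites admit Hartogs companions by Theorem~\ref{t.Y-companion}(a). On $C:=C_{K,\Omega}^d$ one has $\tilde\alpha|_C=\alpha|_C$ and $\tilde f|_C=f|_C$, whence $(\tilde\alpha\tilde f)|_C=(\alpha f)|_C$; the uniqueness of the Hartogs companion of $\alpha f$ then forces $\widetilde{(\alpha f)}=\tilde\alpha\tilde f$.

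\emph{Part (b).} The one substantive point — and the main obstacle — is that $g\circ\tilde f$ be \emph{defined} on all of $\Omega$, i.e.\ that $\tilde f(\Omega)\subset D$. This is precisely where the hypothesis that $D$ is a domain of holomorphy enters: from $f(\Omega\setminus K)\subset D$ the range-inertia implication (\ref{e,inertia}) of Theorem~\ref{t.Y-companion}(a) gives $\tilde f(\Omega)\subset D$ (note that the weaker $\tilde f(\Omega)\sqsubset f(\Omega\setminus K)$ alone would not suffice, since in general neither $\overline{\mathrm{co}}$ nor $\mathrm{co}$ of $f(\Omega\setminus K)$ stays inside $D$). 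I would then record that $g\circ f\in\mathcal{H}_\mathrm{G}(\Omega\setminus K,Z)$ and $g\circ\tilde f\in\mathcal{H}_\mathrm{G}(\Omega,Z)$: the restriction of a G\^ateaux holomorphic map to a complex line is a weakly, hence — by sequential completeness of $Y$ — strongly holomorphic $Y$-valued curve, and composing such a curve with the holomorphic map $g\in\mathcal{H}(D,Z)$ yields a holomorphic $Z$-valued curve, so $\psi\circ g\circ f$ and $\psi\circ g\circ\tilde f$ are holomorphic along lines for every $\psi\in Z^*$. Applying Theorem~\ref{t.Y-companion}(a) with $Z$ in place of $Y$ (admissible, since $Z$ satisfies the hypotheses of Setting~\ref{set.infinite}) furnishes the unique Hartogs companion $\widetilde{(g\circ f)}\in\mathcal{H}_\mathrm{G}(\Omega,Z)$ of $g\circ f$; since $\tilde f|_C=f|_C$ on $C:=C_{K,\Omega}^d$ we get $(g\circ\tilde f)|_C=(g\circ f)|_C$, and uniqueness gives $\widetilde{(g\circ f)}=g\circ\tilde f$. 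The only non-formal ingredients are this range-inertia step and the standard fact that one-variable weak holomorphy into a sequentially complete locally convex space is strong (together with the chain rule for holomorphic maps between locally convex spaces); everything else is the bookkeeping of the uniqueness clause, and in the finitely open/finitely compact case the argument is identical with $C_{K,\Omega}^d$ replaced by $C_{K,\Omega}$.
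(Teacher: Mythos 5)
Your proposal is correct and follows essentially the same route as the paper: both parts are reduced to the uniqueness clause of Theorem~\ref{t.Y-companion}(a) after checking coincidence on $C_{K,\Omega}^d$, and in (b) you correctly isolate the range-inertia implication (\ref{e,inertia}) as the step that makes $g\circ\tilde f$ well defined on all of $\Omega$. The only cosmetic difference is that where the paper cites Herv\'e's composition theorem to get $g\circ\tilde f\in\mathcal{H}_\mathrm{G}(\Omega,Z)$, you sketch the underlying one-variable argument (weak holomorphy into a sequentially complete space is strong, then apply the chain rule), which amounts to the same thing.
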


\begin{proof}
(a). For $\alpha,f$ as in (a), $\alpha f\in\mathcal{H}_\mathrm{G}(\Omega\setminus K,Y)$. Since $(\tilde\alpha\tilde f)|_{C_{K,\Omega}^d}=(\alpha f)|_{C_{K,\Omega}^d}$, by the uniqueness of the Hartogs companion we conclude that $\widetilde{(\alpha f)}=\tilde\alpha\tilde f$.\\
(b). Let $f\in\mathcal{H}_\mathrm{G}(\Omega\setminus K,Y)$ as in (b). Hence $\tilde f(\Omega)\subset D$, by Theorem~\ref{t.Y-companion}(a). We have $g\circ f\in\mathcal{H}_\mathrm{G}(\Omega\setminus K,Z)$ and $g\circ\tilde f\in\mathcal{H}_\mathrm{G}(\Omega,Z)$ (the composition property from Herv\'e\,\cite{herve}, Th.\,3.1.10, p.57). Since $(g\circ\tilde f)|_{C_{K,\Omega}^d}=(g\circ f)|_{C_{K,\Omega}^d}$, by the uniqueness of the Hartogs companion we conclude that $\widetilde{(g\circ f)}=g\circ\tilde f$.
\end{proof}

\section{Regularity of Hartogs companions}\label{s.regularity}

Regularity results for Hartogs companions may be obtained by showing that the representation (\ref{e.representation}) from Theorem~\ref{t.Y-companion}(c) holds locally, on neighborhoods of points. Let us note that under the hypothesis of Theorem~\ref{t.regularity} below, the sets $\Omega_{G,u}$ from Theorem~\ref{t.Y-companion}(c) are not necessarily open, unless $K$ is bounded.

Let us recall four holomorphy types stronger than G\^ateaux holomorphy:

\begin{definition}[holomorphy/analyticity types]\label{d.holomorphy}
Assume $X$ is a Hausdorff locally convex space. A map $f\in\mathcal{H}_\mathrm{G}(\Omega,Y)$ on an open subset $\Omega\subset X$ is called
\begin{description}
\item[(LB)] \emph{locally bounded holomorphic}, if and only if $f$ is locally bounded.
\item[(FR)] \emph{holomorphic} (or \emph{Fr\'echet analytic}\footnote{For several other conditions equivalent to holomorphy, see Herv\'e\,\cite{herve}, Def.\,3.1.1, p.52.}), if and only if $f$ is continuous.
\item[(HY)] \emph{hypoanalytic}, if and only if $f$ is hypocontinuous (that is, all restrictions $f|_M$ to compact subsets $M\subset\Omega$ are continuous).
\item[(MS)] \emph{Mackey/Silva holomorphic}, if and only if\footnote{This is not the definition, but an equivalent condition (Dineen\,\cite{dineen1}, Prop.\,2.18(a,b), p.61).} for all $a\in\Omega$ and bounded subset $B\subset X$, there exists $\varepsilon>0$, such that $f(\Omega\cap(a+\varepsilon B))$ is bounded.
\end{description}
The following inclusions hold (with the standard notations for the four vector spaces consisting of maps as in (LB)--(MS); see Dineen\,\cite{dineen1}, p.62):
\[\mathcal{H}_\mathrm{LB}(\Omega,Y)\subset\mathcal{H}(\Omega,Y)\subset\mathcal{H}_\mathrm{HY}(\Omega,Y)\subset\mathcal{H}_\mathrm{M}(\Omega,Y)\subset\mathcal{H}_\mathrm{G}(\Omega,Y).\]
\end{definition}

Without assuming that $X$ is locally convex, we next show that the conditions from the above definition are inherited from a map by its Hartogs companion.

\begin{theorem}[regularity]\label{t.regularity}
Assume $X$ is a Hausdorff topological vector space. Let an open set $\Omega\subset X$, a $2$-bounded closed set $K\subset\Omega$, a map $f\in\mathcal{H}_\mathrm{G}(\Omega\setminus K,Y)$, and its Hartogs companion $\tilde f\in\mathcal{H}_\mathrm{G}(\Omega,Y)$ as in Theorem~\ref{t.Y-companion}(a).
\begin{description}
\item[(a)] For arbitrary $a\in\Omega$ and $u\in X\setminus\{0\}$, and $(a,u)$-admissible set $G\subset\mathbb{C}$, let $D_{G,u}$ denote the component of $a$ in the open set $(\Omega^\mathrm{c}-\overline G\cdot u)^\mathrm{c}\cap(K-\partial G\cdot u)^\mathrm{c}$ (hence $D_{G,u}$ is an open neighborhood of $a$). Then $D_{G,u}+\partial G\cdot u\subset\Omega\setminus K$ and the representation formula \textup{(\ref{e.representation})} holds for every $x\in D_{G,u}$.
\item[(lb)] If $f$ is locally bounded, then so is $\tilde f$. If $p:Y\rightarrow\mathbb{R}_+$ is a continuous seminorm and $p\circ f$ is locally bounded, then so is $p\circ\tilde f$.
\item[(fr)] If $f$ is continuous, then so is $\tilde f$.
\item[(hy)] If $f$ is hypocontinuous, then so is $\tilde f$.
\item[(ms)] If $f$ satisfies the condition from Definition~\ref{d.holomorphy}(MS), then so does $\tilde f$.
\end{description}
If $X$ is locally convex, any holomorphy from Definition~\ref{d.holomorphy} is inherited by $\tilde f$ from $f$.
\end{theorem}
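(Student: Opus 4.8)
The plan is to first establish the local representation formula (a), and then deduce each regularity statement (lb)--(ms) from it by inspecting the Cauchy-type integral $\frac1{2\pi\mathrm i}\int_{\partial G}\frac{f(x+\zeta u)}\zeta\,\mathrm d\zeta$ on the neighborhood $D_{G,u}$ of a fixed point $a$. For (a): given $a\in\Omega$ and $u\in X\setminus\{0\}$, an $(a,u)$-admissible $G$ exists because $K\cap(a+\mathbb C\cdot u)$ is compact in the line $a+\mathbb C\cdot u$ (here $K$ is $2$-bounded and closed, hence its $1$-cuts are compact) and $(\Omega-a)\cap(\mathbb C\cdot u)$ is an open neighborhood of it. The set $D_{G,u}$ is defined as the component of $a$ in $(\Omega^{\mathrm c}-\overline G\cdot u)^{\mathrm c}\cap(K-\partial G\cdot u)^{\mathrm c}$; since $X$ is a topological vector space, $\overline G\cdot u$ and $\partial G\cdot u$ are compact, so (using closedness of $\Omega^{\mathrm c}$ and $K$) both $\Omega^{\mathrm c}-\overline G\cdot u$ and $K-\partial G\cdot u$ are closed, and $D_{G,u}$ is genuinely open (not merely $(d-1)$-open as in Theorem~\ref{t.Y-companion}(c), where $K$ was only $d$-compact). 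The inclusions $x+\overline G\cdot u\subset\Omega$ and $x+\partial G\cdot u\subset\Omega\setminus K$ for $x\in D_{G,u}$ are immediate set-theoretic rewritings as in the proof of Theorem~\ref{t.C-companion}(c). The representation formula itself then follows by applying Theorem~\ref{t.Y-companion}(c) on the $2$-dimensional slice through $x$ spanned by $u$ and one more direction, or more directly: for each $x\in D_{G,u}$ the line $x+\mathbb C\cdot u$ satisfies $C_{G,u}(x)$, so $x\in\Omega_{G,u}$ and (\ref{e.representation}) applies; the point is only that $D_{G,u}$ is a \emph{connected open} set on which the formula is valid.

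Given (a), the regularity transfer is routine. For (lb): fix $a$, pick any $u$ and $(a,u)$-admissible $G$; on a smaller neighborhood $D_0\subset D_{G,u}$ of $a$ with $\overline{D_0}$ "compact enough", the set $\overline{D_0}+\partial G\cdot u$ is contained in a set on which $f$ (or $p\circ f$) is bounded, say by $M$; then the standard estimate $p(\tilde f(x))\le\frac{1}{2\pi}\,M\cdot\operatorname{length}(\partial G)\cdot\sup_{\zeta\in\partial G}\frac1{|\zeta|}$ bounds $p\circ\tilde f$ near $a$. Here one uses that $0\notin\partial G$ (since $0\in G$, an open set), so $\inf_{\partial G}|\zeta|>0$. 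For (fr): continuity of $\tilde f$ at $a$ follows because $(x,\zeta)\mapsto f(x+\zeta u)$ is jointly continuous on $D_{G,u}\times\partial G$ (composition of the continuous addition/scalar-multiplication of $X$ with the continuous $f$) and $\partial G$ is compact, so the parameter integral is continuous in $x$; one should first shrink $D_{G,u}$ to a relatively compact neighborhood to get uniform continuity. For (hy): by the characterization that hypocontinuity is continuity on compacts, given a compact $M\subset\Omega$, cover it by finitely many $D_{G_i,u_i}$ and use that $f|_{(\overline{D_i}+\partial G_i\cdot u_i)}$ is continuous on the compact set $\overline{D_i}+\partial G_i\cdot u_i$ (a continuous image of a compact) to run the same parameter-integral continuity argument on each piece. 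For (ms): use the equivalent condition from Definition~\ref{d.holomorphy}(MS); given $a\in\Omega$ and bounded $B\subset X$, choose $u,G$; for $\varepsilon>0$ small enough $a+\varepsilon B\subset D_{G,u}$ and the set $(a+\varepsilon B)+\partial G\cdot u=a+\partial G\cdot u+\varepsilon B$ is contained in $\Omega\setminus K$ and, after possibly shrinking $\varepsilon$ and using the MS-property of $f$ at the finitely many... — more precisely, cover the compact $a+\partial G\cdot u$ by finitely many translates and apply the MS-condition of $f$ to get a single $\varepsilon$ with $f$ bounded on $a+\partial G\cdot u+\varepsilon B$; the integral estimate then bounds $\tilde f$ on $\Omega\cap(a+\varepsilon B)$.

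The main obstacle I anticipate is purely in part (a): one must check carefully that $D_{G,u}$ is \emph{open} and that the representation formula extends from the a priori domain $\Omega_{G,u}$ (which is only $(d-1)$-open, or finitely open in the finitely-compact case, when $K$ is merely $2$-compact) to all of the component $D_{G,u}$, using the extra hypothesis that $K$ is \emph{closed and $2$-bounded} — this is exactly what upgrades "$(d-1)$-open" to "open". The rest is a sequence of standard Cauchy-integral estimates on compact pieces of $\partial G$, with the only recurring technical point being to pass to a relatively compact sub-neighborhood of $a$ before invoking uniform continuity or uniform boundedness of $f$ on $\overline{D_0}+\partial G\cdot u$. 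Finally, the closing sentence — that each holomorphy type from Definition~\ref{d.holomorphy} is inherited when $X$ is locally convex — follows by combining the relevant regularity item ((lb), (fr), (hy), or (ms)) with the fact, from Theorem~\ref{t.Y-companion}(a), that $\tilde f\in\mathcal H_\mathrm G(\Omega,Y)$ already, since each of (LB), (FR), (HY), (MS) is just G\^ateaux holomorphy plus the corresponding regularity property.
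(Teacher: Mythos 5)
There is a genuine gap in part (a), and it sits exactly at the point you yourself flag as ``the main obstacle''. Your proposed resolution --- ``for each $x\in D_{G,u}$ the line $x+\mathbb{C}\cdot u$ satisfies $C_{G,u}(x)$, so $x\in\Omega_{G,u}$ and (\ref{e.representation}) applies'' --- is false: membership in $D_{G,u}$ only guarantees $x+\overline G\cdot u\subset\Omega$ and $(x+\partial G\cdot u)\cap K=\emptyset$, whereas $C_{G,u}(x)$ demands the much stronger containment $K\cap(x+\mathbb{C}\cdot u)\subset x+G\cdot u$. As $x$ moves inside the connected component $D_{G,u}$, a piece of $K$ can enter the line $x+\mathbb{C}\cdot u$ far outside $x+\overline G\cdot u$ without ever touching $x+\partial G\cdot u$. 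Concretely, in $X=\Omega=\mathbb{C}^2$ with $u=(1,0)$, $K=\bigl(\overline B_\mathbb{C}(0,1)\times\{0\}\bigr)\cup\bigl(\{5\}\times\overline B_\mathbb{C}(1,\tfrac12)\bigr)$, $a=(0,0)$ and $G=B_\mathbb{C}(0,2)$, the point $x=(0,1)$ lies in $D_{G,u}$ (the segment from $a$ to $x$ stays in $D$), yet $K\cap(x+\mathbb{C}\cdot u)=\{(5,1)\}\not\subset x+G\cdot u$, so $x\notin\Omega_{G,u}$. Hence $D_{G,u}\subset\Omega_{G,u}$ fails, and the validity of (\ref{e.representation}) at such $x$ is a nontrivial assertion that your argument does not establish. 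The paper closes this gap by defining $\bar f(x):=\frac1{2\pi\mathrm{i}}\int_{\partial G}\frac{f(x+\zeta u)}{\zeta}\,\mathrm{d}\zeta$ on all of $D_{G,u}$ (legitimate because $D_{G,u}+\partial G\cdot u\subset\Omega\setminus K$), proving $\bar f\in\mathcal{H}_\mathrm{G}(D_{G,u},Y)$ by differentiation under the integral sign as in Step~4 of the proof of Theorem~\ref{t.outer}, observing that $\bar f$ and $\tilde f$ agree on the $1$-open set $C:=\Omega_{G,u}\cap D_{G,u}\ni a$, and invoking the identity theorem (Theorem~\ref{t.identity}) on the connected open set $D_{G,u}$ to get $\bar f=\tilde f|_{D_{G,u}}$. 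Your write-up contains none of these three ingredients (holomorphy of the parameter integral, the coincidence set $C$, the identity-theorem propagation), so part (a) is not proved.

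The remainder of your proposal is sound and matches the paper: the topological part of (a) (closedness of $\Omega^\mathrm{c}-\overline G\cdot u$ and $K-\partial G\cdot u$ from compactness of $\overline G\cdot u$ and $\partial G\cdot u$, hence openness of $D_{G,u}$) is exactly the paper's argument, and once (a) is available, your Cauchy-integral estimates for (lb), (fr), (hy) and (ms) --- using $\inf_{\partial G}|\zeta|>0$, compactness of $x+\partial G\cdot u$, uniform continuity on the compact $(U\cap M)+\partial G\cdot u$ for hypocontinuity, and a finite cover of $a+\partial G\cdot u$ for the Mackey condition --- are the same as the paper's, as is the closing remark for locally convex $X$. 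So the only repair needed is to replace the false inclusion $D_{G,u}\subset\Omega_{G,u}$ by the identity-theorem extension argument.
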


\begin{proof}
Let $\mathcal{S}_Y$ denote the set of all continuous seminorms on $Y$.\\
(a). For fixed $a$, $u$, and $G$ as in (a), set $A:=\Omega^\mathrm{c}-\overline G\cdot u$ and $B_1:=K-\partial G\cdot u$, and $D:=A^\mathrm{c}\cap B_1^\mathrm{c}$. Thus $a\in A^\mathrm{c}\subset\Omega$. For every $x\in X$, we have the equivalences
\[x\in D\iff\left\{\begin{array}{l}x\notin\Omega^\mathrm{c}-\overline G\cdot u\\ x\notin K-\partial G\cdot u\end{array}\right.\iff\left\{\begin{array}{l}x+\overline G\cdot u\subset\Omega\\ x+\partial G\cdot u\subset\Omega\setminus K.\end{array}\right.\]
Hence $D+\partial G\cdot u\subset\Omega\setminus K$. Since $G$ is $(a,u)$-admissible, it follows that $a\in D$. All four sets $\overline G\cdot u$, $\partial G\cdot u$, $\Omega^\mathrm{c}$, $K$ are closed, and the first two are compact. Hence both $A$ and $B_1$ are closed, and so $D$ is open. Therefore, $D_{G,u}$ is an open neighborhood of $a$. Let us define the map
\[\bar f:D_{G,u}\rightarrow Y,\qquad\bar f(x)=\frac1{2\pi\mathrm{i}}\int_{\partial G}\frac{f(x+\zeta u)}\zeta\mathrm{d}\zeta.\]
By the representation formula (\ref{e.representation}) we get $\bar f|_C=\tilde f|_C$, where $C:=\Omega_{G,u}\cap D_{G,u}\ni a$. As in the proof of Theorem~\ref{t.outer} (Step 4) we deduce that $\bar f\in\mathcal{H}_\mathrm{G}(D_{G,u},Y)$. According to Theorem~\ref{t.Y-companion}(c), the set $\Omega_{G,u}$ is $1$-open, and hence so is $C$. Since the set $D_{G,u}$ is open and connected, and hence polygonally connected, by Theorem~\ref{t.identity} it follows that $\bar f=\tilde f|_{D_{G,u}}$. We conclude that the representation (\ref{e.representation}) holds for every $x\in D_{G,u}$.\\
(lb). Assume $f$ is locally bounded. For fixed $a\in\Omega$, let us choose $u$ and $G$ as in (a). Since $a+\partial G\cdot u$ is compact and $f$ is locally bounded, by a standard compactness argument we find a neighborhood $U\subset D_{G,u}$ of $a$, such that $P:=f(U+\partial G\cdot u)$ is bounded. We claim that $\tilde f(U)$ is bounded. In order to prove this, let us fix $p\in\mathcal{S}_Y$. As $0\in G$, we have $B_\mathbb{C}(0,r)\subset G$ for some $r>0$. By the representation formula (\ref{e.representation}) on $D_{G,u}\supset U$ it follows that
\[p(\tilde f(x))\le\frac{\ell(\partial G)}{2\pi r}\sup_{\zeta\in\partial G}p(f(x+\zeta u))\le\frac{\ell(\partial G)}{2\pi r}\sup p(P),\quad\mbox{for every }x\in U,\]
where $\ell(\partial G)$ denotes the length of the boundary $\partial G$ (which consists of finitely many piecewise $\mathcal{C}^1$ Jordan curves). We thus conclude that $\tilde f(U)$ is bounded, and hence that $\tilde f$ is locally bounded. The proof of the second statement from (lb) is similar.\\
(fr). In order to show that $\tilde f$ is continuous, let us fix $a\in\Omega$ and $p\in\mathcal{S}_Y$, and $\varepsilon>0$. For $a$, let us choose $u$ and $G$ as in (a), together with $r>0$, such that $B_\mathbb{C}(0,r)\subset G$. By the representation formula (\ref{e.representation}) from (a) it follows that
\[\tilde f(x)-\tilde f(a)=\frac1{2\pi\mathrm{i}}\int_{\partial G}\frac{f(x+\zeta u)-f(a+\zeta u)}\zeta\mathrm{d}\zeta,\quad\mbox{for every }x\in D_{G,u}.\]
Since $a+\partial G\cdot u$ is compact and $f$ is continuous, by a standard compactness argument we find a neighborhood $U_\varepsilon\subset D_{G,u}$ of $a$, such that
\[p(f(x+\zeta u)-f(a+\zeta u))<\frac{r\varepsilon}{\ell(\partial G)},\quad\mbox{for all }x\in U_\varepsilon,\,\zeta\in\partial G.\]
Consequently, for every $x\in U_\varepsilon$ we have
\[p(\tilde f(x)-\tilde f(a))\le\frac{\ell(\partial G)}{2\pi r}\sup_{\zeta\in\partial G}p(f(x+\zeta u)-f(a+\zeta u))\le\frac\varepsilon{2\pi}<\varepsilon.\]
Hence $\tilde f$ is continuous at $a$. It follows that $\tilde f$ is continuous on $\Omega$.\\
(hy). Let us fix a compact subset $M\subset\Omega$ and $a\in M$, together with $p\in\mathcal{S}_Y$ and $\varepsilon>0$. For $a$, let us choose $u$ and $G$ as in (a), and a closed neighborhood $U\subset D_{G,u}$ of $a$. The set $M_0:=U\cap M+\partial G\cdot u\subset\Omega\setminus K$ is compact, and so $f|_{M_0}$ is uniformly continuous. Consequently, there is a balanced neighborhood $V\subset X$ of $0$, such that $W_\varepsilon:=a+V\subset U$ and
\[p(f(y)-f(z))<\frac{r\varepsilon}{\ell(\partial G)},\quad\mbox{for all }y,z\in M_0,\mbox{ with }y-z\in V.\]
For every $x\in W_\varepsilon\cap M$ we see that $x+\partial G\cdot u\subset M_0$ and $x-a\in V$, which lead as in the proof of (fr) to $p(\tilde f(x)-\tilde f(a))\le\frac\varepsilon{2\pi}<\varepsilon$. Hence $\tilde f|_M$ is continuous at $a$. It follows that $\tilde f|_M$ is continuous, and hence that $\tilde f$ is hypocontinuous.\\
(ms). Let us fix $a\in\Omega$ and a nonempty balanced bounded subset $B\subset X$. For $a$, choose $u$ and $G$ as in (a). The balanced set $B_u:=B+B_\mathbb{C}(0,1)\cdot u$ is bounded and $B\subset B_u$. For each $\zeta\in\partial G$ and for $a_\zeta:=a+\zeta u\in\Omega\setminus K$, there is $\varepsilon_\zeta>0$, such that
\[a+\varepsilon_\zeta B_u\subset D_{G,u},\qquad f(a_\zeta+\varepsilon_\zeta B_u)\mbox{ is bounded}.\]
As $\partial G$ is compact, there is a finite subset $F\subset\partial G$, such that $\partial G\subset\bigcup_{\xi\in F}B_\mathbb{C}(\xi,\varepsilon_\xi)$. For $\varepsilon:=\min_{\xi\in F}\varepsilon_\xi>0$, we have $a+\varepsilon B\subset D_{G,u}$ and $P:=\bigcup_{\xi\in F}f(a_\xi+\varepsilon_\xi B_u)$ is bounded. Let us observe that
\begin{eqnarray*}
a+\varepsilon B+\partial G\cdot u\!\!\!\!&\subset&\!\!\!\!\bigcup_{\xi\in F}[a+\varepsilon B+B_\mathbb{C}(\xi,\varepsilon_\xi)\cdot u]=\bigcup_{\xi\in F}[a_\xi+\varepsilon B+B_\mathbb{C}(0,\varepsilon_\xi)\cdot u]\\
&\subset&\!\!\!\!\bigcup_{\xi\in F}[a_\xi+\varepsilon_\xi B+\varepsilon_\xi B_\mathbb{C}(0,1)\cdot u]=\bigcup_{\xi\in F}(a_\xi+\varepsilon_\xi B_u),
\end{eqnarray*}
and hence that $f(a+\varepsilon B+\partial G\cdot u)\subset P$. Since $P$ is bounded, as in the proof of (lb) (with $a+\varepsilon B$ instead of $U$) it follows that $\tilde f(a+\varepsilon B)$ is bounded.
\end{proof}

\section{Hartogs-type extension theorems.}\label{s.extension}

To avoid repetition, let us first note that unless Theorem~\ref{t.outer}, all results from this section hold together with the following:\\[1mm]
\textbf{Additional conclusions. }\emph{For $f$ and its unique extension $\tilde f$ as above, we have the range inclusion $\tilde f(\Omega)\sqsubset f(\Omega\setminus K)$. Furthermore, for every domain of holomorphy $D\subset Y$, we have the equivalence
\[f(\Omega\setminus K)\subset D \ \Longleftrightarrow \ \tilde f(\Omega)\subset D.\]
If $Y=\mathbb{C}$, then $\tilde f(\Omega)=f(\Omega\setminus K)$.}

\begin{remark}[connectedness conditions]\label{r.connectedness}
All three equivalences from Proposition~\ref{p.connectedness} also hold for a finitely open set $\Omega\subset X$ and a finitely compact subset $K\subset\Omega$.
\end{remark}

\noindent Indeed, the proof of Proposition~\ref{p.connectedness} remains valid if we replace $\mathbb{C}^n$ by $X_\mathrm{f}$ and we use Theorems~\ref{t.identity}, \ref{t.Y-companion}(d') instead of the classical identity theorem and Theorem~\ref{t.C-companion}(d).

The next result is the correspondent of Theorem~\ref{t.Hartogs=} in arbitrary dimension.

\begin{theorem}[extension/Kugelsatz]\label{t.extension=}
Let a finitely open set $\Omega\subset X$ and a finitely compact subset $K\subset\Omega$. The following four statements are equivalent (where in \textup{(i')} we consider $X$ equipped with the finite open topology $\tau_\mathrm{f}$).
\begin{description}
\item[(i)] Every map $f\in\mathcal{H}_\mathrm{G}(\Omega\setminus K,Y)$ has a (unique) extension $\tilde f\in\mathcal{H}_\mathrm{G}(\Omega,Y)$.
\item[(i')] Every locally constant map $g:\Omega\setminus K\rightarrow\mathbb{C}$ has an extension $\tilde g\in\mathcal{H}_\mathrm{G}(\Omega)$.
\item[(ii)] The restriction $\rho:\mathcal{H}_\mathrm{G}(\Omega,Y)_\mathrm{k}\rightarrow\mathcal{H}_\mathrm{G}(\Omega\setminus K,Y)_\mathrm{k}$ is an isomorphism of complex vector spaces.
\item[(iii)] $X\setminus K$ is polygonally connected.
\end{description}
In this case, $\rho$ is an isomorphism of locally convex spaces whose inverse is the Hartogs companion operator $H_Y$, and the additional conclusions hold.
\end{theorem}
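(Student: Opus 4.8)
The plan is to follow the scheme of the proof of Theorem~\ref{t.Hartogs=}, substituting the infinite-dimensional tools for their classical counterparts: $\mathbb{C}^n$ becomes $X_\mathrm{f}$, the classical identity theorem becomes Theorem~\ref{t.identity}, Theorem~\ref{t.C-companion} becomes Theorem~\ref{t.Y-companion}, and Corollary~\ref{c.Kugelsatz} takes care of the locally convex topologies. The chain (iii)$\Rightarrow$(ii)$\Rightarrow$(i)$\Rightarrow$(i') is the easy direction. If $X\setminus K$ is polygonally connected, Corollary~\ref{c.Kugelsatz} already asserts that $\rho$ and $H_Y$ are mutually inverse isomorphisms of locally convex spaces, which is (ii). Since such a $\rho$ is in particular a bijection, each $f\in\mathcal{H}_\mathrm{G}(\Omega\setminus K,Y)$ has a unique preimage, i.e. a unique extension $\tilde f=H_Y(f)$, giving (i). Finally (i)$\Rightarrow$(i') only uses that every ($\tau_\mathrm{f}$-)locally constant map $g:\Omega\setminus K\to\mathbb{C}$ belongs to $\mathcal{H}_\mathrm{G}(\Omega\setminus K)$, because its restriction to every complex line cut of the $1$-open set $\Omega\setminus K$ is locally constant, hence holomorphic.

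The substantive step is (i')$\Rightarrow$(iii), which I would prove by contraposition exactly as in Theorem~\ref{t.Hartogs=}. Suppose $X\setminus K$ is not polygonally connected. By Remark~\ref{r.connectedness}---the version of Proposition~\ref{p.connectedness} valid for finitely open $\Omega$ and finitely compact $K$---there is a component $\omega\in\Upsilon_\Omega$ for which $\omega\setminus K$ is not polygonally connected. Since components of finitely open sets are finitely open and $K$, being finitely compact, is finitely closed, both $\omega$ and $\omega\setminus K$ are finitely open; hence $\omega\setminus K=\omega_1\cup\omega_2$ for disjoint nonempty finitely open sets $\omega_1,\omega_2$. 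Choose a locally constant $g:\Omega\setminus K\to\mathbb{C}$ with $g|_{\omega_j}\equiv j$ ($j\in\{1,2\}$); by (i') it has an extension $\tilde g\in\mathcal{H}_\mathrm{G}(\Omega)$. Now $\omega$ is polygonally connected and $2$-open, $\omega_1\subset\omega$ is $1$-open (so $\omega_1-c$ is real-absorbing for any $c\in\omega_1$), and $(\tilde g-1)|_{\omega_1}\equiv0$; Theorem~\ref{t.identity} then forces $\tilde g|_\omega\equiv1$, and the same argument applied to $\omega_2$ forces $\tilde g|_\omega\equiv2$---a contradiction. Hence $X\setminus K$ is polygonally connected, completing the equivalence.

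For the remaining assertions, assume the equivalent conditions hold. Corollary~\ref{c.Kugelsatz} gives that $\rho$ and $H_Y=\rho^{-1}$ are isomorphisms of locally convex spaces. For the additional conclusions, note that the unique extension of any $f\in\mathcal{H}_\mathrm{G}(\Omega\setminus K,Y)$ is its Hartogs companion $\tilde f=H_Y(f)$, so $\tilde f(\Omega)\sqsubset f(\Omega\setminus K)$ and the implication $f(\Omega\setminus K)\subset D\Rightarrow\tilde f(\Omega)\subset D$ for domains of holomorphy $D\subset Y$ are immediate from Theorem~\ref{t.Y-companion}(a); the converse implication is trivial since $f=\tilde f|_{\Omega\setminus K}$ gives $f(\Omega\setminus K)=\tilde f(\Omega\setminus K)\subset\tilde f(\Omega)$, and when $Y=\mathbb{C}$ the relation $\sqsubset$ is ordinary inclusion, so the two inclusions yield $\tilde f(\Omega)=f(\Omega\setminus K)$.

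I expect the main obstacle to be purely a matter of bookkeeping in (i')$\Rightarrow$(iii): making sure that Remark~\ref{r.connectedness} genuinely applies under the stated hypotheses, that $\omega_1$ meets the real-absorbing requirement of Theorem~\ref{t.identity}, and that locally constant maps are G\^ateaux holomorphic on $1$-open sets---points that are routine once the correct infinite-dimensional replacements for the classical identity theorem and Hartogs companion are available.
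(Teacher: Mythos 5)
Your proposal follows the paper's proof in all essentials: (iii)$\Rightarrow$(ii)$\Rightarrow$(i) rests on Corollary~\ref{c.Kugelsatz} exactly as in the paper, the additional conclusions and the identification of the extension with the Hartogs companion come from Theorem~\ref{t.Y-companion}(a), and your (i')$\Rightarrow$(iii) is precisely the transplanted argument of Theorem~\ref{t.Hartogs=} via Remark~\ref{r.connectedness} and Theorem~\ref{t.identity} that the paper only sketches; the details you supply there ($\omega$, $\omega_1$, $\omega_2$ finitely open, $\omega_1-c$ real-absorbing, $\omega$ polygonally connected and $2$-open) are correct.

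The one slip is in (i)$\Rightarrow$(i'). Statement (i) concerns $Y$-valued maps for the fixed space $Y$ of Setting~\ref{set.infinite}, so observing that a $\tau_\mathrm{f}$-locally constant $g:\Omega\setminus K\rightarrow\mathbb{C}$ belongs to $\mathcal{H}_\mathrm{G}(\Omega\setminus K)$ is not by itself enough when $Y\neq\mathbb{C}$: you cannot apply (i) to $g$ directly. The missing (routine) step is the scalarization the paper uses, in the spirit of Proposition~\ref{p.range}(a): choose $y\in Y\setminus\{0\}$ and $\varphi\in Y^*$ with $\varphi(y)=1$, extend $f:=g\cdot y\in\mathcal{H}_\mathrm{G}(\Omega\setminus K,Y)$ by (i) to $\tilde f\in\mathcal{H}_\mathrm{G}(\Omega,Y)$, and set $\tilde g:=\varphi\circ\tilde f\in\mathcal{H}_\mathrm{G}(\Omega)$, so that $\tilde g|_{\Omega\setminus K}=\varphi\circ f=g$. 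With this two-line insertion your proof is complete and coincides with the paper's.
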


\begin{proof}
(i)$\Rightarrow$(ii). The extension of every $f\in\mathcal{H}_\mathrm{G}(\Omega\setminus K,Y)$ is the Hartogs companion $H_Y(f)$, and so $\rho(H_Y(f))=H_Y(f)|_{\Omega\setminus K}=f$. Consequently, for $H_Y$ the restriction $\rho$ is a left inverse, but also a right inverse, by Corollary~\ref{c.Kugelsatz}. Since both $\rho$ and $H_Y$ are $\tau_\mathrm{k}$-continuous, $\rho$ is an isomorphism of locally convex spaces.\\
(ii)$\Rightarrow$(i). Since the restriction operator $\rho$ is surjective, (i) holds.\\
(iii)$\Rightarrow$(i) obviously holds by Theorem~\ref{t.Y-companion}(a,d').\\
(i)$\Rightarrow$(i'). Let a locally constant map $g:\Omega\setminus K\rightarrow\mathbb{C}$. Let us choose $y\in Y\setminus\{0\}$ and $\varphi\in Y^*$, such that $\varphi(y)=1$. For $f:=g\cdot y\in\mathcal{H}_\mathrm{G}(\Omega\setminus K,Y)$ and its extension $\tilde f$ as in (i), we have $\tilde g:=\varphi\circ\tilde f\in\mathcal{H}_\mathrm{G}(\Omega)$ and $\tilde g|_{\Omega\setminus K}=\varphi\circ(\tilde f|_{\Omega\setminus K})=\varphi\circ f=g$.\\
(i')$\Rightarrow$(iii). The proof is the same as that of the corresponding implication from Theorem~\ref{t.Hartogs=} (with $\mathbb{C}^n$ replaced by $X_\mathrm{f}$) and uses Remark~\ref{r.connectedness} and Theorem~\ref{t.identity} instead of Proposition~\ref{p.connectedness} and the classical identity theorem.\\
The additional conclusions follow by Theorem~\ref{t.extension=}(a).
\end{proof}

The topological version of the above theorem allows $K$ to have nonempty interior (for instance, when $X$ is normable and $K$ is closed and bounded).

\begin{corollary}[extension/Kugelsatz]\label{c.extension=}
Assume $X$ is a Hausdorff locally convex space. Let an open set $\Omega\subset X$ and a finitely bounded\footnote{Here and in Corollary~\ref{c.2-extension} topological boundedness of $K$ is unnecessary and more restrictive.} closed subset $K\subset\Omega$. The following six statements are equivalent.
\begin{description}
\item[(i)] Every map $f\in\mathcal{H}_\mathrm{G}(\Omega\setminus K,Y)$ has an extension $\tilde f\in\mathcal{H}_\mathrm{G}(\Omega,Y)$.
\item[(ii)] Every map $f\in\mathcal{H}_\mathrm{M}(\Omega\setminus K,Y)$ has an extension $\tilde f\in\mathcal{H}_\mathrm{M}(\Omega,Y)$.
\item[(iii)] Every map $f\in\mathcal{H}_\mathrm{HY}(\Omega\setminus K,Y)$ has an extension $\tilde f\in\mathcal{H}_\mathrm{HY}(\Omega,Y)$.
\item[(iv)] Every map $f\in\mathcal{H}(\Omega\setminus K,Y)$ has an extension $\tilde f\in\mathcal{H}(\Omega,Y)$.
\item[(v)] Every map $f\in\mathcal{H}_\mathrm{LB}(\Omega\setminus K,Y)$ has an extension $\tilde f\in\mathcal{H}_\mathrm{LB}(\Omega,Y)$.
\item[(vi)] $X\setminus K$ is connected.
\end{description}
In this case, all extensions are unique and the additional conclusions hold.
\end{corollary}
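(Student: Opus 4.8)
The plan is to reduce the statement to three results already established: Theorem~\ref{t.extension=}, which settles the purely G\^ateaux part (and in particular the equivalence (i)$\iff$(vi) together with uniqueness and the additional conclusions); Theorem~\ref{t.Y-companion}, which provides the Hartogs companion $\tilde f\in\mathcal{H}_\mathrm{G}(\Omega,Y)$ of any $f\in\mathcal{H}_\mathrm{G}(\Omega\setminus K,Y)$ and, when $X\setminus K$ is polygonally connected, the identity $\tilde f|_{\Omega\setminus K}=f$; and Theorem~\ref{t.regularity}, by which the companion inherits each of the five holomorphy types of Definition~\ref{d.holomorphy} from $f$ when $X$ is locally convex.

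First I would translate the hypotheses into the form required by those theorems. Since $\Omega$ is open in the locally convex space $X$, it is finitely open; since $K$ is closed and finitely bounded, every cut $K_L$ with $L\in\Gamma_{1,\infty}(X)$ is closed and bounded in the finite-dimensional variety $L$, hence compact, so $K$ is finitely compact. Moreover $X\setminus K$ is open in $X$, and an open connected subset of a locally convex space is polygonally connected; consequently statement (vi) is equivalent to ``$X\setminus K$ is polygonally connected''. Therefore Theorem~\ref{t.extension=} applies directly and yields the equivalence (i)$\iff$(vi), the uniqueness of all extensions, and the additional conclusions in the case where these equivalent conditions hold. It remains to slot (ii)--(v) into the equivalence.

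For ``(vi)$\Rightarrow$(ii),(iii),(iv),(v)'', let $f$ belong to one of $\mathcal{H}_\mathrm{M}(\Omega\setminus K,Y)$, $\mathcal{H}_\mathrm{HY}(\Omega\setminus K,Y)$, $\mathcal{H}(\Omega\setminus K,Y)$, $\mathcal{H}_\mathrm{LB}(\Omega\setminus K,Y)$. In particular $f\in\mathcal{H}_\mathrm{G}(\Omega\setminus K,Y)$, so Theorem~\ref{t.Y-companion}(a),(d') produces its Hartogs companion $\tilde f\in\mathcal{H}_\mathrm{G}(\Omega,Y)$ with $\tilde f|_{\Omega\setminus K}=f$; and the hypotheses of Theorem~\ref{t.regularity} being met ($X$ a Hausdorff topological vector space, $\Omega$ open, $K$ $2$-bounded and closed), that theorem places $\tilde f$ in the same space as $f$. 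For the converses ``(ii)$\Rightarrow$(vi)'', $\dots$, ``(v)$\Rightarrow$(vi)'' I would use one common obstruction. If $X\setminus K$ is not polygonally connected, then by Remark~\ref{r.connectedness} (the version for finitely open $\Omega$ and finitely compact $K$ of equivalence~(\ref{e.3}) in Proposition~\ref{p.connectedness}) some component $\omega\in\Upsilon_\Omega$ has $\omega\setminus K$ not polygonally connected, hence --- being open in $X$ --- disconnected: $\omega\setminus K=\omega_1\cup\omega_2$ with $\omega_1,\omega_2$ disjoint, nonempty, open. Fix $y_0\in Y\setminus\{0\}$ and $\varphi\in Y^*$ with $\varphi(y_0)=1$, and let $g:\Omega\setminus K\to\mathbb{C}$ be locally constant with $g|_{\omega_j}\equiv j$ ($j=1,2$) and $g\equiv0$ on the remaining components. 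Then $f:=g\cdot y_0$ is locally bounded holomorphic, so $f\in\mathcal{H}_\mathrm{LB}(\Omega\setminus K,Y)\subset\mathcal{H}(\Omega\setminus K,Y)\subset\mathcal{H}_\mathrm{HY}(\Omega\setminus K,Y)\subset\mathcal{H}_\mathrm{M}(\Omega\setminus K,Y)$; under whichever of (ii)--(v) is assumed, $f$ has an extension $\tilde f\in\mathcal{H}_\mathrm{G}(\Omega,Y)$, so $h:=\varphi\circ\tilde f|_\omega\in\mathcal{H}_\mathrm{G}(\omega)$ agrees with $g$ on $\omega\setminus K$. Since $\omega$ is $2$-open and --- being open and connected in a locally convex space --- polygonally connected, while $\omega_1$ and $\omega_2$ are $1$-open, Theorem~\ref{t.identity} applied to $h-1$ on $\omega$ with coincidence set $\omega_1$ forces $h\equiv1$, whereas applied to $h-2$ with coincidence set $\omega_2$ it forces $h\equiv2$; this contradiction shows that $X\setminus K$ is polygonally connected, i.e. (vi) holds.

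The argument is routine once the right citations are assembled, and I expect the only genuine subtlety to be the logical layout: one cannot cycle through (i)--(v) by exploiting the inclusions $\mathcal{H}_\mathrm{LB}\subset\mathcal{H}\subset\mathcal{H}_\mathrm{HY}\subset\mathcal{H}_\mathrm{M}\subset\mathcal{H}_\mathrm{G}$, since the property ``every map extends'' is not monotone in the function class. The workable structure is (vi)$\Rightarrow$each implication via regularity inheritance, and each$\Rightarrow$(vi) via a single locally constant test map that lies in the smallest class $\mathcal{H}_\mathrm{LB}$ and hence in all of them. The remaining points --- transferring ``open $\Rightarrow$ finitely open'', ``closed $+$ finitely bounded $\Rightarrow$ finitely compact'', and ``open connected in a locally convex space $\Rightarrow$ polygonally connected'' --- are exactly the inputs demanded by Theorems~\ref{t.extension=}, \ref{t.Y-companion} and~\ref{t.regularity}, so no further work is needed there.
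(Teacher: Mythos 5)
Your proposal is correct and follows essentially the same route as the paper: Theorem~\ref{t.extension=} gives (i)$\Leftrightarrow$(vi) together with uniqueness and the additional conclusions, Theorem~\ref{t.regularity} upgrades the Hartogs companion to each of the classes in (ii)--(v), and a locally constant test map of the form $g\cdot y_0$ (which lies in $\mathcal{H}_\mathrm{LB}$ and hence in all four classes) handles the converses. The only cosmetic difference is that the paper closes the loop by observing that any of (ii)--(v) implies condition (i') of Theorem~\ref{t.extension=}, whereas you re-derive the contradiction directly via Remark~\ref{r.connectedness} and Theorem~\ref{t.identity}; the content is identical.
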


\begin{proof}
As $\Omega$ and $X\setminus K$ are finitely open, $K$ is finitely compact, and (vi) is equivalent to the polygonal connectedness of $X\setminus K$, by Theorem~\ref{t.extension=} we see that (i)$\Leftrightarrow$(vi). That (i) yields all (ii)--(v) follows by Theorem~\ref{t.regularity}. Finally, any of (ii)--(v) implies the condition from Theorem~\ref{t.extension=}(i') (locally constant maps $g:\Omega\setminus K\rightarrow\mathbb{C}$ are of all four holomorphy types), which is equivalent to (i), by Theorem~\ref{t.extension=}. We thus have proved the equivalence of all six statements. In (i)--(v) the extension of $f$ is the Hartogs companion, which if unique.
\end{proof}

The following extension theorem only involves $2$-cuts of both sets $K$ and $\Omega$. This result is more general than Theorem~\ref{t.Hartogs} even for $X=\mathbb{C}^n$ and $Y=\mathbb{C}$, since $2$-compact sets may not be closed or bounded and $2$-open sets may not be open.

\begin{theorem}[2-cuts extension]\label{t.2-extension}
Let a $2$-open set $\Omega\subset X$ and a $2$-compact subset $K\subset\Omega$. If $\Omega\setminus K$ is polygonally connected, then every map $f\in\mathcal{H}_\mathrm{G}(\Omega\setminus K,Y)$ has a unique extension $\tilde f\in\mathcal{H}_\mathrm{G}(\Omega,Y)$ and the additional conclusions hold.
\end{theorem}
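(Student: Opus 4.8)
The plan is to read off the theorem from Theorem~\ref{t.Y-companion} applied with slicing dimension $d=2$. Since $\Omega$ is $2$-open and $K$ is $2$-compact, that theorem applies verbatim with $d=2$, in which case $\Gamma_{2,d}(X)=\Gamma_2(X)$ and the $2$-coincidence set is $C_{K,\Omega}^2=\bigcup_{L\in\Gamma_2(X)}C_{K_L,\Omega_L}\subset\Omega\setminus K$. Thus for any fixed $f\in\mathcal{H}_\mathrm{G}(\Omega\setminus K,Y)$, Theorem~\ref{t.Y-companion}(a) produces the Hartogs companion $\tilde f\in\mathcal{H}_\mathrm{G}(\Omega,Y)$ with $\tilde f|_{C_{K,\Omega}^2}=f|_{C_{K,\Omega}^2}$, and Theorem~\ref{t.Y-companion}(d) --- whose hypothesis is exactly that $\Omega\setminus K$ be polygonally connected --- upgrades this to $\tilde f|_{\Omega\setminus K}=f$. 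Hence $\tilde f$ is the sought extension.

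For uniqueness I would \emph{not} invoke the identity theorem on $\Omega$ directly, since the hypotheses only grant polygonal connectedness of $\Omega\setminus K$, not of $\Omega$; instead I rely on the uniqueness already built into the Hartogs companion. If $g\in\mathcal{H}_\mathrm{G}(\Omega,Y)$ is another extension of $f$, then from $C_{K,\Omega}^2\subset\Omega\setminus K$ we get $g|_{C_{K,\Omega}^2}=f|_{C_{K,\Omega}^2}$, so the uniqueness clause of Theorem~\ref{t.Y-companion}(a) forces $g=\tilde f$. Equivalently, one may slice: for every $L\in\Gamma_2(X)$ with $\Omega_L\ne\emptyset$, the restriction $g|_{\Omega_L}\in\mathcal{H}(\Omega_L,Y)$ is a holomorphic extension of $f|_{\Omega_L\setminus K_L}$, hence coincides with the Hartogs companion $\tilde f|_{\Omega_L}$ by Lemma~\ref{l.Y-companion}(a), and $\Omega=\bigcup_{L\in\Gamma_2(X)}\Omega_L$.

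The additional conclusions require no extra work. The range inclusion $\tilde f(\Omega)\sqsubset f(\Omega\setminus K)$ and the implication $f(\Omega\setminus K)\subset D\Rightarrow\tilde f(\Omega)\subset D$ for a domain of holomorphy $D\subset Y$ are part of Theorem~\ref{t.Y-companion}(a), while the reverse implication is trivial since $f(\Omega\setminus K)=\tilde f(\Omega\setminus K)\subset\tilde f(\Omega)$. For $Y=\mathbb{C}$ the preorder $\sqsubset$ is ordinary inclusion by Notation~\ref{n.range}(c), so the chain $f(\Omega\setminus K)=\tilde f(\Omega\setminus K)\subset\tilde f(\Omega)\subset f(\Omega\setminus K)$ gives $\tilde f(\Omega)=f(\Omega\setminus K)$. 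In short, there is no genuine obstacle here: the entire content sits in Theorem~\ref{t.Y-companion} with $d=2$, and the only point meriting care is to phrase uniqueness through the Hartogs companion rather than through a global identity principle on $\Omega$.
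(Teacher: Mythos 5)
Your proposal is correct and follows exactly the paper's route: the paper proves this theorem in one line by citing Theorem~\ref{t.Y-companion}(a,d) with $d=2$, which is precisely what you do, with the additional conclusions likewise read off from part (a). Your care in deriving uniqueness from the uniqueness clause of the Hartogs companion (rather than from an identity principle on $\Omega$, which is not known to be polygonally connected) is a sound and welcome elaboration of the same argument.
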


\begin{proof}
The conclusion is immediate, by Theorem~\ref{t.Y-companion}(a,d) for $d=2$.
\end{proof}

\begin{corollary}[holomorphic extensions]\label{c.2-extension}
Assume $X$ is a Hausdorff locally convex space. Let an open set $\Omega\subset X$ and a $2$-bounded closed subset $K\subset\Omega$, such that $\Omega\setminus K$ is connected. Then the statements (i)--(v) from Corollary~\ref{c.extension=} hold together with the additional conclusions.
\end{corollary}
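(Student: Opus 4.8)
The plan is to deduce this corollary from Theorem~\ref{t.2-extension} (for the G\^ateaux statement~(i) and the additional conclusions) together with Theorem~\ref{t.regularity} (for the four stronger holomorphy types~(ii)--(v)), running parallel to the proof of Corollary~\ref{c.extension=} but invoking Theorem~\ref{t.2-extension} in place of Theorem~\ref{t.extension=}, so that only $2$-cuts of $K$ and $\Omega$ are used --- which is exactly what lets $K$ be merely $2$-bounded rather than finitely bounded.

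First I would check that the hypotheses of Theorem~\ref{t.2-extension} hold. A Hausdorff locally convex space is in particular a Hausdorff topological vector space; the open set $\Omega$ is $2$-open; and $K$, being closed and $2$-bounded, is $2$-compact, since for every $L\in\Gamma_2(X)$ the $2$-cut $K_L=K\cap L$ is closed in $L$ (because $K$ is closed) and bounded in $L$ (because $K$ is $2$-bounded), hence compact in the finite-dimensional space $L$. Moreover $\Omega\setminus K$ is open (as $\Omega$ is open and $K$ is closed) and connected by hypothesis, hence polygonally connected, since open connected subsets of a topological vector space are polygonally connected (the set of points of $\Omega\setminus K$ that can be joined to a fixed one by a polygonal chain inside $\Omega\setminus K$ is both open and closed in $\Omega\setminus K$, using balanced neighborhoods of $0$). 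By Theorem~\ref{t.2-extension}, every $f\in\mathcal{H}_\mathrm{G}(\Omega\setminus K,Y)$ then has a unique extension $\tilde f\in\mathcal{H}_\mathrm{G}(\Omega,Y)$, namely its Hartogs companion as in Theorem~\ref{t.Y-companion}(a) for $d=2$, and the additional conclusions (the range inclusion $\tilde f(\Omega)\sqsubset f(\Omega\setminus K)$, the equivalence $f(\Omega\setminus K)\subset D\Leftrightarrow\tilde f(\Omega)\subset D$ for every domain of holomorphy $D\subset Y$, and $\tilde f(\Omega)=f(\Omega\setminus K)$ when $Y=\mathbb{C}$) follow at once. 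This gives statement~(i).

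For (ii)--(v) I would apply Theorem~\ref{t.regularity}, whose hypotheses --- $X$ a Hausdorff topological vector space, $\Omega$ open, $K\subset\Omega$ a $2$-bounded closed set, and $\tilde f$ the Hartogs companion of $f$ from Theorem~\ref{t.Y-companion}(a) --- are precisely those in force here. Since $X$ is locally convex, the final assertion of Theorem~\ref{t.regularity} says that each of the holomorphy types (LB), (FR), (HY), (MS) of Definition~\ref{d.holomorphy} passes from $f$ to $\tilde f$. Hence if $f$ lies in $\mathcal{H}_\mathrm{M}(\Omega\setminus K,Y)$, $\mathcal{H}_\mathrm{HY}(\Omega\setminus K,Y)$, $\mathcal{H}(\Omega\setminus K,Y)$, or $\mathcal{H}_\mathrm{LB}(\Omega\setminus K,Y)$, then its Hartogs companion $\tilde f$ lies respectively in $\mathcal{H}_\mathrm{M}(\Omega,Y)$, $\mathcal{H}_\mathrm{HY}(\Omega,Y)$, $\mathcal{H}(\Omega,Y)$, or $\mathcal{H}_\mathrm{LB}(\Omega,Y)$ and still extends $f$. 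Uniqueness of each such extension is inherited from the G\^ateaux case, because a map of any of these stronger types that extends $f$ is in particular a G\^ateaux holomorphic extension of $f$, hence equals $\tilde f$; this is exactly the pattern of the proof of Corollary~\ref{c.extension=}.

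I do not anticipate a genuine obstacle: the argument is a direct combination of the two cited theorems. The only two points needing a word of justification are the implication from ``closed and $2$-bounded'' to ``$2$-compact'' and the polygonal connectedness of the open connected set $\Omega\setminus K$, both of which are routine.
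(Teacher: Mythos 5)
Your proposal is correct and follows exactly the paper's own route: the paper proves this corollary by applying Theorem~\ref{t.2-extension} and then Theorem~\ref{t.regularity}, which is precisely your argument, with the routine verifications (closed $+$ $2$-bounded $\Rightarrow$ $2$-compact, open connected $\Rightarrow$ polygonally connected) spelled out.
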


\begin{proof}
The conclusion follows by applying successively Theorems~\ref{t.2-extension} and~\ref{t.regularity}.
\end{proof}

The above four results deal with inner G\^ateaux holomorphic extensions; we call these ``inner'', since for every variety $L\in\Gamma_2(X)$ the set $K_L\subset\Omega_L$ is compact in $L$ (we may say that $K_L$ is a ``compact hole'' in $\Omega_L$). We next establish a theorem suitable for outer (that is, not inner) extensions. For some fixed $u\in X\setminus\{0\}$, this result only involves particular cuts with linear varieties of the form
\[L_a(u):=a+\mathbb{C}\cdot u,\qquad L_a(u,v):=a+\mathbb{C}\cdot u+\mathbb{C}\cdot v,\]
and with closed \emph{linear $2$-strips} parallel to $u$, defined by
\[L_a(u,v[\varepsilon]):=L_a(u)+\overline B_\mathbb{C}(0,\varepsilon)\cdot v.\]

\begin{theorem}[outer extension]\label{t.outer}
Let the sets $K\subset\Omega\subset X$, such that $\Omega$ and $\Omega\setminus K$ are $2$-open\footnote{Here and in Corollaries~\ref{c.outer}, \ref{c.Hartogs}, the set $K$ may not be $2$-closed or $2$-bounded.} and $\Omega\setminus K$ is polygonally connected. Assume there exists $u\in X\setminus\{0\}$, such that:
\begin{description}
\item[(i)] For all $a\in\Omega$ and $v\in X$, there exists $\varepsilon>0$, such that $K\cap L_a(u,v[\varepsilon])$ is compact in $L_a(u,v)$.
\item[(ii)] $K\cap L_c(u)=\emptyset$ for some $c\in\Omega$.
\end{description}
Then every map $f\in\mathcal{H}_\mathrm{G}(\Omega\setminus K,Y)$ has a unique extension $\tilde f\in\mathcal{H}_\mathrm{G}(\Omega,Y)$. We have $\tilde f(\Omega)\subset\overline{\mathrm{co}}(f(\Omega\setminus K))$. If $Y=\mathbb{C}$, then $\tilde f(\Omega)=f(\Omega\setminus K)$.
\end{theorem}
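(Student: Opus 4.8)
The plan is to build $\tilde f$ one complex line at a time along the fixed direction $u$, patch the pieces, and then check that the patched map is G\^ateaux holomorphic and extends $f$. Throughout I would use the notation of Theorem~\ref{t.C-companion}(c).

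\emph{Step 1 (preliminaries).} First, hypothesis (i) already forces $K\cap L_x(u)$ to be compact \emph{in the line $L_x(u)$} for every $x\in\Omega$: since $L_x(u)\subset L_x(u,v[\varepsilon])$ for any auxiliary $v$, the set $K\cap L_x(u)$ lies inside the compact $K\cap L_x(u,v[\varepsilon])$ and is closed in $L_x(u)$; being a compact subset of the open set $\Omega\cap L_x(u)$, it is relatively compact there, so an $(x,u)$-admissible set $G$ exists for every $x\in\Omega$ (cover $(K\cap L_x(u))\cup\{x\}$, with $L_x(u)\simeq\mathbb{C}$, by finitely many small discs with closures in $\Omega\cap L_x(u)$). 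The delicate preliminary, which I expect to be the main obstacle, is that for every bounded open $G\subset\mathbb{C}$ with piecewise-$\mathcal{C}^1$ Jordan-curve boundary the sets $\Omega^\mathrm{c}-\overline G\cdot u$, $K-\partial G\cdot u$ and $K-G^\mathrm{c}\cdot u$ are $1$-closed in $X$. For the first, cut by a line $L$, note $L+\mathbb{C}\cdot u$ has dimension $\le 2$, and use $2$-closedness of $\Omega^\mathrm{c}$ with compactness of $\overline G\cdot u$. For the other two, a line $L\parallel u$ reduces to ``compact minus closed'' in $\mathbb{C}$ via the compactness of $K\cap L$ just established, while for $L\not\parallel u$ one passes to the plane $L'=L+\mathbb{C}\cdot u$, applies (i) on $L'$ to obtain a $u$-strip in which $K\cap L'$ is bounded, and runs the standard limiting argument inside that strip — this is the only place (i) is genuinely used, and it is where the absence of any closedness hypothesis on $K$ has to be absorbed.

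\emph{Steps 2--4 (construction and G\^ateaux holomorphy).} For $x\in\Omega$ set $\tilde f(x)$ equal to the value at $x$ of the Hartogs $1$-companion of $f|_{(\Omega\cap L_x(u))\setminus K}$ on the line $L_x(u)$ (Definition~\ref{d.companion1}, Remark~\ref{r.dim=1}); equivalently $\tilde f(x)=\frac1{2\pi\mathrm{i}}\int_{\partial G}\frac{f(x+\zeta u)}{\zeta}\mathrm{d}\zeta$ for any $(x,u)$-admissible $G$, independence of $G$ being the consistency statement following Definition~\ref{d.companion1}. By construction $\tilde f$ restricted to any $L_a(u)\cap\Omega$ is a $1$-companion, hence holomorphic. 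To get G\^ateaux holomorphy, fix $a\in\Omega$ and an $(a,u)$-admissible $G$, and put $D:=(\Omega^\mathrm{c}-\overline G\cdot u)^\mathrm{c}\cap(K-\partial G\cdot u)^\mathrm{c}$, which by Step~1 is $1$-open, contains $a$, lies in $\Omega$, and satisfies $x+\partial G\cdot u\subset\Omega\setminus K$ for $x\in D$. On $D$ define $\bar f(x)=\frac1{2\pi\mathrm{i}}\int_{\partial G}\frac{f(x+\zeta u)}{\zeta}\mathrm{d}\zeta$. The integration lemma (this is ``Step 4'', the one invoked in the proof of Theorem~\ref{t.regularity}(a)) is that $\bar f\in\mathcal{H}_\mathrm{G}(D,Y)$: given $a_0\in D$, $v\in X$, $\varphi\in Y^*$, the $2$-openness of $(\Omega\setminus K)\cap L_{a_0}(u,v)$ and compactness of $\{0\}\times\partial G$ inside it produce $r>0$ and a neighbourhood $N$ of $\partial G$ with $\{|\lambda|<r\}\times N$ mapped into $(\Omega\setminus K)\cap L_{a_0}(u,v)$ by $(\lambda,\zeta)\mapsto a_0+\lambda v+\zeta u$; there $(\lambda,\zeta)\mapsto\varphi\big(f(a_0+\lambda v+\zeta u)\big)$ is holomorphic, so $\lambda\mapsto\varphi(\bar f(a_0+\lambda v))$ is holomorphic by continuity and Morera/Fubini. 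Finally, on $\Omega_{G,u}=\{x\in\Omega\,|\,C_{G,u}(x)\mbox{ holds}\}=D\cap(K-G^\mathrm{c}\cdot u)^\mathrm{c}$, which by Step~1 is a $1$-open neighbourhood of $a$, the set $G$ is genuinely $(x,u)$-admissible, hence $\tilde f=\bar f$ there; thus $\tilde f$ coincides locally with a G\^ateaux holomorphic map, so $\tilde f\in\mathcal{H}_\mathrm{G}(\Omega,Y)$.

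\emph{Step 5 (extension, uniqueness, range).} To prove $\tilde f|_{\Omega\setminus K}=f$, use (ii): on any plane $P=L_c(u,v)$ the compact set $K\cap P$ lies in a $u$-strip $\{|\mu|\le\varepsilon\}$ of $P$ and misses $L_c(u)=\{\mu=0\}$, hence $K\cap P\cap\{|\mu|\le\varepsilon'\}=\emptyset$ for some $\varepsilon'>0$; consequently, for $x$ in the disc $\{c+\beta w\,|\,|\beta|<\varepsilon'_w\}\cap\Omega$ through $c$ in a direction $w$ one has $K\cap L_x(u)=\emptyset$, whence $\tilde f(x)=f(x)$ by the vector-valued Cauchy formula applied to $\zeta\mapsto f(x+\zeta u)$, holomorphic near $\overline G$. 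So $\tilde f=f$ on a set $C\subset\Omega\setminus K$ with $c\in C$ and $C-c$ real-absorbing, and Theorem~\ref{t.identity} (applicable since $\Omega\setminus K$ is $2$-open and polygonally connected) gives $\tilde f|_{\Omega\setminus K}=f$. For uniqueness: any G\^ateaux holomorphic extension $g$ of $f$, restricted to a line $L_a(u)\cap\Omega$, is a holomorphic extension of $f$ across the compact set $K\cap L_a(u)$, hence equals the $1$-companion, i.e. equals $\tilde f$, on that line; letting $a$ range over $\Omega$ yields $g=\tilde f$. The range statements then follow from Proposition~\ref{p.range}(a) with $C=\Omega_0=\Omega\setminus K$ (legitimate once existence and uniqueness are in hand): it gives $\tilde f(\Omega)\subset\overline{\mathrm{co}}(f(\Omega\setminus K))$ in general and $\tilde f(\Omega)\subset f(\Omega\setminus K)$ when $Y=\mathbb{C}$, which together with $f(\Omega\setminus K)=\tilde f(\Omega\setminus K)\subset\tilde f(\Omega)$ gives $\tilde f(\Omega)=f(\Omega\setminus K)$.
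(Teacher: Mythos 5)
Your construction is essentially the paper's own proof: the paper also builds $\tilde f$ line by line as the Hartogs $1$-companion of $f$ on each $L_a(u)$, proves $1$-openness of the sets $\Omega_{G,u}$ from hypothesis (i), obtains G\^ateaux holomorphy by differentiating the integral representation inside the $2$-dimensional cut of $\Omega\setminus K$ by $a+\mathbb{C}v+\mathbb{C}u$, identifies $\tilde f$ with $f$ on the set $\{x\in\Omega\,|\,K\cap L_x(u)=\emptyset\}$ using (ii) and Theorem~\ref{t.identity}, and derives the range statements from Proposition~\ref{p.range}(a). Your uniqueness argument (any G\^ateaux holomorphic extension reproduces, on each line parallel to $u$, the Cauchy integral over $\partial D'$ for $D'\supset(K\cap L_a(u))\cup\{a\}$, hence equals the $1$-companion) differs from the paper's, which instead shows that $\Omega$ is polygonally connected and applies Theorem~\ref{t.identity} on $\Omega$; your variant is correct and a bit more economical.

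One claim in your Step 1 is false as stated and must be relativized: $K-\partial G\cdot u$ and $K-G^{\mathrm{c}}\cdot u$ need not be $1$-closed in $X$. Hypothesis (i) gives compactness of $K$ only in strips around lines $L_a(u)$ with $a\in\Omega$, so your limiting argument works only when the limit point of the $1$-cut lies in $\Omega$; for limit points outside $\Omega$ there is no control, since $K$ is not assumed closed. Concretely, take $X=\mathbb{C}^2$, $u=(1,0)$, $\Omega=\{|z_2|<1\}$, $K=\{0\}\times\{w\,:\,\tfrac12\le|w|<1\}$, and $G=B_\mathbb{C}(0,\tfrac1{10})$: all hypotheses of the theorem hold, yet the cut of $K-\partial G\cdot u$ with the line $\{-\tfrac1{10}\}\times\mathbb{C}$ equals $\{-\tfrac1{10}\}\times\{\tfrac12\le|w|<1\}$, which is not closed in that line. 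What is true (and is exactly the paper's Step 1) is the relative statement that $\Omega\setminus(K-H\cdot u)$ is $1$-open for every closed $H\subset\mathbb{C}$; since your sets $D$ and $\Omega_{G,u}$ lie inside $\Omega$ through the factor $(\Omega^{\mathrm{c}}-\overline G\cdot u)^{\mathrm{c}}$, this relative version is all your argument actually uses, so the repair is cosmetic. A similar small slip occurs in your Step 5: (i) does not make $K\cap L_c(u,v)$ compact or confined to a strip; only $K\cap L_c(u,v[\varepsilon])$ is compact, but that together with (ii) still gives $K\cap L_c(u,v[\varepsilon'])=\emptyset$ for some $\varepsilon'>0$ by the same limiting argument (the paper obtains this directly from its Step 1 applied with $H=\mathbb{C}$).
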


\begin{proof}
There is no loss of generality in assuming $K\ne\emptyset$. The uniqueness of $\tilde f$ will follow by Theorem~\ref{t.identity}, if we show that $\Omega\setminus K\ne\emptyset$ and $\Omega$ is polygonally connected. We claim that every $a\in\Omega$ can be joined in $\Omega$ by a linear segment to some $a'\in\Omega\setminus K\ne\emptyset$. Clearly, it suffices to prove this for $a\in K$. For such $a$, in $L=L_a(u)\in\Gamma_1(X)$ the set $\Omega_L$ is open and $K_L$ is compact (which follows from (i) for $v=0$), with $a\in K_L\subset\Omega_L$. Therefore, there exists $t>0$, such that $a':=a+tu\notin K_L$ and $[a,a']\subset\Omega_L$. Our claim is proved. Hence for arbitrarily fixed $a,b\in\Omega$, we have $[a,a']\cup[b,b']\subset\Omega$ for some $a',b'\in\Omega\setminus K$. Since $\Omega\setminus K$ is polygonally connected, $a'$ and $b'$ can be joined by a polygonal chain $\Lambda\subset\Omega$, and so $[a,a']\cup\Lambda\cup[b',b]\subset\Omega$. We thus conclude that $\Omega$ is polygonally connected. Now the uniqueness of $\tilde f$ follows by Theorem~\ref{t.identity}. The proof of the existence part is divided into five steps, among which the first four are only based on the condition (i).\\
\emph{Step 1}. We first show the following needed technical property:\\
\emph{For every nonempty closed subset $H\subset\mathbb{C}$ the set $S:=\Omega\setminus(K-H\cdot u)$ is $1$-open.}\\
On the contrary, suppose there exists a $1$-cut $S_L$, which is not open in $L\in\Gamma_1(X)$. Hence there is a sequence $(s_n')_{n\in\mathbb{N}}\subset L\setminus S$, convergent in $L$ to some $s\in S_L\subset\Omega_L$. Thus $L=L_s(v)$ for some $v\in X\setminus\{0\}$. Since $\Omega_L$ is open in $L$ and $s\in\Omega_L$, by the above convergence we may assume $(s_n')_{n\in\mathbb{N}}\subset\Omega_L\setminus S\subset K-H\cdot u$. Consequently, there exist two sequences $(k_n)_{n\in\mathbb{N}}\subset K$ and $(h_n)_{n\in\mathbb{N}}\subset H$, such that
\begin{equation}\label{e.sequence}s_n'=k_n-h_nu,\quad\mbox{for every }n\in\mathbb{N}.\end{equation}
According to (i), there exists $\varepsilon>0$, such that $K_\varepsilon:=K\cap L_s(u,v[\varepsilon)]$ is compact in $L_s(u,v)$. As $\lim_{n\to\infty}s_n'=s$ in $L$, by taking subsequences we may also assume $(s_n')_{n\in\mathbb{N}}\subset s+\overline B_\mathbb{C}(0,\varepsilon)\cdot v$. By (\ref{e.sequence}) we see that $k_n=s_n'+h_nu\in K_\varepsilon$ for every $n\in\mathbb{N}$. Since $K_\varepsilon$ is compact in $L_s(u,v)$, by taking subsequences we may assume the existence of the limit $k:=\lim_{n\to\infty}k_n\in K_\varepsilon\subset K$ in $L_s(u,v)$. By (\ref{e.sequence}) we deduce that $(h_n)_{n\in\mathbb{N}}\subset H$ converges and $h:=\lim_{n\to\infty}h_n\in H$. Now a passage to the limit in $L_s(u,v)$ in the equality (\ref{e.sequence}) forces $s=k-hu\in K-H\cdot u$, which contradicts $s\in S$. We thus have proved the claimed property.\\
\emph{Step 2}. We next show that all sets $\Omega_{G,u}$ defined as in Theorem~\ref{t.Y-companion}(c) are $1$-open. Let us fix $a\in\Omega$ and an $(a,u)$-admissible set $G\subset\mathbb{C}$. Such sets $G$ indeed exist, since in $L=L_a(u)\in\Gamma_1(X)$ the set $K_L\subset\Omega_L$ is compact and $\Omega_L\ne\emptyset$ is open. As in the proof of Theorem~\ref{t.Y-companion}(c) we see that $\Omega_{G,u}=A^\mathrm{c}\cap(\Omega\setminus B)$, where $A=\Omega^\mathrm{c}-\overline G\cdot u$ and $B=K-G^\mathrm{c}\cdot u$, and $A^\mathrm{c}$ is $1$-open. As $G^\mathrm{c}$ is closed in $\mathbb{C}$, according to the property from Step 1, $\Omega\setminus B$ is also $1$-open. We thus conclude that $\Omega_{G,u}$ is $1$-open.\\
\emph{Step 3}. We next define the map $\tilde f$. As the set $\Gamma_1(\Omega,u):=\{L_a(u)\,|\,a\in\Omega\}\subset\Gamma_1(X)$ consists of mutually disjoint complex lines, $(\Omega_L)_{L\in\Gamma_1(\Omega,u)}$ is a partition of $\Omega$. For every $L\in\Gamma_1(\Omega,u)$, the open set $\Omega_L\ne\emptyset$ contains the compact $K_L$. Consequently, the restriction $f_L:=f|_{\Omega_L\setminus K_L}\in\mathcal{H}(\Omega_L\setminus K_L,Y)$ has a unique Hartogs 1-companion $\tilde f_L\in\mathcal{H}(\Omega_L,Y)$ as in Remark~\ref{r.dim=1}. Let us define the map
\[\tilde f:\Omega\rightarrow Y,\qquad\tilde f|_{\Omega_L}=\tilde f_L\quad\mbox{for every }L\in\Gamma_1(\Omega,u).\]
Hence for arbitrarily fixed $a\in\Omega$ and $(a,u)$-admissible set $G\subset\mathbb{C}$, the restriction $\tilde f|_{\Omega_{G,u}}$ may be represented by (\ref{e.representation}).\\
\emph{Step 4}. We next show that $\tilde f\in\mathcal{H}_\mathrm{G}(\Omega,Y)$. To this aim, let us fix $a\in\Omega$, together with $v\in X\setminus\{0\}$ and $\varphi\in Y^*$. Choose an $(a,u)$-admissible set $G\subset\mathbb{C}$. Since $\Omega_{G,u}$ is $1$-open and $a\in\Omega_{G,u}$, there exists $r>0$, such that $a+B_\mathbb{C}(0,r)\cdot v\subset\Omega_{G,u}$. Hence $a+B_\mathbb{C}(0,r)\cdot v+\partial G\cdot u\subset\Omega\setminus K$. As $\Omega\setminus K$ is $2$-open, the set
\[D:=\{(\lambda,\zeta)\in\mathbb{C}^2\,|\,a+\lambda v+\zeta u\in\Omega\setminus K\}\]
is open in $\mathbb{C}^2$ and $B_\mathbb{C}(0,r)\times\partial G\subset D$. Let us define the map
\[F\in\mathcal{H}(D),\qquad F(\lambda,\zeta)=(\varphi\circ f)(a+\lambda v+\zeta u).\]
By the integral representation (\ref{e.representation}) of $\tilde f|_{\Omega_{G,u}}$ it follows that
\[(\varphi\circ\tilde f)(a+\lambda v)=\frac1{2\pi\mathrm{i}}\int_{\partial G}\frac{(\varphi\circ f)(a+\lambda v+\zeta u)}\zeta\mathrm{d}\zeta=\frac1{2
\pi\mathrm{i}}\int_{\partial G}\frac{F(\lambda,\zeta)}\zeta\mathrm{d}\zeta,\]
for every $\lambda\in B_\mathbb{C}(0,r)$. Since differentiation under the integral sign with respect to $\lambda$ holds (and $a,v,\varphi$ were arbitrarily fixed), we conclude that $\tilde f\in\mathcal{H}_\mathrm{G}(\Omega,Y)$.\\
\emph{Step 5}. We finally show that $\tilde f|_{\Omega\setminus K}=f$. Choose $c\in\Omega$ as in (ii) and set
\[C:=\{x\in\Omega\,|\,K\cap L_x(u)=\emptyset\}.\]
It is easy to check that $c\in C$ and $C=\Omega\setminus(K-\mathbb{C}\cdot u)\subset\Omega\setminus K$. By the property from Step 1 we deduce that $C$ is $1$-open, and so $C-c$ is a real-absorbing set. In order to prove that $\tilde f|_C=f|_C$, let us fix $x\in C$. For $L=L_x(u)\in\Gamma_1(\Omega,u)$ we have $K_L=\emptyset$, and so $\tilde f_L=f_L$. By the definition of $\tilde f$ it follows that $\tilde f|_{\Omega_L}=f_L=f|_{\Omega_L}$, which yields $\tilde f(x)=f(x)$. Hence $\tilde f|_C=f|_C$. Since $\Omega\setminus K$ is $2$-open and polygonally connected, by Theorem~\ref{t.identity} we conclude that $\tilde f|_{\Omega\setminus K}=f$. The proof of the existence and uniqueness of the extension $\tilde f$ is now complete. Since every $f\in\mathcal{H}_\mathrm{G}(\Omega\setminus K,Y)$ has a unique extension $\tilde f$, the statements on $\tilde f(\Omega)$ (the general range inclusion and the equality for $Y=\mathbb{C}$) follow by Proposition~\ref{p.range}(a).
\end{proof}

The following corollary strengthens Theorem~\ref{t.2-extension} by weakening its compactness assumption on $K$ (only $2$-cuts parallel to a given vector need to be compact).

\begin{corollary}[outer extension]\label{c.outer}
Let the sets $K\subset\Omega\subset X$, such that $\Omega$ and $\Omega\setminus K$ are $2$-open and $\Omega\setminus K$ is polygonally connected. Assume there exists $u\in X\setminus\{0\}$, such that the cut $K\cap L_a(u,v)$ is compact for all $a\in\Omega$ and $v\in X$. Then every map $f\in\mathcal{H}_\mathrm{G}(\Omega\setminus K,Y)$ has a unique extension $\tilde f\in\mathcal{H}_\mathrm{G}(\Omega,Y)$ and the additional conclusions hold.
\end{corollary}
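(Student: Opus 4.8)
The plan is to deduce Corollary~\ref{c.outer} from Theorem~\ref{t.outer} by checking its two structural hypotheses for the given vector $u$, and then to obtain the additional conclusions from Proposition~\ref{p.excision}. We may assume $\Omega\ne\emptyset$ (if $\Omega=\emptyset$ there is nothing to prove); then in fact $\Omega\setminus K\ne\emptyset$, since otherwise $\Omega=K$, so for any $2$-plane $L=L_a(u,v)$ with $a\in\Omega$ and $v\notin\mathbb{C}\cdot u$ the cut $\Omega_L=K_L$ would be a nonempty set that is both open and compact in $L\cong\mathbb{C}^2$ --- impossible.

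First I would verify Theorem~\ref{t.outer}(i): since $L_a(u,v[\varepsilon])=L_a(u)+\overline B_{\mathbb{C}}(0,\varepsilon)\cdot v$ is a closed subset of $L_a(u,v)$, the set $K\cap L_a(u,v[\varepsilon])$ is a closed subset of the compact set $K\cap L_a(u,v)$, hence compact in $L_a(u,v)$, for every $\varepsilon>0$; so (i) holds, e.g. with $\varepsilon=1$. The more delicate point is Theorem~\ref{t.outer}(ii), namely producing $c\in\Omega$ with $K\cap L_c(u)=\emptyset$. Let $\pi\colon X\to X/(\mathbb{C}\cdot u)$ be the quotient linear surjection; then $K\cap L_c(u)=\emptyset$ is equivalent to $\pi(c)\notin\pi(K)$, and $K\subset\Omega$ gives $\pi(K)\subset\pi(\Omega)$. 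Suppose, for contradiction, that $\pi(K)=\pi(\Omega)$. Fix $a\in\Omega$ and $v\notin\mathbb{C}\cdot u$, and put $L:=L_a(u,v)$. Using that $L$ is parallel to $u$ one checks $\pi(K)\cap\pi(L)=\pi(K_L)$ and $\pi(\Omega)\cap\pi(L)=\pi(\Omega_L)$; moreover $\pi|_L\colon L\to\pi(L)$ is an open map between the finite-dimensional varieties $L\cong\mathbb{C}^2$ and $\pi(L)\cong\mathbb{C}$, so $\pi(\Omega_L)$ is open in $\pi(L)$ while $\pi(K_L)$ is compact in $\pi(L)$. Then $\pi(K)=\pi(\Omega)$ would force $\pi(K_L)=\pi(\Omega_L)$, a nonempty set that is simultaneously open and compact in $\pi(L)\cong\mathbb{C}$ --- impossible. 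Hence $\pi(K)\subsetneq\pi(\Omega)$, and any $c\in\Omega$ with $\pi(c)\in\pi(\Omega)\setminus\pi(K)$ satisfies (ii).

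With (i) and (ii) established, Theorem~\ref{t.outer} provides, for every $f\in\mathcal{H}_{\mathrm G}(\Omega\setminus K,Y)$, a unique extension $\tilde f\in\mathcal{H}_{\mathrm G}(\Omega,Y)$. It remains to upgrade the range statement of Theorem~\ref{t.outer} (which only gives $\tilde f(\Omega)\subset\overline{\mathrm{co}}(f(\Omega\setminus K))$) to the full additional conclusions. For this I would apply Proposition~\ref{p.excision} to the $2$-open set $\Omega$, the map $\tilde f\in\mathcal{H}_{\mathrm G}(\Omega,Y)$, and the subset $K$: for each $a\in K\subset\Omega$ and any $v\notin\mathbb{C}\cdot u$, the cut $K\cap L_a(u,v)$ is a compact $2$-cut of $K$ through $a$, so $K$ has particular compact $2$-cuts through all its points. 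Proposition~\ref{p.excision} then yields $\tilde f(\Omega)\sqsubset\tilde f(\Omega\setminus K)$, the equivalence $\tilde f(\Omega)\subset D\iff\tilde f(\Omega\setminus K)\subset D$ for every domain of holomorphy $D\subset Y$, and $\tilde f(\Omega)=\tilde f(\Omega\setminus K)$ when $Y=\mathbb{C}$; since $\tilde f|_{\Omega\setminus K}=f$ gives $\tilde f(\Omega\setminus K)=f(\Omega\setminus K)$, these are precisely the additional conclusions.

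I expect the main obstacle to be the verification of Theorem~\ref{t.outer}(ii): unlike (i), it is not a formal consequence of the compactness of the cuts, and the argument must exploit that the $2$-cuts $K_L$ parallel to $u$ are simultaneously compact while the corresponding cuts $\Omega_L$ are open --- it is exactly the clash "compact versus nonempty open" in the one-dimensional quotient line $\pi(L)$ that forces $\Omega$ to contain a line in direction $u$ disjoint from $K$.
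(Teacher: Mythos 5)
Your proof is correct and follows essentially the same route as the paper: reduce to Theorem~\ref{t.outer} and then obtain the additional conclusions from the compact $2$-cuts $K\cap L_a(u,v)$ parallel to $u$ (your appeal to Proposition~\ref{p.excision} applied to $\tilde f$ is equivalent to the paper's direct use of (\ref{e.Y-range}) and Theorem~\ref{t.inertia}). Your explicit verification of hypothesis (ii) of Theorem~\ref{t.outer} --- projecting along $u$ and playing the compactness of $\pi(K_L)$ against the nonempty openness of $\pi(\Omega_L)$ in $\pi(L)\simeq\mathbb{C}$ --- is a correct filling-in of a step the paper asserts without proof.
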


\begin{proof}
Since conditions (i,ii) from Theorem~\ref{t.outer} are fulfilled, every $f\in\mathcal{H}_\mathrm{G}(\Omega\setminus K,Y)$ has a unique extension $\tilde f\in\mathcal{H}_\mathrm{G}(\Omega,Y)$. We next prove the additional conclusions.\\
\emph{The range inclusion}. For every fixed $a\in\Omega$, there exists $L\in\Gamma_2(X)$, such that $a\in L$ and $L\parallel u$. Since $K_L$ is compact and $\Omega_L$ is open in $L$, the restriction $\tilde f|_{\Omega_L}$ is the Hartogs companion of $f|_{\Omega_L\setminus K_L}$. By (\ref{e.Y-range}) it follows that
\[\tilde f(a)\in\tilde f(\Omega_L)\sqsubset f(\Omega_L\setminus K_L)\subset f(\Omega\setminus K).\]
As $a$ was arbitrary, we conclude that $\tilde f(\Omega)\sqsubset f(\Omega\setminus K)$.\\
\emph{The equivalence for the domain of holomorphy $D\subset Y$}. Assume $f(\Omega\setminus K)\subset D$. For every fixed $a\in\Omega$ and for $L\in\Gamma_2(X)$ as above, we have $f(\Omega_L\setminus K_L)\subset D$, and so $\tilde f(a)\in\tilde f(\Omega_L)\subset D$, by Theorem~\ref{t.inertia}. We thus conclude that $\tilde f(\Omega)\subset D$.
\end{proof}

For some inclusions $K\subset\Omega\subset X$ Theorem~\ref{t.outer} applies, while Corollary~\ref{c.outer} fails:

\begin{example}[outer extension]\label{ex.Y-Hartogs}
Assume $\dim_\mathbb{C}(X)\ge3$.
\begin{description}
\item[(a)] For $e\in X\setminus\{0\}$ and $3$-compact subset $B\subset X$, let $K:=B+\mathbb{R}_+\!\cdot e$. Then every map $f\in\mathcal{H}_\mathrm{G}(X\setminus K,Y)$ has a unique extension $\tilde f\in\mathcal{H}_\mathrm{G}(X,Y)$.
\item[(b)] Assume $X$ is a normed space. Let a linear functional $\varphi:X\rightarrow\mathbb{C}$, an upper semicontinuous map $r:\mathbb{C}\rightarrow\mathbb{R}$ and $K:=\{x\in X\,|\,\|x\|\le r(\varphi(x))\}$. Then every map $f\in\mathcal{H}_\mathrm{G}(X\setminus K,Y)$ has a unique extension $\tilde f\in\mathcal{H}_\mathrm{G}(X,Y)$.
\item[(c)] Assume $X$ is an inner product space. Let two open balls $B_1,B_2\subset X$, such that $B_1\not\subset B_2$, $B_2\not\subset B_1$, and $B_1\cap B_2\ne\emptyset$. Set $\Omega:=\mathrm{co}(B_1\setminus\overline B_2)\subset X$. Then every map $f\in\mathcal{H}_\mathrm{G}(B_1\setminus\overline B_2,Y)$ has a unique extension $\tilde f\in\mathcal{H}_\mathrm{G}(\Omega,Y)$.
\item[(d)] Assume $X$ is an inner product space. Let $\varphi\in X^*\setminus\{0\}$ and two open balls $B\subsetneq\Omega\subset X$ centered at $0$. Then every map $f\in\mathcal{H}_\mathrm{G}\big(\Omega\setminus(\ker\varphi\setminus B),Y\big)$ has a unique extension $\tilde f\in\mathcal{H}_\mathrm{G}(\Omega,Y)$. On the other hand, for fixed $y\in Y\setminus\{0\}$, the map $\frac1\varphi\cdot y\in\mathcal{H}_\mathrm{G}(\Omega\setminus\ker\varphi,Y)$ has no extension from $\mathcal{H}_\mathrm{G}(\Omega,Y)$; this shows that the condition (ii) from Theorem~\ref{t.outer} cannot be dropped.
\end{description}
\end{example}

\noindent In each case we indicate how to use Theorem~\ref{t.outer} (Corollary~\ref{c.outer} does not apply).\\
(a). For $u\in X\setminus(\mathbb{C}\cdot e)$ and $c=-te$, with $t>0$ large enough.\\
(b). For $u\in\ker\varphi\setminus\{0\}$ and $c=te$, with $e\in\ker\varphi\setminus(\mathbb{C}\cdot u)$ and $t>0$ large enough.\\
(c). Let $a,b\in X$ denote the centers of the two balls. We may apply the theorem for $K:=\overline B_2\cap\Omega$ and $u\in\{b-a\}^\perp\setminus\{0\}$, and $c\in(B_1\setminus\overline B_2)\cap L_a(b-a)$.\\
(d). For $K:=\Omega\cap\ker\varphi\setminus B$ and $u\in(\ker\varphi)^\perp\setminus\{0\}$, and $c=0$.

As pointed out in the introduction, in Theorem~\ref{t.Hartogs} we may replace the compactness requirement on $K$ by significantly weaker assumptions.

\begin{corollary}[Hartogs extension]\label{c.Hartogs}
Let $n\ge2$, and the sets $K\subset\Omega\subset\mathbb{C}^n$, such that $\Omega$ and $\Omega\setminus K$ are open and $\Omega\setminus K$ is connected. Assume there exists $u\in\mathbb{C}^n\setminus\{0\}$, with the property that
\[K\cap L\mbox{ is compact, for every }L\in\Gamma_2(\mathbb{C}^n),\,L\parallel u.\]
Then every map $f\in\mathcal{H}(\Omega\setminus K)$ has a unique extension $\tilde f\in\mathcal{H}(\Omega)$. Furthermore, $\tilde f(\Omega)=f(\Omega\setminus K)$.
\end{corollary}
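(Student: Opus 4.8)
The plan is to obtain Corollary~\ref{c.Hartogs} as the finite-dimensional, scalar-valued special case of Corollary~\ref{c.outer}, applied with $X=\mathbb{C}^n$ and $Y=\mathbb{C}$. The only points that genuinely need attention are the translation between the classical objects $\mathcal{H}(\,\cdot\,)$ on open subsets of $\mathbb{C}^n$ and the G\^ateaux-holomorphic framework, and the verification that the ``compact cut'' hypothesis of Corollary~\ref{c.outer} follows from the weaker-looking hypothesis stated here.

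First I would record the standard identifications. Every open subset of $\mathbb{C}^n$ is $2$-open, and an open connected subset of $\mathbb{C}^n$ is polygonally connected; hence $\Omega$ and $\Omega\setminus K$ are $2$-open and $\Omega\setminus K$ is polygonally connected. Moreover, for any open $U\subset\mathbb{C}^n$ one has $\mathcal{H}_\mathrm{G}(U)=\mathcal{H}(U)$: the inclusion $\mathcal{H}(U)\subset\mathcal{H}_\mathrm{G}(U)$ holds by Definition~\ref{d.G-holomorphy}(b), while a map in $\mathcal{H}_\mathrm{G}(U)$ is holomorphic in each variable separately and hence, by Hartogs' theorem on separate analyticity, jointly holomorphic. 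This is the one spot where Theorem~\ref{t.regularity} cannot be invoked, since here $K$ is allowed to be neither $2$-closed nor $2$-bounded, so the passage from G\^ateaux holomorphy to genuine holomorphy must go through separate analyticity. In particular $\mathcal{H}(\Omega\setminus K)=\mathcal{H}_\mathrm{G}(\Omega\setminus K,\mathbb{C})$ and $\mathcal{H}(\Omega)=\mathcal{H}_\mathrm{G}(\Omega,\mathbb{C})$.

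Next I would check that for the given $u\in\mathbb{C}^n\setminus\{0\}$ the cut $K\cap L_a(u,v)$ is compact for all $a\in\Omega$ and $v\in\mathbb{C}^n$, which is exactly the hypothesis of Corollary~\ref{c.outer}. If $v\notin\mathbb{C}\cdot u$, then $L:=L_a(u,v)=a+\mathbb{C}\cdot u+\mathbb{C}\cdot v$ lies in $\Gamma_2(\mathbb{C}^n)$ and satisfies $L\parallel u$, so $K\cap L$ is compact by assumption. If $v\in\mathbb{C}\cdot u$, then $L_a(u,v)=L_a(u)$ is a complex line; since $n\ge2$ one may choose $L\in\Gamma_2(\mathbb{C}^n)$ with $L_a(u)\subset L$ and $L\parallel u$, and then $K\cap L_a(u)$ is a closed subset, relative to $L_a(u)$, of the compact set $K\cap L$, hence compact. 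Thus the hypotheses of Corollary~\ref{c.outer} are met (condition (ii) there is just the case $v=0$, already covered).

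Finally, Corollary~\ref{c.outer} gives that every $f\in\mathcal{H}_\mathrm{G}(\Omega\setminus K,\mathbb{C})$ has a unique extension $\tilde f\in\mathcal{H}_\mathrm{G}(\Omega,\mathbb{C})$, together with the additional conclusions, which for $Y=\mathbb{C}$ read $\tilde f(\Omega)=f(\Omega\setminus K)$. Reading this back through the identification $\mathcal{H}_\mathrm{G}(\,\cdot\,)=\mathcal{H}(\,\cdot\,)$ of the first step yields precisely the statement of Corollary~\ref{c.Hartogs}. I do not expect a real obstacle here; the only care needed is the bookkeeping for the degenerate direction $v\in\mathbb{C}\cdot u$ and the remark that the G\^ateaux-to-holomorphic step relies on Hartogs' separate-analyticity theorem rather than on Theorem~\ref{t.regularity}.
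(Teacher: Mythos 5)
Your proposal is correct and follows essentially the same route as the paper, whose proof is simply the observation that the statement is the case $X=\mathbb{C}^n$, $Y=\mathbb{C}$ of Corollary~\ref{c.outer}; your extra steps (the identification $\mathcal{H}_\mathrm{G}=\mathcal{H}$ on open subsets of $\mathbb{C}^n$, which the paper builds into its notation in Definition~\ref{d.G-holomorphy}(c), and the reduction of the degenerate case $v\in\mathbb{C}\cdot u$ to a $2$-plane cut) are exactly the routine verifications left implicit there. Only your parenthetical is off: Corollary~\ref{c.outer} has no condition (ii), and condition (ii) of Theorem~\ref{t.outer} (a line $L_c(u)$, $c\in\Omega$, missing $K$ entirely) is not the $v=0$ case of the cut hypothesis — but this is immaterial, since verifying (i) and (ii) of Theorem~\ref{t.outer} is the job of the proof of Corollary~\ref{c.outer}, which you invoke as given.
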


\begin{proof}
The conclusion is immediate, by Corollary~\ref{c.outer}.
\end{proof}

\begin{example}[Hartogs extension]\label{ex.Hartogs}
For every open set $\Omega\subset\mathbb{C}^3$, the subset
\[K:=\Omega\cap\{(z,z^2,0)\,|\,z\in\mathbb{C}\}\]
satisfies the condition from Corollary~\ref{c.Hartogs} with $u=(0,0,1)$, but $K$ may not be bounded or closed.
\begin{description}
\item[(a)] $K$ is bounded and not closed, and $K\cup(\mathbb{C}^n\setminus\Omega)$ is path-connected for $\Omega=B_{\mathbb{C}^3}(0,2)$.
\item[(b)] $K$ is unbounded and closed for $\Omega=\mathbb{C}^3$.
\item[(c)] $K$ is unbounded and not closed, and $K\cup(\mathbb{C}^n\setminus\Omega)$ is path-connected for $\Omega=\{(z_1,z_2,z_3)\in\mathbb{C}^3\,|\,\mathrm{Re}(z_1)>0\}$.
\end{description}
\end{example}

\bibliographystyle{amsplain}

\end{document}